\newtheorem{theorem}{Theorem}[section]
\newtheorem{proposition}[theorem]{Proposition}
\newtheorem{lemma}[theorem]{Lemma}
\newtheorem{corollary}[theorem]{Corollary}
\theoremstyle{definition}
\newtheorem{definition}[theorem]{Definition}
\theoremstyle{remark}
\let\Re\undefine
\DeclareMathOperator{\Re}{Re}
\DeclareMathOperator{\sign}{sgn}
\DeclareMathOperator{\tr}{tr}
\DeclareMathOperator{\diag}{diag}
\newcommand{\tp }{{\scriptscriptstyle\mathsf{T}}}
\newcommand{\arccosh}{\textrm{arccosh\,}}
\newcommand{\G}{\mathsf{G}}
\begin{document}
\title{Haagerup bound for quaternionic
Grothendieck inequality}
\author{Shmuel~Friedland}
\address{Department of Mathematics, Statistics and Computer Science,  University of Illinois, Chicago, IL,  60607-7045.}
\email{friedlan@uic.edu}
\author{Zehua~Lai}
\address{Computational and Applied Mathematics Initiative, Department of Statistics,
University of Chicago, Chicago, IL 60637-1514.}
\email{laizehua@uchicago.edu}
\author{Lek-Heng~Lim}
\address{Computational and Applied Mathematics Initiative, Department of Statistics,
University of Chicago, Chicago, IL 60637-1514.}
\email[corresponding author]{lekheng@galton.uchicago.edu}
\date{April 5, 2020}

\begin{abstract}
We present here several versions of the Grothendieck inequality over the skew field of quaternions: The first one is the standard  Grothendieck inequality for rectangular matrices, and two additional inequalities for self-adjoint matrices, as introduced by the first and the last authors in a recent paper.  We give several results on ``conic Grothendieck inequality'': as Nesterov $\pi/2$-Theorem, which corresponds to the cones of positive semidefinite matrices;  the  Goemans--Williamson inequality, which corresponds to the cones of weighted Laplacians; the diagonally dominant matrices. 
The most challenging technical part of this paper is the proof of the analog of Haagerup result that the inverse of the hypergeometric function $x\; \Hypergeometric{2}{1}{\frac{1}{2}, \frac{1}{2}}{3}{x^2}$
has first positive Taylor coefficient and all other Taylor coefficients are nonpositive.
\end{abstract}

\keywords{Grothendieck inequality for quaternions, symmetric Grothendieck inequality for quaternions, Goemans--Williamson inequality, Nesterov $\pi/2$-Theorem Grothendieck constants, Krivine-Haagerup bound}% tensor norm}

\subjclass[2010]{47A07, 46B28, 46B85, 81P40, 81P45, 03D15, 97K30, 47N10, 90C27}
\maketitle

\maketitle

\section{Introduction}\label{sec:intro}

We will let $\mathbb{F} = \mathbb{R}$ and $\mathbb{C}$ and $\mathbb{H}$  be the fields of real, complex and the skew field of quaternions  respectively in this article. In $1953$, Grothendieck proved a powerful result that he called ``the fundamental theorem in the metric theory of tensor products''  \cite{Grothendieck}. His result can be stated as follows \cite{Lindenstrauss}:  For $\mathbb{F}\in\{\mathbb{R},\mathbb{C}\}$ there exists a finite  constant $K > 0$ such that for every $l,m,n\in\mathbb{N}$ and every matrix $M=(M_{ij})\in\mathbb{F}^{m\times n}$, 
\begin{equation}\label{GI}
\max_{\lVert x_i\rVert = \lVert y_j \rVert = 1}\biggl| \sum_{i=1}^m\sum_{j=1}^n M_{ij} \langle x_i, y_j\rangle\biggr|\leq K \max_{\lvert \varepsilon_i \rvert = \lvert \delta_j \rvert = 1}\biggl|\sum_{i=1}^m\sum_{j=1}^n M_{ij}  \bar \varepsilon_i\delta_j\biggr|
\end{equation}
where the maximum on the left is take over all $x_i,y_j\in\mathbb{F}^{l}$ of unit $2$-norm, and the maximum on the right is taken over all  $\varepsilon_i, \delta_j \in \mathbb{F}$ of unit absolute value (i.e., $\varepsilon_i = \pm 1$,  $\delta_j=\pm1$ over $\mathbb{R}$; $\varepsilon_i = e^{i\theta_i}$, $\delta_j= e^{i \phi_j}$ over $\mathbb{C}$).  The inequality \eqref{GI} has since been christened \emph{Grothendieck's inequality} and the smallest possible constant $K$ \emph{Grothendieck's constant}. The value of Grothendieck's constant depends on the choice of $\mathbb{F}$ and we will denote it by $K_G^\mathbb{F}$.  In a recent paper \cite{FL20} two authors of this paper extended
the Grothendieck inequality to symmetric/Hermitian matrices, which we call symmetric Grothendieck inequality and referred as SGI.  Namely, in the above inequality we can assume that $M$ is symmetric/Hermitian and $x_i=y_i$.  
Furthermore, they considered more refined versions of SGI where the vectors $x_i$ are in $d$-dimensional Hilbert space as in \cite{Bri1}.

The aim of this paper to extend the Grothendieck's inequality and SGI to quaternions $\mathbb{H}$.  Since quaternions is a skew-field, which is noncommutative, there are a number of obstructions to overcome, to have the Haagerup type constant \cite{Haagerup2}.  We now describe briefly the results we obtained.  Let $\mathbb{F}=\mathbb{H}$.  We first show that the inequality \eqref{GI} holds, where $x_i,y_j$ in the quaternion Hilbert space $\mathbb{H}^l$, where $l\ge m+n$, and $\varepsilon_i,\delta_j\in \mathbb{H}$ with the constant $K_\G^\mathbb{H}$.  Using the analogous results to Krivine \cite{Krivine2} and Haagerup \cite{Haagerup2} we show that   $K_\G^\mathbb{H}\le 1.2168$.  This result is achieved by establishing the most difficult technical part of our paper.  Let $\Hypergeometric{2}{1}{a, b}{c}{x}$ be the classical hypergeometric function.  Denote $p_\ell(x)=x\; \Hypergeometric{2}{1}{\frac{1}{2}, \frac{1}{2}}{\ell}{x^2}$ for $\ell\in \mathbb{N}$.  It was shown by Haagerup that the inverse function $p^{-1}_\ell(x)$ has first positive Taylor coefficient, while all other Taylor coefficients are nonpositive for $\ell=2$.  In this paper we show Haagerup result for $\ell=3$.  Numerical computations show that the same result holds for at least  $\ell= 4,5,6$, for the first one hundred Taylor coefficients.

Denote by $\mathbb{S}^n(\mathbb{F})\subset \mathbb{F}^{n\times n}$ the real space of self-adjoint matrices. i.e., $A^*=A$.  We show that we have two analogs of the Grothendieck inequaity \eqref{GI} on $\mathbb{S}^n(\mathbb{H})$:
\begin{eqnarray}\label{Kgamineq}
&&\max _{\|x_i\|=1}\biggl|\Re \sum_{i=1}^n \sum_{j=1}^n a_{ij}\langle x_i, x_j \rangle\biggr|\le K_\gamma^\mathbb{H}
 \max _{|\delta_i|=1}\biggl| \Re\sum_{i=1}^n \sum_{j=1}^n  a_{ij}\bar \delta_i\delta_j  \biggr|,\\
&&\max _{\|x_i\|\le1}\biggl|\Re \sum_{i=1}^n \sum_{j=1}^n a_{ij}\langle x_i, x_j \rangle\biggr|\le K_\Gamma^\mathbb{H}
 \max _{|\delta_i|\le1}\biggl| \Re\sum_{i=1}^n \sum_{j=1}^n  a_{ij}\bar \delta_i\delta_j  \biggr|.
 \notag
\end{eqnarray}
Furthermore $K_\G^\mathbb{H}\le K_\Gamma^\mathbb{H}\le  K_\gamma^\mathbb{H}\le \frac{64}{9\pi}-1\approx 1.263537$.

We now describe briefly  the ``conic Grothendieck inequality'' for various cones in $\mathbb{S}^n(\mathbb{H})$.  Denote by $\mathbb{S}^n_+(\mathbb{H})$ the cone of positive semdefinite self-adjoint quaternionic matrices.  We show that in this case \eqref{GI} is equivalent to the inequality of the form \eqref{Kgamineq} with the constant  $32/9\pi$, which is sharp.   This is a quaternionic version of Nesterov-Rietz $\pi/2$ theorem \cite{Nes,Rietz} for the real numbers, and Nemirovski-Roos-Terlaky $4/\pi$ theorem for the complex numbers \cite{NRT99}.  We next consider the subcone of $\mathbb{S}^n_+(\mathbb{R})$ of Laplacian matrices.  In this case the constant in \eqref{GI} can be reduced to $K\leq 1.0338$.  This is the quaternion version of the celebrated Goemans-Williamson inequality \cite{GW95}.

\section{Quaternions}\label{sec:quat}
\subsection{Basic facts on quaternions}\label{subsec:quat}
Recall that $\mathbb{H}$ can be viewed as $\mathbb{R}^4$.  So $a\in\mathbb{H}$ is of the form $a=a_0+a_1\mathrm{i}+a_2\mathrm{j}+a_3\mathrm{k}$.  We can identify $a$ with $a=(a_0,a_1,a_2,a_3)^{\top}$. We define the real part of $a$ to be $\Re a:=a_0$, and the conjugate of $a$ to be $ \bar a=a^*=a_0-a_1\mathrm{i}-a_2\mathrm{j}-a_3\mathrm{k}$.
The product table of
$\mathrm{i},\mathrm{j},\mathrm{k}$ is given by 
\[\mathrm{i}^2=\mathrm{j}^2=\mathrm{k}^2=-1,\;
\mathrm{i}\mathrm{j}=-\mathrm{j}\mathrm{i}=\mathrm{k},\; \mathrm{j}\mathrm{k}=-\mathrm{k}\mathrm{j}=\mathrm{i},\;
\mathrm{k}\mathrm{i}=-\mathrm{i}\mathrm{k}=\mathrm{j}.\]
Hence $\mathbb{H}$ is a noncommutative ring over $\mathbb{R}$.  Observe next that
$a \bar a=\bar a a=a_0^2+a_1^2+a_2^2+a_3^2$.  Hence $|a|=\sqrt{a \bar a}\ge 0$ and equality holds if and only $a=0$.  Thus for $a\ne 0$ the element $|a|^{-1} \bar a=\bar a|a|^{-1}$ is the unique inverse of $a$ in $\mathbb{H}$.  So $\mathbb{H}$ is a skew field over the field $\mathbb{R}$, where $1$ is the identity element.  Frobenius's theorem claims that the only skew fields over $\mathbb{R}$ are $\mathbb{R},\mathbb{C},\mathbb{H}$.
For $A\in \mathbb{F}^{m\times n}$ we denote $A^*:=\bar A^\tp \in \mathbb{F}^{n\times m}$.

There is standard way to to present quaternions similar to the complex numbers:
$z+w\mathrm{j}$, where $z,w\in \mathbb{C}$.  Indeed, if $z=x+y\mathrm{i}, w=u+v\mathrm{i}$, the the identity $\mathrm{i}\mathrm{j}=\mathrm{k}$ yields
$z+w\mathrm{j}=x+y\mathrm{i}+u\mathrm{j}+v\mathrm{k}$.  Thus to multiply quaternions we have to remember that the product of complex numbers is commutative and
\begin{equation}\label{quaterrues}
\overline {z+w\mathrm{j}}=\bar z-w\mathrm{j}, \quad w\mathrm{j}=\mathrm{j}\bar w.
\end{equation}

There is another representation of $\mathbb{H}$ as a real subalgebra of $2\times 2$ complex valued matrices $\mathbb{C}^{2\times 2}$.  First observe that one can view $a$ as $a=(z,w)\in\mathbb{C}^2$.  Note that $\bar a=(\bar z, -w)$.  (Warning: if one views $(z,w)$ as a vector with complex entires then $\overline{(z,w)}=(\bar z, \bar w)$.)  Let 
\begin{eqnarray}\label{Carep}
C(a)=\left(\begin{array}{cc}z&w\\-\bar w&\bar z\end{array}\right)\in \mathbb{C}^{2\times 2}, \quad a=(z,w).
\end{eqnarray}
Then the map $a\to C(a)$ is an isomorphism of $\mathbb{H}$ and the induced complex $2$-dimensional subalgebra $\mathcal{C}(\mathbb{H})=\{C(a), \;a\in\mathbb{H}\}\subset\mathbb{C}^{2\times 2}$.   Note that ${A}(\mathbb{H})\cap \mathbb{R}^{2\times 2}$ is subalgebra isomorphic to $\mathbb{C}$.
Observe that 
\begin{equation}\label{tracequat}
|a|^2=\det C(a), \quad C(\bar a)=C(a)^*, \Re(a)=\frac{1}{2}\tr(C(a)).
\end{equation} 
As $\tr(AB)=\tr(BA)$ we deduce that
\begin{equation}\label{tracecomrel}
\Re (ab)=\Re( ba)=\Re(\overline{ab})=\Re (\bar b \bar a)=\Re(\bar a \bar b),\,a,b\in\mathbb{H}.
\end{equation}
\subsection{Vector spaces}\label{subsec:vecsp}
We next consider a right vector space $\mathbb{V}$ over $\mathbb{H}$.  It is a commutative group with $0$  element denoted as $0$. We will denote in this section by the lower case bold letter vectors in $\mathbb{V}$. For the right vector space $\mathbb{V}$ the scalar vector product $va$ satisfies the standard assumptions:
\[(v+w)a=va+wa, \; v(a+b)=va+wb,\; v(ab)=(va)b,\; v1=v.\]
We can define similarly the left vector space over $\mathbb{H}$. In this paper, we only work with right vector space.
Linear dependence, linear independence, subspace, span of a set of vectors, finitely generated subspaces, basis are defined as for the vector spaces over a field.  Every finitely generated vector space over $\mathbb{H}$ has a basis of the same cardinality, which is denoted by $\textrm{dim }\mathbb{V}$.  Denote $[l]=\{1,\dots, l\}\subset \mathbb{N}$.  We view
\begin{eqnarray*}
\mathbb{H}^l=\{x=(x_1,\dots,x_l)^\tp, \;x_i\in\mathbb{H}, i\in[l]\}, \quad 
\mathbb{H}_l=\{x=(x_1,\dots,x_l), \;x_i\in\mathbb{H}, i\in[l]\},
\end{eqnarray*}
as right and left vector spaces over $\mathbb{H}$ respectively. 
Clearly, dim$\,\mathbb{H}^l=\textrm{dim}\,\mathbb{H}_l=l$ and $e_i=(\delta_{1i},\dots,\delta_{li}), i\in[l]$ is the standard basis in $\mathbb{H}^l$ and $\mathbb{H}_l$.

When a basis is specified, for example, the standard basis in the right vector space $\mathbb{H}^l$, the expression $av$ is meaningful and we will use it when necessary. However, the reader should keep in mind this expression should be treated as an additional structure related to a particular basis. Its meaning will be different if we take a different basis.  

Denote 
\begin{equation}\label{defbarstarvector}
\overline{x}=(\bar x_1,\dots,\bar x_l)^\tp, \, x^*=(\bar x_1,\dots,\bar x_l), \quad \textrm{for } x=(x_1,\dots,x_l)^\tp\in\mathbb{H}^l.
\end{equation}
Similar notations apply for $x\in\mathbb{H}_l$.
Note that $\overline{x}$ is defined with respect to the standard basis in $\mathbb{H}_l$.

Let $M=(M_{ij})\in\mathbb{H}^{m\times n}$. Define $C(M)=(C(M_{ij}))\in \mathbb{C}^{(2m)\times (2n)}$ to be the block matrix with $2\times 2$ blocks $C(M_{ij})$. Again, this embedding commutes with conjugate transpose, addition and multplication of matrices. For $m=n$ the matrix $M\in \mathbb{H}^{n\times n}$ is called (quaternion) self-adjoint if $M^*=M$.  We denote by $\mathbb{S}^n(\mathbb{F})\subset \mathbb{F}^{n\times n}$ the real space of self-adjoint matrices: $M^*=M$.  When no ambiguity arises we will drop the dependence on $\mathbb{F}$.

It is helpful to introduce a convenient relabeling of the block matrix $C(M)$ denoted as $\hat C(M)=P_mC(M)P_{n}^\tp$, where $P_m\in \{0,1\}^{(2m)\times (2m)}$ is the following permutation matrix: The matrix $P_m$  permutes the rows $1,2,\dots, m+1,\dots,2m$ to $1,3,\dots,2m-1,2,4,\dots,2m$ respectively.  Then $\hat C(M)$ has the following block structure:
\begin{eqnarray}\label{hatCMrep}
\hat C(M)=\left(\begin{array}{cc} Z& W\\-\overline{W}&\bar Z\end{array}\right), \quad Z,W.\in \mathbb{C}^{m\times n}
\end{eqnarray}
Clearly, this partition is another isomorphism $\iota:\mathbb{H}^{m\times n}\to \mathbb{C}^{(2m)\times (2n)}$ which is preserved under multiplication and conjugate transpose of matrices.  Note $M\in \mathbb{S}^n(H)$ if in the above representation of $\hat C(M)$, where $Z\in \mathbb{S}^n(\mathbb{C})$ and $W$ is skew symmetric $W^\tp=-W$.
\subsection{Inner product on quaternion vector space}
Assume that $\mathbb{V}$ is a right vector space over $\mathbb{H}$.  
A mapping $\langle\cdot,\cdot\rangle: \mathbb{V}\times\mathbb{V}\to \mathbb{H}$ is called an inner product if the following conditions hold:
\begin{eqnarray*}
\langle y,x\rangle=\overline{\langle x,y\rangle},\\
\langle xa+yb,z\rangle=\bar a\langle x,z\rangle
+\bar b\langle y,z\rangle,\\
\langle z,xa+yb\rangle=\langle z,x\rangle a+
\langle z,y\rangle b,\\
\langle x,x\rangle >0 \textrm{ for } x\ne 0.
\end{eqnarray*}
The norm is defined as $\|x\|=\sqrt{\langle x,x\rangle}$. Let $\mathcal{H}$ be a right vector space of $\mathbb{H}$ with an inner product. We also call $\mathcal{H}$ the Hilbert space over quaternions. All analysis in this paper is essentially finite dimensional. Whether or not we incooperate the completeness in our definition does not change our result.
\begin{lemma}
\begin{enumerate}
\item $\|xa\|=|a|\|x\|$.
\item The Cauchy-Schwarz inequality holds for quaternion vector space,
\[
|\langle x,y\rangle| \leq \|x\| \|y\|.
\]
\item $\|\cdot\|$ is subadditive, i.e., satisfies the triangle inequality.  Hence $\|\cdot\|$ is indeed a norm on $\mathbb{V}$.
\end{enumerate}
\end{lemma}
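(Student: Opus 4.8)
The plan is to prove the three parts in sequence, exploiting throughout the single structural fact that $\langle z,z\rangle=\|z\|^2$ is a nonnegative \emph{real} number and hence lies in the center of $\mathbb{H}$, so that it commutes with every quaternion.

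\emph{Part (1).} Using conjugate-linearity in the first slot and linearity in the second, $\|xa\|^2=\langle xa,xa\rangle=\bar a\,\langle x,x\rangle\,a$; since $\langle x,x\rangle=\|x\|^2\in\mathbb{R}$ with $\|x\|^2\ge 0$ is central, this equals $\|x\|^2\,\bar a a=\|x\|^2|a|^2$, and taking square roots gives $\|xa\|=|a|\,\|x\|$.

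\emph{Part (2).} I would adapt the classical ``discriminant'' argument, taking care with the side on which scalars act. For an arbitrary $a\in\mathbb{H}$, expanding $0\le\langle x-ya,\,x-ya\rangle$ by the two sesquilinearity rules and using $\langle y,x\rangle=\overline{\langle x,y\rangle}$, the centrality of $\langle y,y\rangle$, and the identity $\langle x,y\rangle a+\overline{\langle x,y\rangle a}=2\Re(\langle x,y\rangle a)$ coming from \eqref{tracecomrel}, one obtains
\[
0\le \|x\|^2-2\Re\bigl(\langle x,y\rangle a\bigr)+\|y\|^2|a|^2 \qquad\text{for all }a\in\mathbb{H}.
\]
If $\langle x,y\rangle=0$ there is nothing to prove; otherwise choose $a=t\,\overline{\langle x,y\rangle}/|\langle x,y\rangle|$ with $t\ge 0$ real, so that $\langle x,y\rangle a=t\,|\langle x,y\rangle|$ is a nonnegative real and $|a|=t$. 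The inequality then becomes the real quadratic $0\le\|x\|^2-2t\,|\langle x,y\rangle|+t^2\|y\|^2$; when $y\ne 0$, evaluating at $t=|\langle x,y\rangle|/\|y\|^2$ yields $|\langle x,y\rangle|^2\le\|x\|^2\|y\|^2$, while when $y=0$ the claim is immediate.

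\emph{Part (3).} Expanding $\|x+y\|^2=\|x\|^2+2\Re\langle x,y\rangle+\|y\|^2$ and then bounding $\Re\langle x,y\rangle\le|\langle x,y\rangle|\le\|x\|\,\|y\|$ via part (2) gives $\|x+y\|^2\le(\|x\|+\|y\|)^2$; taking square roots proves subadditivity. Together with positivity of $\|\cdot\|$ (from the last inner-product axiom) and the scaling law of part (1), this shows $\|\cdot\|$ is a norm. The only genuine subtlety — the main obstacle — is the bookkeeping of left versus right scalar multiplication, and in particular arranging in part (2) that $a$ be chosen so that $\langle x,y\rangle a$ is a nonnegative real; once that normalization is in place, everything collapses to the familiar real-variable computation.
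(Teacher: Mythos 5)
Your proposal is correct and follows essentially the same route as the paper: part (1) is identical, part (2) is the same quaternionic discriminant argument with the same normalization $a=t\mu$ forcing $\langle x,y\rangle a$ to be a nonnegative real (you merely evaluate the quadratic at its minimizer instead of invoking positivity for all $t$, and you handle the degenerate cases slightly more explicitly), and part (3) is the same expansion of $\|x+y\|^2$.
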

\begin{proof}
\begin{enumerate}
\item $\|xa\|^2=\langle xa,xa\rangle = a^* \langle x,x\rangle a =  \|x\|^2 |a|^2$.
\item Suppose that $x$ is not a scalar multiple of $y$, and that neither $x$ nor $y$ is 0. Then $x-ya$ is not 0 for any $a$. So
\[
\|x-ya\|^2 = \|x\|^2+\|y\|^2|a|^2-\langle x,y\rangle a - a^* \langle x,y\rangle > 0
\]
Let $a = t\mu$ with real $t$ and $|\mu| = 1$ so that $\langle x,y\rangle a = |\langle x,y\rangle |t$. Then
\[
\|x\|^2+\|y\|^2t^2-2|\langle x,y\rangle |t> 0
\]
holds for all $t$. So $|\langle x,y\rangle| \leq \|x\| \|y\|.$
\item $ (\|x\|+\|y\|)^2 - \|x+y\|^2 = 2\|x\| \|y\|-\langle x,y\rangle-\langle y,x\rangle \geq 0. $
\end{enumerate}
\end{proof}

Two vectors $x,y$ are called orthogonal if $\langle x,y\rangle =0$.  A set of vectors $x_1,\dots,x_l\in \mathbb{V}$ is an orthonormal system if $\langle x_i,x_j\rangle=\delta_{ij}$ for $i,j\in[l]$.
\begin{lemma}\label{GSP} (Gram-Schmidt process)  Let $x_1,\dots,x_n$ be vectors in a right inner product space $\mathbb{V}$ over $\mathbb{H}$.
Assume that $x_1\ne 0$.  Then there exists $m\in[n]$ orthonormal vectors $y_1,\dots,y_m$ in the span of $x_1,\dots,x_n$ with the following property.  For each $i\in[n]$ there exists $j(i)\in[i]$ such that $x_1,\dots,x_i$ are in the span of $y_1,\dots,y_{j(i)}$.
The vectors $y_1,\dots,y_m$ are obtained by the Gram-Schmidt process.
\end{lemma}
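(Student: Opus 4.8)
The plan is to run the usual Gram--Schmidt recursion, taking care at each step to keep all scalars \emph{to the right} of vectors (as is forced in a right vector space) and to use the conjugate-linearity of $\langle\cdot,\cdot\rangle$ in the first slot and its linearity in the second slot. The base case is immediate: since $x_1\ne 0$ we have $\|x_1\|\in\mathbb{R}_{>0}$, hence it is central and self-conjugate, and $y_1:=x_1\|x_1\|^{-1}$ satisfies $\langle y_1,y_1\rangle=\|x_1\|^{-1}\langle x_1,x_1\rangle\|x_1\|^{-1}=1$; set $j(1)=1$.

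For the inductive step, suppose for some $i\in\{2,\dots,n\}$ we have produced an orthonormal system $y_1,\dots,y_k$ lying in the span of $x_1,\dots,x_{i-1}$, together with indices $j(1)\le\cdots\le j(i-1)=k$ with $j(t)\le t$, such that $x_1,\dots,x_{i-1}$ lie in the span of $y_1,\dots,y_k$. Form the residual $z_i:=x_i-\sum_{t=1}^k y_t\langle y_t,x_i\rangle$. Using $\langle y_s,y_t\rangle=\delta_{st}$ and linearity in the second slot, $\langle y_s,z_i\rangle=\langle y_s,x_i\rangle-\langle y_s,x_i\rangle=0$ for all $s\in[k]$, so $z_i$ is orthogonal to $y_1,\dots,y_k$. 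If $z_i=0$, then $x_i=\sum_{t=1}^k y_t\langle y_t,x_i\rangle$ lies in the span of $y_1,\dots,y_k$; set $j(i)=k$ and leave the system unchanged. If $z_i\ne 0$, set $y_{k+1}:=z_i\|z_i\|^{-1}$; since $\|z_i\|$ is real we get $\langle y_{k+1},y_{k+1}\rangle=1$ and $\langle y_s,y_{k+1}\rangle=\langle y_s,z_i\rangle\|z_i\|^{-1}=0$ for $s\le k$, so $y_1,\dots,y_{k+1}$ is orthonormal; moreover $x_i=y_{k+1}\|z_i\|+\sum_{t=1}^k y_t\langle y_t,x_i\rangle$ lies in the span of $y_1,\dots,y_{k+1}$, and since $z_i$ is a right-$\mathbb{H}$-combination of $x_i$ and $y_1,\dots,y_k$, the inductive hypothesis places $y_{k+1}$ in the span of $x_1,\dots,x_i$; set $j(i)=k+1\le i$. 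After processing $x_n$, put $m:=j(n)\in[n]$: the accumulated $y_1,\dots,y_m$ form an orthonormal system contained in the span of $x_1,\dots,x_n$, and by construction $x_1,\dots,x_i$ lie in the span of $y_1,\dots,y_{j(i)}$ with $j(i)\le i$, as claimed.

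I do not expect a genuine obstacle here; the only points requiring care are the ones created by noncommutativity. Scalars must sit on the right of vectors throughout; the coefficient of $x_i$ along $y_t$ is $\langle y_t,x_i\rangle$ (not $\langle x_i,y_t\rangle$) placed on the right, as the orthogonality computation above dictates; and the normalization steps are harmless precisely because norms are real, hence central and self-conjugate, and because orthogonality is preserved under right scalar multiplication, $\langle y_s, z_i\|z_i\|^{-1}\rangle=\langle y_s,z_i\rangle\|z_i\|^{-1}$. These are exactly the facts already recorded for quaternionic inner product spaces in the preceding lemma.
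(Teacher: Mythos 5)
Your proof is correct and follows essentially the same route as the paper's: the standard Gram--Schmidt recursion with residual $z_i=x_i-\sum_t y_t\langle y_t,x_i\rangle$, normalization by the (real, hence central) norm, and a case split on whether the residual vanishes. You simply spell out the orthogonality computation and the right-scalar bookkeeping in more detail than the paper does.
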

\begin{proof}  Let $y_1=x_1\|x_1\|^{-1}$.  Suppose we defined the orthonormal vectors $y_1,\dots,y_j$ such that their span, denoted as $\mathbb{V}_j$,
contains the vectors $x_1,\dots,x_i$.  So $j(i)=j$.  Let
\[z_{i+1}=x_{i+1}-\sum_{k=1}^j  y_k \langle y_k,x_{i+1}\rangle.\]
Assume first that $z_{i+1}\ne 0$.  A straightforward calculation shows that  $z_{i+1}$ is orthogonal on $y_k$ for $k\in [j]$.
Then let $y_{j+1}=z_{i+1} \|z_{i+1}\|^{-1}$.
Assume second that $z_{i+1}=0$.  Then $j(i+1)=j$ and  we replace $x_{i+1}$ by $x_{i+2}$.
\end{proof}

\subsection{Tensor products over quaternions}\label{subsec:tneprodquat}
There is no natural way to define the tensor product space over  $\mathbb{H}$. So the definition below is coordinate dependent and should not be confused with the universal construction often used in other settings. 

Given quaternion vector spaces $\mathbb{H}^m, \mathbb{H}^n$ with standard basis, we define the tensor product $\mathbb{H}^m\otimes \mathbb{H}^n$ as the space of $\mathbb{H}^{m\times n}$ matrices and $u\otimes v$ can be identified with $uv^\tp$. On matrices $\mathbb{H}^m\otimes \mathbb{H}^n$ we define the inner product as:
\[\langle A,B \rangle=\textrm{ trace } A^*B=\sum_{i,j=1}^{m,n} A_{ij}^*B_{ij}, \; A=(A_{ij}),B=(B_{ij})\in\mathbb{H}^m\otimes \mathbb{H}^n.\]
  
Therefore 
\[
\langle u\otimes v,x\otimes y\rangle=
\sum_{i,j=1}^{m,n}v_j^* u_i^* x_i y_j.
\]
Notice that 
\[
\langle u, x\rangle \langle v, y\rangle=\sum_{i,j=1}^{m,n}u_i^* x_i v_j^*y_j.
\]
In general, the two quantities are not the same due to noncommutativity.

\begin{lemma}\label{teninprodform}
Let $u, x\in \mathbb{H}^m, v, y\in \mathbb{H}^n$.
\begin{enumerate}
\item If $\langle u,x\rangle\in \mathbb{R}$ then $\langle u\otimes v,x\otimes y\rangle=
\langle u,x\rangle \langle v,y\rangle$.
\item  $\|u\otimes v\|=\|u\|\|v\|$.
\item
$\Re(\langle \overline{v}\otimes v,\overline{y}\otimes y\rangle)=| \langle v,y\rangle|^2.$
 \end{enumerate}
\end{lemma}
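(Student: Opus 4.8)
The plan is to derive all three parts by expanding the inner product on tensors through the formula $\langle u\otimes v,x\otimes y\rangle=\sum_{i,j}v_j^*u_i^*x_iy_j$ recorded just above, and then invoking two structural facts about $\mathbb{H}$: real numbers are central in $\mathbb{H}$, and $\Re(ab)=\Re(ba)$ by \eqref{tracecomrel}.

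For (1), I would first carry out the summation over the index shared by $u$ and $x$, rewriting the left-hand side as $\sum_j v_j^*\,\langle u,x\rangle\,y_j$. Since by hypothesis $\langle u,x\rangle$ is real, hence central in $\mathbb{H}$, it may be pulled out of the sum, giving $\langle u,x\rangle\sum_j v_j^*y_j=\langle u,x\rangle\langle v,y\rangle$. Part (2) then follows immediately from (1) applied with $x=u$, $y=v$: because $\langle u,u\rangle=\|u\|^2$ is real, $\|u\otimes v\|^2=\langle u\otimes v,u\otimes v\rangle=\langle u,u\rangle\langle v,v\rangle=\|u\|^2\|v\|^2$, and one takes square roots. (Equivalently, expand directly and use $u_i^*u_i=|u_i|^2\in\mathbb{R}$.)

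Part (3) is where the actual work is. In the commutative (complex) setting one would simply write $\langle\overline v\otimes v,\overline y\otimes y\rangle=\langle v,y\rangle\,\overline{\langle v,y\rangle}$ and stop, but noncommutativity forbids reading off the value before taking the real part. The approach I would take: expand $\langle\overline v\otimes v,\overline y\otimes y\rangle$ by the displayed formula and collect the summation over the common index, producing an expression in which $\langle v,y\rangle$ sits sandwiched between two quaternionic factors, $\sum_j a_j\,\langle v,y\rangle\,b_j$, the factors $a_j,b_j$ being $v_j$ and $\overline y_j$ in the order forced by how $\overline v$ is read off in the standard basis. This quaternion is not real in general, but its real part is: apply $\Re(xy)=\Re(yx)$ to each summand to move $a_j$ and $b_j$ onto the same side of $\langle v,y\rangle$, and then resum, recognizing the leftover sum $\sum_j b_j a_j$ as the conjugate $\overline{\langle v,y\rangle}$ of the sum defining $\langle v,y\rangle$. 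This collapses the whole expression to $\Re\bigl(\langle v,y\rangle\,\overline{\langle v,y\rangle}\bigr)=|\langle v,y\rangle|^2$, the last equality because $\langle v,y\rangle\,\overline{\langle v,y\rangle}$ is already a nonnegative real number.

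The main obstacle is confined to part (3): keeping the noncommutative order of factors straight when the summations are pushed past the middle inner product and when the cyclic identity is applied, so that the residual sum collapses to exactly $\overline{\langle v,y\rangle}$ and not to some other reordering. Beyond that, everything uses only the centrality of $\mathbb{R}$ in $\mathbb{H}$ and \eqref{tracecomrel}.
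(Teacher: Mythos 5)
Parts (1) and (2) of your proposal coincide with the paper's proof and are fine: collect the inner sum, use that $\langle u,x\rangle$ is real and hence central in $\mathbb{H}$, and specialize to $x=u$, $y=v$ to get the norm identity.

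The gap is exactly at the point you yourself flag in part (3), and it is fatal to the argument as you describe it. Expanding with the displayed formula gives $\langle\overline v\otimes v,\overline y\otimes y\rangle=\sum_{i,j}\bar v_j\,v_i\bar y_i\,y_j$, so the factor that gets sandwiched after summing over the common index is $\sum_i v_i\bar y_i=\langle\overline v,\overline y\rangle$, \emph{not} $\langle v,y\rangle=\sum_i\bar v_iy_i$; likewise the residual outer sum is $\sum_jy_j\bar v_j$, which is the quaternionic conjugate of $\sum_iv_i\bar y_i$ rather than $\overline{\langle v,y\rangle}$. The cyclic identity $\Re(ab)=\Re(ba)$ then correctly yields $\Re(a\bar a)=|a|^2$ with $a=\langle\overline v,\overline y\rangle$ --- but $|\langle\overline v,\overline y\rangle|$ and $|\langle v,y\rangle|$ genuinely differ in general because of noncommutativity. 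Concretely, for $v=(1+\mathrm{i},\mathrm{j})^\tp$ and $y=(\mathrm{k},1)^\tp$ one finds $\langle v,y\rangle=\mathrm{k}$, so $|\langle v,y\rangle|^2=1$, while a direct term-by-term evaluation gives $\Re\langle\overline v\otimes v,\overline y\otimes y\rangle=5=|2\mathrm{j}-\mathrm{k}|^2=|\langle\overline v,\overline y\rangle|^2$. So the chain of identifications you propose cannot be completed; what your computation actually proves is $\Re\langle\overline v\otimes v,\overline y\otimes y\rangle=|\langle\overline v,\overline y\rangle|^2$. For what it is worth, the paper's own displayed proof of (3) makes the identical final identification, so the slip is inherited from the source; the identity in the form needed later (for the Schur product lemma) is recovered by placing the conjugations on the \emph{second} tensor factors, since the same cyclic argument gives $\Re\langle v\otimes\overline v,y\otimes\overline y\rangle=\Re\bigl(\langle v,y\rangle\,\overline{\langle v,y\rangle}\bigr)=|\langle v,y\rangle|^2$.
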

\begin{proof}
\emph{(1)} If $\langle u,x\rangle\in \mathbb{R}$, then
\[
\sum_{i,j=1}^{m,n}v_j^* u_i^* x_i y_j = \sum_{j=1}^{n}v_j^*(\sum_{i=1}^{m} u_i^* x_i) y_j = \sum_{i,j=1}^{m,n}u_i^* x_i v_j^*y_j.
\]
\noindent
\emph{(2)} As $\langle u,u\rangle\ge 0$ it follows that$\langle u\otimes v,u\otimes v\rangle=
\langle u,u\rangle \langle v,v\rangle$.  Hence  $\|u\otimes v\|=\|u\|\|v\|$.

\noindent
\emph{(3)}
\[
\Re(\langle \overline{v}\otimes v,\overline{y}\otimes y\rangle)=\Re(\sum_{i,j=1}^{n,n}v_j^* v_i y_i^* y_j)=\Re(\sum_{i=1}^{n} v_i y_i^* \sum_{i=1}^{n}y_i v_i^*)=| \langle v,y\rangle|^2.
\]
\end{proof}

\subsection{Schur's theorem for quaternions}\label{subsec:Schurquat}
Recall $\mathbb{S}^n(\mathbb{F})\subset \mathbb{F}^{n\times n}$ is the space of $A$ satisfying $A^*=A$. We call such matrices self-adjoint. Assume that $A=(a_{ij})\in \mathbb{S}^n(\mathbb{H})$. We associate with $A$ the quaternion form $Q(x):=x^* Ax=\sum_{i=1}^n\sum_{j=1}^n\bar x_i a_{ij} x_j$ for $x\in \mathbb{H}^n$.  As $(x^* A x)^*=x^* Ax$ it follows that $Q(x)$ is always a real number.  The matrix $A$ is called positive semidefinite if $Q(x)\ge 0$ for all $x$.  
We denote by $\mathbb{S}^n_+(\mathbb{F})$ the cone of positive semidefinite self-adjoint matrices over $\mathbb{F}$. It is easy to check that $\mathbb{S}^n_+(\mathbb{H}) \cap \mathbb{R}^{n\times n} = \mathbb{S}^n_+(\mathbb{R})$. If $Q(x)> 0$ for all $x\neq 0$, $\langle x,y \rangle = x^* A y$ defines an inner product in $\mathbb{H}^n$.

Denote by $\mathbb{U}^n(\mathbb{F})\subset \mathbb{F}^{n\times n}$ the group of unitary matrices $U^* U=U U^*=I$.  The spectral theorem of $A\in  \mathbb{S}^n(\mathbb{F})$ claims that there exists a unitary $U$ and a real diagonal $D$ such that $A=U D U^*$ \cite{Farenick}.  The columns of $U$ are the eigenvectors of $A$ with real left eigenvalues, which are the corresponding diagonal entries of $D$.  Thus $A\in \mathbb{S}^n_+(\mathbb{F})$ if and only if all the left real eigenvalues of $A$ are nonnegative.  In that case $A$ has a unique square root $A^{1/2}=UD^{1/2}U^*\in \mathbb{S}^n_+(\mathbb{F})$.  Hence $A=\langle x_i,x_j\rangle$, where $x_1,\dots,x_n$ are the columns of $A^{1/2}$.  In particular, $\|x_1\|=\dots=\|x_n\|=1$ if and only if the diagonal entires of $A$ are $1$. To get the expression $A=\langle x_i,x_j\rangle$, we can also use the Cholesky decomposition. The usual algorithm for Cholesky decomposition works for quaternions.

\begin{lemma}\label{sapdquatcon}  Assume that $M\in\mathbb{H}^{n\times n}$ has representation $\hat C(M)\in \mathbb{C}^{(2n)\times (2n)}$ given by \eqref{hatCMrep}.
Then
\begin{enumerate}
\item $M\in \mathbb{S}^n(\mathbb{H})$ if and only if $Z\in\mathbb{S}^n (\mathbb{C})$ and $W$ is skew symmetric: $W^\tp=-W$.
\item $M\in \mathbb{S}^n(\mathbb{H})$ if and only if $\bar M\in \mathbb{S}^n(\mathbb{H})$.  Furthermore $M\in \mathbb{S}^n_+(\mathbb{H})$ if and only if $\bar M\in \mathbb{S}^n_+(\mathbb{H})$. 
\item $M\in \mathbb{S}_+(\mathbb{H})$ if and only if $\hat C(M)\in \mathbb{S}^{2n}_+(\mathbb{C})$.
\end{enumerate}
\end{lemma}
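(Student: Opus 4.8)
The plan is to derive all three parts from the $*$-isomorphism $\iota\colon\mathbb H^{n\times n}\to\mathbb C^{(2n)\times(2n)}$, $M\mapsto\hat C(M)$, together with its explicit block form \eqref{hatCMrep}. For (1), since $\iota$ intertwines conjugate transpose, $M^*=M$ is equivalent to $\hat C(M)^*=\hat C(M)$; writing $\hat C(M)=\bigl(\begin{smallmatrix}Z&W\\-\bar W&\bar Z\end{smallmatrix}\bigr)$ and taking the complex conjugate transpose block by block gives $\hat C(M)^*=\bigl(\begin{smallmatrix}Z^*&-W^{\tp}\\ \bar W^{\tp}&Z^{\tp}\end{smallmatrix}\bigr)$, so equating blocks forces exactly $Z=Z^*$ and $W=-W^{\tp}$ (the other two block identities are the complex conjugates of these, hence automatic), and the converse is the same computation read backwards. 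This just re-derives the remark preceding the lemma.

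For (3) I would set up an explicit intertwiner. Identify $\mathbb H^n$ with $\mathbb C^{2n}$ by the bijection $\phi$ sending $x=(z_p+w_p\mathrm j)_{p=1}^n$ to $(z_1,\dots,z_n,-\bar w_1,\dots,-\bar w_n)$; checking on $M=Z$ and on $M=W\mathrm j$ separately (using $\mathrm jz=\bar z\mathrm j$ and $\mathrm j^2=-1$) shows $\phi(Mx)=\hat C(M)\phi(x)$ for all $x$. A short computation then yields $\phi(x)^*\hat C(M)\phi(x)=x^*Mx$ for every $x$ (both sides real when $M=M^*$). Since $\phi$ is onto $\mathbb C^{2n}$, $x^*Mx\ge0$ for all $x\in\mathbb H^n$ iff $v^*\hat C(M)v\ge0$ for all $v\in\mathbb C^{2n}$, which is (3). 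Alternatively one can prove (3) from Schur's theorem / Cholesky: $M=L^*L\Rightarrow\hat C(M)=\hat C(L)^*\hat C(L)\succeq0$, and conversely, since $\iota(\mathbb H^{n\times n})$ is a unital closed real $*$-subalgebra it contains $\hat C(M)^{1/2}$, giving $M\succeq 0$.

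For (2) the self-adjointness equivalence is a one-line entrywise check, independent of the representation: $(\bar M)^*=M^{\tp}$ for any $M$ (since $((\bar M)^*)_{pq}=\overline{(\bar M)_{qp}}=M_{qp}$), and conjugating every entry shows $M=M^*\iff\bar M=M^{\tp}$; hence $M=M^*\iff\bar M=(\bar M)^*$. For the positive-semidefiniteness equivalence I would go through (3): it suffices to show $\hat C(\bar M)\succeq0\iff\hat C(M)\succeq0$ as complex Hermitian matrices. Decomposing $\bar M$ into its complex parts via $\overline{z+w\mathrm j}=\bar z-w\mathrm j$ and feeding this into \eqref{hatCMrep}, one computes $\hat C(\bar M)$ and compares it with the entrywise complex conjugate $\overline{\hat C(M)}$; the goal is to realize $\hat C(\bar M)$ as $\overline{\hat C(M)}$ transformed in a way that preserves positive semidefiniteness of complex Hermitian matrices, after which $A\succeq0\iff\bar A\succeq0$ for $A=A^*$ closes the argument.

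I expect the positive-semidefiniteness half of (2) to be the only real obstacle. Parts (1) and (3) are bookkeeping with \eqref{hatCMrep} and the intertwiner $\phi$; but tracking how the entrywise conjugation $M\mapsto\bar M$ acts in the $\mathbb C^{2\times2}$-block coordinates — and why it should preserve the spectrum — is genuinely sensitive to the noncommutativity of $\mathbb H$ (one cannot, for instance, write $\overline{AB}=\bar A\bar B$, nor $\overline{UDU^*}=\bar UD\bar U^{\tp}$, since the transpose is not an anti-homomorphism over $\mathbb H$), so this is exactly where the care is needed.
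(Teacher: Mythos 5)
Your treatment of (1), of (3), and of the self-adjointness half of (2) is correct and essentially coincides with the paper's. In particular your intertwiner $\phi(z+w\mathrm{j})=(z,-\bar w)$ and the identity $\phi(x)^*\hat C(M)\phi(x)=x^*Mx$ are exactly the paper's substitution $y=\bar w$ (up to the sign flip it handles separately), and surjectivity of $\phi$ is the whole content of (3); the Cholesky alternative also works.

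The gap is in the positive-semidefiniteness half of (2), and it cannot be closed: you never exhibit the PSD-preserving transformation carrying $\overline{\hat C(M)}$ to $\hat C(\bar M)$, and none exists, because the claim $M\in\mathbb{S}^n_+(\mathbb{H})\iff\bar M\in\mathbb{S}^n_+(\mathbb{H})$ is false. Take $n=3$, $v=(1,\mathrm{i},\mathrm{j})^\tp$ and $M=vv^*$, so $M_{pq}=v_p\bar v_q$. Then $M^*=M$ and $x^*Mx=\bigl\lvert\sum_p\bar x_pv_p\bigr\rvert^2\ge0$, so $M\in\mathbb{S}^3_+(\mathbb{H})$. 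But
\[
\bar M=\begin{pmatrix}1&\mathrm{i}&\mathrm{j}\\-\mathrm{i}&1&\mathrm{k}\\-\mathrm{j}&-\mathrm{k}&1\end{pmatrix},
\qquad \bar M v=-v,
\]
so $v^*\bar Mv=-\lVert v\rVert^2=-3<0$ and $\bar M\notin\mathbb{S}^3_+(\mathbb{H})$. Conceptually: a rank-one quaternionic correlation matrix must have the Gram form $(\bar c_p c_q)$, so its $(2,3)$ entry is forced to be $\overline{A_{12}}A_{13}=\bar{\mathrm{i}}\,\mathrm{j}=-\mathrm{k}$, whereas $\bar M_{23}=+\mathrm{k}$; noncommutativity flips exactly this sign. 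The hazard you flagged in your last paragraph --- that entrywise conjugation is not an anti-homomorphism of quaternionic matrix multiplication, so one cannot write $\overline{UDU^*}=\overline{U^*}D\bar U$ --- is precisely the invalid step in the paper's own proof of this part; your instinct was right, but the correct conclusion is that the statement itself fails, not that a cleverer unitary congruence between $\hat C(\bar M)$ and $\overline{\hat C(M)}$ will rescue it. (What does survive, and is what later arguments really use, is the scalar identity $\Re(\bar a\bar b)=\Re(ab)$ and hence $\Re\tr(\bar A\bar B)=\Re\tr(AB)$.)
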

\begin{proof}
\emph{(1)}  Assume that $M=Z+W\mathrm{j}$, where $Z,W\in \mathbb{C}^{n\times n}$.  Then $M^*=Z^*-W^\tp \mathrm{j}$.  Thus $M^*=M$ if and only if $Z^*=Z$ and $-W^\tp=W$.  This is equivalent to the statement that $\hat C(M)\in \mathbb{S}^{2n}(\mathbb{C})$.

\emph{(2)} As $\bar M=\bar Z -W\mathrm{j}$ we deduce that $M$ is self-adjoint if and only if $\bar M$ is self-adjoint.  Suppose that $M\in \mathbb{S}^n_+(\mathbb{H})$.  Then $M=UDU^*$ where $U$ is unitary and $D$ is a real diagonal with nonnegative diagonal entires.  Then $\bar M=\overline{UDU^*}= \overline{U^*} D \bar U=U^\tp D \bar U$.  As $\bar U$ is unitary we deduce that $\bar M\in \mathbb{S}^n_+(\mathbb{H})$.  Similarly if $\bar M$ positive semidefinite then $M$ is positive semidefinite.

\emph{(3)}  Assume that $M$ is self-adjoint.   Then $\hat C(M)$ is positive semidefinite if and only 
$x^*Z x +y^*\bar Z y +2\Re x^*Wy\ge 0$ for 
$x, y\in \mathbb{C}^n$. 
Replace  $x, y$ with $x, -y$ we deduce that 
the above Hermitian form is nonnegative  if and only if the form $x^*Z x +y^*\bar Z y -2\Re x^*Wy$ is nonnegative. 

Assume that $M=Z+W\mathrm{j}\in \mathbb{S}^n(\mathbb{H})$.  Let $x=z+w\mathrm{j}\in \mathbb{H}^n$, where $z,w\in \mathbb{C}^n$.  A straightforward calculation shows:
\begin{eqnarray*}
x^* M x=(z^*-w^\tp \mathrm{j})(Z+W\mathrm{j})(z+w\mathrm{j})=z^* Z z+w^\tp \bar Z \overline{w}-z^*W\overline{w} +w^\tp \bar W z
\end{eqnarray*}
Clearly $(w^\tp \bar W z)^*=z^*W^\tp \overline{w}=-z^*W \overline{w}$.
 As $x^* M x$ is a real number it follows that 
$x^* M x=z^* Z z+w^\tp \bar Z \overline{w}-2\Re z^*W\overline{w}$.  Set $y=\overline{w}$ to deduce the claim.
\end{proof}

For $A=(a_{ij}), B=(b_{ij})\in \mathbb{F}^{m\times n}$ denote by $A\circ B=(a_{ij}b_{ij})$ the Schur product of two matrices.  Assume that $\mathbb{F}\in \{\mathbb{R},\mathbb{C}\}$.  Then the Schur product of two self-adjoint matrices is self-adjoint.
Furthermore, Schur's theorem claims that the Schur product of two positive semidefinite matrices is positive semidefinite.  Assume that $\mathbb{F}=\mathbb{H}$.
Since $\mathbb{H}$ is not commutative the product of two quaternion self-adjoint usually is not self-adjoint.  There are two simple exception:  Assume that $A\in \mathbb{S}^n(\mathbb{H})$.  If either $B\in \mathbb{S}^n(\mathbb{R})$ of $B=\bar A$
then $A\circ B$ is self-adjoint.  In these two cases Schur's theorem hold:

\begin{lemma}[The Schur product theorem for quaternions]
For symmetric positive semidefinite real matrix M and self-adjoint positive semidefinite quaternion matrix N, their Hadamard product, defined by $(M\circ N)_{ij} := M_{ij}N_{ij}$, is self-adjoint positive semidefinite. The matrix $L_{ij} := N_{ij}N_{ij}^* = \|N_{ij}\|^2$ is also self-adjoint positive semidefinite.
\end{lemma}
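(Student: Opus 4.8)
The plan is to exhibit $M\circ N$ and $L$ as Gram matrices of explicit families of vectors in a tensor product of quaternion Hilbert spaces, using Lemma~\ref{teninprodform}, and then to invoke the elementary fact that a Gram matrix over $\mathbb{H}$ is self-adjoint positive semidefinite. So the first step is to record that fact: for any $w_1,\dots,w_n$ in a right quaternion inner product space the matrix $G:=(\langle w_i,w_j\rangle)_{i,j=1}^{n}$ satisfies $G^*=G$, since $\langle w_j,w_i\rangle=\overline{\langle w_i,w_j\rangle}$, and satisfies $z^*Gz=\langle\sum_i w_iz_i,\ \sum_j w_jz_j\rangle=\bigl\|\sum_i w_iz_i\bigr\|^2\ge 0$ for all $z\in\mathbb{H}^n$, hence $G\in\mathbb{S}^n_+(\mathbb{H})$. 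By the spectral/Cholesky factorization recalled above I then write $N=(\langle x_i,x_j\rangle)$ with $x_i\in\mathbb{H}^l$, and $M=(\langle u_i,u_j\rangle)$ with $u_i\in\mathbb{R}^k$ viewed as real vectors of $\mathbb{H}^k$, so that $\langle u_i,u_j\rangle=\sum_p u_{ip}u_{jp}=M_{ij}\in\mathbb{R}$.

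For the Hadamard product, because $M_{ij}=\langle u_i,u_j\rangle$ is real, Lemma~\ref{teninprodform}(1) gives
\[
(M\circ N)_{ij}=M_{ij}N_{ij}=\langle u_i,u_j\rangle\,\langle x_i,x_j\rangle=\langle\, u_i\otimes x_i,\ u_j\otimes x_j\,\rangle
\]
in $\mathbb{H}^k\otimes\mathbb{H}^l$; thus $M\circ N$ is the Gram matrix of $v_i:=u_i\otimes x_i$ and so lies in $\mathbb{S}^n_+(\mathbb{H})$ by the first step. For $L$ (equivalently $L=N\circ\bar N$, the second exceptional case flagged before the lemma), Lemma~\ref{teninprodform}(3) gives
\[
L_{ij}=|N_{ij}|^2=|\langle x_i,x_j\rangle|^2=\Re\langle\,\overline{x_i}\otimes x_i,\ \overline{x_j}\otimes x_j\,\rangle ,
\]
and since $L_{ij}$ is real this identifies $L$ with the entrywise real part $\Re G$ of the Gram matrix $G=(\langle w_i,w_j\rangle)$ of the vectors $w_i:=\overline{x_i}\otimes x_i$. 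It then remains to check that passing to real parts entrywise keeps us in the PSD cone: $\Re G$ is real and symmetric because $\Re G_{ji}=\Re\overline{G_{ij}}=\Re G_{ij}$, and for $z\in\mathbb{R}^n$ we have $z^{\tp}(\Re G)z=\Re(z^*Gz)=z^*Gz\ge 0$ since $z^*Gz$ is already a nonnegative real by the first step. Hence $\Re G\in\mathbb{S}^n_+(\mathbb{R})=\mathbb{S}^n_+(\mathbb{H})\cap\mathbb{R}^{n\times n}$, which finishes the proof.

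The computations are routine once the vectors are chosen; the one place where the noncommutativity of $\mathbb{H}$ genuinely intervenes — and hence the step to treat with care — is the use of Lemma~\ref{teninprodform}(1). Its hypothesis that $\langle u_i,u_j\rangle$ be real is precisely why $M$ must be a real matrix and not merely quaternionic self-adjoint: otherwise the scalar $M_{ij}$ cannot be pulled through the quaternionic product and $M\circ N$ need not even be self-adjoint. For $L$ the same phenomenon forces the symmetrization $\Re\langle\cdot,\cdot\rangle$, which is why the second assertion naturally lands in the real PSD cone. As an alternative to the tensor argument for the first assertion one may argue directly: with $M_{ij}=\sum_p u_{ip}u_{jp}$, for $z\in\mathbb{H}^n$ one has $z^*(M\circ N)z=\sum_p (y^{(p)})^*Ny^{(p)}$ with $y^{(p)}:=(u_{1p}z_1,\dots,u_{np}z_n)^{\tp}$, which is manifestly $\ge 0$; I would pick whichever presentation reads better in context.
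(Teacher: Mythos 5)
Your proof is correct and follows essentially the same route as the paper: both exhibit $M\circ N$ as the Gram matrix of the vectors $u_i\otimes x_i$ via Lemma~\ref{teninprodform}(1) and identify $L$ as the entrywise real part of the Gram matrix of $\overline{x_i}\otimes x_i$ via Lemma~\ref{teninprodform}(3). You merely supply details the paper leaves implicit (that Gram matrices over $\mathbb{H}$ are positive semidefinite and that the real part of a positive semidefinite quaternion matrix is positive semidefinite), which is fine.
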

\begin{proof}
$M$ can be written as $M_{ij} = \langle a_i, a_j \rangle$ where $a_i \in \mathbb{R}^n$. And $N$ can be written as $N_{ij} = \langle x_i, x_j \rangle$ where $x_i \in \mathbb{H}^n$. By Lemma~\ref{teninprodform},
\[
\langle a_i\otimes x_k, a_j\otimes x_l\rangle =  \langle a_i,a_j\rangle \langle x_k,x_l\rangle.
\]
So both the Kronecker product and Schur product of $M$ and $N$ are semidefinite positive.

For the second claim, again by Lemma~\ref{teninprodform}, $L_{ij} = \|\langle x_i, x_j\rangle \|^2= \mathfrak{R}(\langle \bar{x}_i\otimes x_i, \bar{x}_j\otimes x_j \rangle)$. So $L$ is the real part of a positive semidefinite matrix and it is also positive semidefinite.
\end{proof}

\begin{lemma}\label{Gramexist}
Let $\mathcal{H}$ be a Hilbert space over quaternions. Assume that $x_1,\dots,x_n$ be $n$ unit vectors in $\mathcal{H}$.  Then for each $m\in\mathbb{N}$, there exists $m$ unit vectors $x_{1,m}\dots,x_{n,m}\in\mathbb{H}^n$ such that
$\langle x_{i,m}, x_{j,m}\rangle = \langle x_i, x_{j}\rangle |\langle x_i, x_{j}\rangle|^{2m}$.
\end{lemma}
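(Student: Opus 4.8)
The plan is to write the target Gram matrix as a Hadamard (Schur) product of the original Gram matrix with a real Schur power, check that this product is self-adjoint and positive semidefinite with unit diagonal, and then factor it through the spectral theorem for quaternion self-adjoint matrices.

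First I would set $N=(N_{ij})\in\mathbb{H}^{n\times n}$ with $N_{ij}=\langle x_i,x_j\rangle$. By the spectral (Cholesky) theorem for self-adjoint quaternion matrices, $N\in\mathbb{S}^n_+(\mathbb{H})$, and $N_{ii}=\|x_i\|^2=1$. Applying the Schur product theorem for quaternions, the \emph{real} matrix $L:=(\,|N_{ij}|^2\,)=(\,N_{ij}N_{ij}^*\,)$ lies in $\mathbb{S}^n_+(\mathbb{R})$ (it is the real part of the positive semidefinite matrix $(\bar x_i\otimes x_i,\,\bar x_j\otimes x_j)$, cf.\ Lemma~\ref{teninprodform}), and $L_{ii}=1$. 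The classical Schur product theorem over $\mathbb{R}$, applied inductively, then gives that the Hadamard power $L^{\circ m}=(\,|N_{ij}|^{2m}\,)$ is again in $\mathbb{S}^n_+(\mathbb{R})$ with unit diagonal (for $m=0$ read $L^{\circ 0}$ as the all-ones matrix). Now invoke the Schur product theorem for quaternions once more, this time with the real symmetric positive semidefinite matrix $L^{\circ m}$ and the quaternion self-adjoint positive semidefinite matrix $N$: the Hadamard product
\[
A:=N\circ L^{\circ m},\qquad A_{ij}=\langle x_i,x_j\rangle\,|\langle x_i,x_j\rangle|^{2m},
\]
is self-adjoint positive semidefinite, and $A_{ii}=N_{ii}|N_{ii}|^{2m}=1$. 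Finally, by the spectral theorem $A=(A^{1/2})^*A^{1/2}$ with $A^{1/2}\in\mathbb{S}^n_+(\mathbb{H})$; letting $x_{1,m},\dots,x_{n,m}\in\mathbb{H}^n$ be the columns of $A^{1/2}$ we obtain $\langle x_{i,m},x_{j,m}\rangle=A_{ij}=\langle x_i,x_j\rangle\,|\langle x_i,x_j\rangle|^{2m}$ and $\|x_{i,m}\|^2=A_{ii}=1$, as claimed.

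The argument is mostly bookkeeping once the two Schur product theorems and the spectral factorization are available; the one point requiring genuine care is the self-adjointness of $A=N\circ L^{\circ m}$. Over $\mathbb{H}$ the Hadamard product of two self-adjoint matrices is generally \emph{not} self-adjoint, so it is essential that $L^{\circ m}$ be a real matrix — this is precisely the hypothesis under which the quaternion Schur product theorem was stated, and it forces the detour through the real matrix $L=(\,|N_{ij}|^2\,)$ instead of a naive "tensor-power'' construction. (The latter would fail because $\langle \bar x_i\otimes x_i,\bar x_j\otimes x_j\rangle$ need not be real, only its real part is, by Lemma~\ref{teninprodform}, so that identity cannot be iterated.) A side benefit of routing the proof through the Gram-matrix factorization is that it delivers vectors living in $\mathbb{H}^n$ itself rather than in a space of growing dimension, matching the statement.
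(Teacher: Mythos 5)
Your proposal is correct and follows essentially the same route as the paper: the paper's proof sets $A_0=(\langle x_i,x_j\rangle)$ and iterates $A_m=A_{m-1}\circ(A_0\circ\bar A_0)$, which is exactly your $N\circ L^{\circ m}$ with $L=(|N_{ij}|^2)$, invoking the quaternion Schur product theorem at each step and then reading off the vectors from the resulting correlation matrix. Your added explanation of why the real matrix $L$ is the right intermediary (rather than a direct quaternion Hadamard power) is accurate and consistent with the paper's hypotheses.
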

\begin{proof} Let $A_0=(\langle  x_i,x_j\rangle)$.   Then $A_0\in \mathbb{S}^n_+$ is a correlation matrix.  Denote $A_m=A_{m-1}\circ (A_0\circ \bar A_0)$ for $m\in \mathbb{N}$.  Use induction and Lemma \ref{Schurquatlem} to deduce that $A_m$ is a correlation matrix. Then $A_m=(\langle  x_{i,m},x_{j,m}\rangle)$.  Hence $\langle x_{i,m}, x_{j,m}\rangle = \langle x_i, x_{j}\rangle |\langle x_i, x_{j}\rangle|^{2m}$.
\end{proof}

\subsection{The kernel trick}\label{kerneltrick}
We now state the kernel trick for quaternion Hilbert space \cite[\S3.7]{Vershynin}.  This technique that was used in \cite{FLZ}  to deduce in a unified way the Krivine-Haagerup upper bound on the  Grothendieck constant $K_G^{\mathbb{F}}$ for the fields of real or complex numbers $\mathbb{F}$.
\begin{lemma}\label{kerneltrick}
Let $\mathcal{H}$ be a Hilbert space over quaternions.  Assume that $x_1,\dots,x_n,y_1,\dots,y_n$ are $2n$ unit vectors in $\mathcal{H}$.  Suppose that $g(z)$ is an analytic function in the unit complex disk with Taylor series satisfying the following conditions.
\begin{eqnarray*}
g(z)=\sum_{i=0}^\infty a_m z^{2i}, \quad a_i\in\mathbb{R},\quad \sum_{i=0}^\infty |a_m|=1
\end{eqnarray*}
Then there exists $2n$ unit vectors $u_1,\dots,u_n,v_1,\dots,v_n$ in a Hilbert  space $\mathcal{H}'$ such that
\begin{eqnarray}\label{kernid}
\langle x_i, y_j\rangle g(|\langle x_i, y_j|)=
\langle u_{i}, v_{j}\rangle, \quad i,j=1,\dots,n.
\end{eqnarray}
\end{lemma}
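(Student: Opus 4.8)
The plan is to reduce the identity to repeated applications of Lemma~\ref{Gramexist} together with a direct-sum construction, the crucial point being that the coefficients $a_m$ are \emph{real}, so the noncommutativity of $\mathbb{H}$ never enters the scalar bookkeeping. Write $t_{ij}=\langle x_i,y_j\rangle$. By the Cauchy--Schwarz inequality established above, $|t_{ij}|\le 1$, and since $\sum_{m\ge 0}|a_m|<\infty$ the series $g(|t_{ij}|)=\sum_{m\ge 0}a_m|t_{ij}|^{2m}$ converges absolutely (even when $|t_{ij}|=1$), so
\[
\langle x_i,y_j\rangle\, g\bigl(|\langle x_i,y_j\rangle|\bigr)=\sum_{m=0}^{\infty}a_m\, t_{ij}\,|t_{ij}|^{2m}.
\]
Hence it suffices to realize the right-hand side, simultaneously for all $i,j$, as the inner product of two families of unit vectors.

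First I would fix an integer $m\ge 0$ and apply Lemma~\ref{Gramexist} to the single list of $2n$ unit vectors $x_1,\dots,x_n,y_1,\dots,y_n$ (for $m=0$ one simply keeps the original vectors). This produces $2n$ unit vectors $x_1^{(m)},\dots,x_n^{(m)},y_1^{(m)},\dots,y_n^{(m)}$ in $\mathbb{H}^{2n}$ whose Gram matrix is the $m$-fold iterated Schur product of the original correlation matrix $A_0$ with $A_0\circ\overline{A_0}$. Since $(A_0\circ\overline{A_0})_{ij}=|\langle\cdot,\cdot\rangle|^2$ is \emph{real}, each iteration merely multiplies the $(k,l)$ entry by $|t_{kl}|^2$; in particular $\langle x_k^{(m)},y_l^{(m)}\rangle=t_{kl}|t_{kl}|^{2m}$. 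Positive semidefiniteness at every step is precisely the quaternionic Schur product theorem, with the real symmetric matrix $A_0\circ\overline{A_0}$ playing the role of the real factor there.

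Next I would assemble these pieces. Let $\mathcal{H}'=\bigoplus_{m\ge 0}\mathbb{H}^{2n}$ be the Hilbert-space ($\ell^2$) direct sum, put $\epsilon_m=\sign(a_m)$, and discard the $m$-th summand whenever $a_m=0$. Define
\[
u_k=\bigoplus_{m\ge 0}x_k^{(m)}\sqrt{|a_m|},\qquad v_l=\bigoplus_{m\ge 0}y_l^{(m)}\,\epsilon_m\sqrt{|a_m|}.
\]
Because each $x_k^{(m)},y_l^{(m)}$ is a unit vector and the weights $\sqrt{|a_m|},\epsilon_m$ are real, $\|u_k\|^2=\|v_l\|^2=\sum_{m\ge 0}|a_m|=1$, so these are unit vectors and the defining series converge in $\mathcal{H}'$. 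Finally, using that $\langle\cdot,\cdot\rangle$ is conjugate-linear in the first argument and linear in the second, and that real scalars are central in $\mathbb{H}$,
\[
\langle u_k,v_l\rangle=\sum_{m\ge 0}\sqrt{|a_m|}\,\langle x_k^{(m)},y_l^{(m)}\rangle\,\epsilon_m\sqrt{|a_m|}=\sum_{m\ge 0}a_m\,t_{kl}|t_{kl}|^{2m}=\langle x_k,y_l\rangle\, g\bigl(|\langle x_k,y_l\rangle|\bigr),
\]
which is the asserted identity \eqref{kernid}.

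I expect the main obstacle to be conceptual rather than computational: verifying that the iterated Schur products used in Lemma~\ref{Gramexist} stay positive semidefinite over the noncommutative $\mathbb{H}$. This is exactly where the real-coefficient hypothesis pays off — the perturbing matrix $A_0\circ\overline{A_0}$ is real symmetric, so the quaternionic Schur product theorem applies — and it is also what forces $g$ to involve only the even powers $z^{2m}$. The remaining points (absolute convergence, and checking that $\mathcal{H}'$ is a genuine quaternionic Hilbert space containing $u_k,v_l$) are routine once $\sum_{m\ge 0}|a_m|=1$ is invoked.
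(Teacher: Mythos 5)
Your proposal is correct and follows essentially the same route as the paper: realize the Gram matrices $\bigl(t_{kl}\lvert t_{kl}\rvert^{2m}\bigr)$ via Lemma~\ref{Gramexist} (i.e., the quaternionic Schur product theorem with the real factor $A_0\circ\overline{A_0}$), then take a weighted $\ell^2$ direct sum with weights $\sqrt{\lvert a_m\rvert}$ and signs $\sign(a_m)$, using that real scalars are central so the inner-product bookkeeping goes through. The only cosmetic difference is that the paper keeps the $m=0$ summand inside $\mathcal{H}$ itself rather than replacing it by a $2n$-dimensional Gram realization.
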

\begin{proof}
Let $z_i=x_i, z_{n+i}=y_i$ for $i\in[n]$.  Lemma \ref{Gramexist} yields the existence of unit vectors  $w_{i,m}\in \mathbb{H}^{2n}$ such that $\langle w_{i,m}, w_{j,m}\rangle = \langle z_{i}, z_{j}\rangle |\langle z_{i}, z_{j}\rangle|^{2m}$ for each $m\in \mathbb{N}$ and $i,j\in[2n]$.

Let $\mathcal{H}'=\mathcal{H}\oplus (\oplus_{m=1}^\infty \mathbb{H}^{2n})$ with the corresponding induced inner product.  Define
\begin{eqnarray*}
&&u_i=x_i\sqrt{|a_0|}\oplus(\oplus_{m=1}^\infty w_{i,m}\sqrt{|a_m|}),\\
&&v_i=y_i\sign{a_0}\sqrt{|a_0|}\oplus(\oplus_{m=1}^\infty w_{n+i,m}\sign{a_m} \sqrt{|a_m|})
\end{eqnarray*}
for $i\in[n]$.  Then $u_i,v_i$ are unit vectors and \eqref{kernid} holds.
\end{proof}
\subsection{Existence of the Grothendieck constant for quaternions}\label{sec:existGK}
In this paper we view $\mathbb{H}^{m\times n}$ as a left vector space over $\mathbb{H}$.
We introduce two norms on $\mathbb{H}^{m\times n}$:
\begin{eqnarray}\label{defininftyquatnrm}
\|M\|_{\infty,1, \mathbb{H}}=\max\{|\sum_{i,j}^{m,n} M_{ij}\bar\varepsilon_i\delta_j|, \; \varepsilon_i,\delta_j\in\mathbb{H}, |\varepsilon_i|=|\delta_j|=1,i\in[m],j\in[n]\},\\
\label{defGrotquatnrm}
\|M\|_{G,\mathbb{H}}=\max\{|\sum_{i,j}^{m,n} M_{ij}\langle x_i, y_j\rangle|, x_i,y_i \in \mathcal{H}, \|x_i\|= \|y_j\|=1,i\in[m],j\in[n]\}.
\end{eqnarray}
Here $\mathcal{H}$ is a right Hilbert space over quaternions.
If we choose $\mathcal{H}$ to be one dimensional then the maximum in the Grothendieck norm is the maximum of $(\infty,1)$ norm.  Hence we have the inequality $\|M\|_{\infty,1}\le \|M\|_{G,\mathbb{H}}$.  Thus the problem if we have the reverse inequality independent on the dimensions $m,n$:  $K_{G}^{\mathbb{H}}\|M\|_{\infty,1}\ge \|M\|_{G,\mathbb{H}}$?
By multiplying each $\delta_j$ and $y_j$ by a fixed $a\in\mathbb{H}, |a|=1$  from the right, for $j\in[n]$, we can replace absolute values in the definitions of the norms $\|M\|_{\infty,1}, |M\|_{G,\mathbb{H}}$ by the real part
\begin{eqnarray}\label{quatinfty1norm}
\|M\|_{\infty,1,\mathbb{H}}=\max\{\Re(\sum_{i,j}^{m,n} M_{ij}\bar\varepsilon_i\delta_j), \; \varepsilon_i,\delta_j\in\mathbb{H}, |\varepsilon_i|=|\delta_j|=1,i\in[m],j\in[n]\},\\
\label{quatgrotnorm}
\|M\|_{G,\mathbb{H}}=\max\{\Re(\sum_{i,j}^{m,n} M_{ij}\langle x_i, y_j\rangle), x_i,y_j \in \mathcal{H}, \|x_i\|= \|y_j\|,i\in[m],j\in[n]\}.
\end{eqnarray}
Observe next that the maximum in the characterizations above we can replace the equalities 
$|\varepsilon_i|=|\delta_j|=\|x_i\|=\|y_j\|=1$ by the inequalities $|\varepsilon_i|,|\delta_j|,\|x_i\|,\|y_j\|\le 1$ \cite{FL20}.

Next we now are going to replace the maximum in the above characterization for $\|M\|_{\infty,1,\mathbb{H}}$ with quaternions with matrices of sizes $(2m)\times  (2n)$ with complex entries.  We start with the following lemma which follows by straightforward calculation:
\begin{lemma}\label{prod3quat}  Assume that the quaternions $\alpha, \varepsilon, \delta$ have the following matrix representations: 
\[\alpha=\left(\begin{array}{cc}a& b\\-\bar b&\bar a\end{array}\right),\varepsilon=\left(\begin{array}{cc}z&w\\-\bar w&\bar z\end{array}\right), \delta=\left(\begin{array}{cc}u&v\\-\bar v&\bar u\end{array}\right)\in \mathbb{C}^{2\times 2}.\]
Then
\[\Re(\alpha\bar\varepsilon\delta)=\Re((\bar z,\bar w)\left(\begin{array}{cc}a&-\bar b\\ b&\bar a\end{array}\right)(u,v)^{\tp})=\Re((\bar z,\bar w) A(\alpha)^{\tp}(u,v)^{\tp}).\]
\end{lemma}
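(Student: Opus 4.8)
The plan is to move the identity into the standard $2\times 2$ complex matrix model of $\mathbb{H}$ and then carry out a short bilinear computation. Write $\alpha = a + b\mathrm{j}$, $\varepsilon = z + w\mathrm{j}$, $\delta = u + v\mathrm{j}$ with $a,b,z,w,u,v\in\mathbb{C}$, so the three displayed matrices are exactly $C(\alpha)$, $C(\varepsilon)$, $C(\delta)$. Since $C$ is a ring homomorphism that commutes with conjugate transpose (Subsection~\ref{subsec:quat}) and $\Re(q)=\tfrac12\tr C(q)$ by \eqref{tracequat}, we have
\[
\Re(\alpha\bar\varepsilon\delta)=\tfrac12\tr\bigl(C(\alpha)\,C(\varepsilon)^{*}\,C(\delta)\bigr),
\]
where $C(\varepsilon)^{*}$ is obtained from the displayed $C(\varepsilon)$ by transposing and conjugating its entries. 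Thus the whole statement amounts to showing that this trace equals twice the real part of the form $(\bar z,\bar w)\left(\begin{array}{cc}a&-\bar b\\ b&\bar a\end{array}\right)(u,v)^{\tp}$.

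First I would multiply out the three $2\times 2$ matrices and read off the diagonal. The product lies in $\mathcal{C}(\mathbb{H})$ --- it is $C(\alpha\bar\varepsilon\delta)$ --- so it has the shape $\left(\begin{array}{cc}p&q\\-\bar q&\bar p\end{array}\right)$, whence its trace is $2\Re p$ and only the $(1,1)$ entry is needed. A direct computation gives $p=(a\bar z+b\bar w)u-(bz-aw)\bar v$, so $\Re(\alpha\bar\varepsilon\delta)=\Re\bigl[(a\bar z+b\bar w)u-(bz-aw)\bar v\bigr]$. (Equivalently, one can avoid matrices and expand $\alpha\bar\varepsilon\delta=c+d\mathrm{j}$ directly from the rules $\overline{z+w\mathrm{j}}=\bar z-w\mathrm{j}$, $w\mathrm{j}=\mathrm{j}\bar w$, $\mathrm{j}^{2}=-1$ of \eqref{quaterrues}, and take the complex real part of $c$; the same $c$ appears.)

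It remains to expand the right-hand side and match term by term. Multiplying out, $(\bar z,\bar w)\left(\begin{array}{cc}a&-\bar b\\ b&\bar a\end{array}\right)(u,v)^{\tp}=(a\bar z+b\bar w)u+(\bar a\bar w-\bar b\bar z)v$, whose real part is $\Re[(a\bar z+b\bar w)u]+\Re[(\bar a\bar w-\bar b\bar z)v]$. The first summand agrees with the first term above verbatim; for the second, apply $\Re(\bar c)=\Re(c)$ for $c\in\mathbb{C}$ (a special case of \eqref{tracecomrel}) together with commutativity of $\mathbb{C}$:
\[
\Re[(\bar a\bar w-\bar b\bar z)v]=\Re\,\overline{(\bar a\bar w-\bar b\bar z)v}=\Re[(aw-bz)\bar v]=-\Re[(bz-aw)\bar v].
\]
This is exactly the remaining term of $\Re(\alpha\bar\varepsilon\delta)$, so the two sides coincide. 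There is no genuine obstacle here: this is pure bookkeeping, and the only point requiring care is keeping the conjugations straight while moving $\mathrm{j}$ (equivalently, applying the $*$ in $C(\varepsilon)^{*}$) past complex scalars, and observing that precisely the two $v$-terms must be conjugated before the agreement with the matrix form becomes visible.
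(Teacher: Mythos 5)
Your computation is correct: expanding $\alpha\bar\varepsilon\delta=(a\bar z+b\bar w)u-(bz-aw)\bar v+(\cdots)\mathrm{j}$ and matching it against $(a\bar z+b\bar w)u+(\bar a\bar w-\bar b\bar z)v$ after conjugating the $v$-term is exactly the "straightforward calculation" the paper asserts but does not write out. This is the intended argument (whether done via the $C(\cdot)$ matrices or directly from the rules \eqref{quaterrues}), so there is nothing to add.
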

\begin{lemma}\label{compverinfty1quat} Let 
\[M=(M_{ij})\in \mathbb{H}^{m\times n}, \varepsilon_i=(z_i,w_i), \delta_{j}=(u_i,v_i)\in\mathbb{H}, z_i,w_i,u_j,v_j\in\mathbb{C}, \]
where $|z_i|^2+|w_i|^2=|u_i|^2+|v_i|^2=1, i\in[m],j\in[n]$.
To each quaternion $M_{ij}=(M_{ij,1},M_{ij,2})$ associate the matrix $C(M_{ij})=\left(\begin{array}{cc} M_{ij,1}&M_{ij,2}\\ -\bar M_{ij,2}&\bar M_{ij,1}\end{array}\right)$.  Let  $\tilde M=(C(M_{ij})^\tp)\in \mathbb{C}^{(2m)\times (2n)}$ and
\begin{eqnarray*}
e=(\bar z_1,\bar w_1,\bar z_2, \bar w_2,\dots,\bar z_m,\bar w_m)^\tp\in\mathbb{C}^{2m}, \quad
d=(u_1,v_1, u_2,v_2,\dots,u_n, v_n)^\tp\in\mathbb{C}^{2n}.
\end{eqnarray*}
Then
\begin{eqnarray*}
Re(\sum_{i,j}^{m,n} M_{ij}\bar\varepsilon_i\delta_j) =\Re(e^\tp\tilde Md) ,
\textrm{ where } \varepsilon_i,\delta_j\in\mathbb{H}, |\varepsilon_i|=|\delta_j|=|z_i|^2+|w_i^2|=|u_j|^2 + |v_j|^2=1, 
\end{eqnarray*}
and
\begin{equation}\label{equivdefint1quat}
\|M\|_{1,\infty,\mathbb{H}}=\max\{\Re(e^\tp\tilde Md), e\in\mathbb{C}^{2m},d\in\mathbb{C}^2n, |z_i|^2+|w_i^2|=|u_j|^2 + |v_j|^2=1,
\end{equation}
for $i\in[m],j\in[n]$.  In particular
\begin{equation}\label{ineqdifnorms}
\|M\|_{1,\infty,\mathbb{H}}\le \|\tilde M\|_{1,\infty,\mathbb{C}}\le 2\|M\|_{1,\infty,\mathbb{H}}.
\end{equation}
 \end{lemma}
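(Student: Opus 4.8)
The plan is to reduce all three assertions to Lemma~\ref{prod3quat} applied blockwise, together with elementary facts about where $\Re$ of a multilinear form attains its maximum on balls, bidisks and tori.

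First I would prove the identity $\Re(\sum_{i,j}M_{ij}\bar\varepsilon_i\delta_j)=\Re(e^\tp\tilde M d)$. Reading $\tilde M$ in its $m\times n$ block form, $e^\tp\tilde M d=\sum_{i,j}(\bar z_i,\bar w_i)\,C(M_{ij})^\tp\,(u_j,v_j)^\tp$, so it suffices to check the scalar identity $\Re\big(M_{ij}\bar\varepsilon_i\delta_j\big)=\Re\big((\bar z_i,\bar w_i)\,C(M_{ij})^\tp\,(u_j,v_j)^\tp\big)$ for each pair $i,j$ and then pull $\Re$ out of the finite sum. But this scalar identity is exactly Lemma~\ref{prod3quat} with $\alpha=M_{ij}$, $\varepsilon=\varepsilon_i$, $\delta=\delta_j$, since the matrix $A(\alpha)$ appearing there is $C(\alpha)$ and hence $A(\alpha)^\tp=C(\alpha)^\tp$. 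Substituting this identity into the expression \eqref{quatinfty1norm} and observing that the constraint $|\varepsilon_i|=1$ is the constraint $|z_i|^2+|w_i|^2=1$ (similarly for $\delta_j$) yields \eqref{equivdefint1quat} immediately.

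For \eqref{ineqdifnorms} I would compare feasible sets after rescaling, using two standard one-variable observations: (i) $\Re$ of a fixed $\mathbb{C}$-linear functional on the closed unit disk, or on the closed unit ball of $\mathbb{C}^2$, is maximized on the boundary; applying this coordinate-pair by coordinate-pair to $\Re(e^\tp\tilde M d)$ shows that the maximum defining $\|M\|_{1,\infty,\mathbb{H}}$, which by \eqref{equivdefint1quat} runs over products of unit spheres $\{|z_i|^2+|w_i|^2=1\}$, equals the maximum over the corresponding products of closed unit balls; and (ii) the same coordinatewise argument shows that $\|\tilde M\|_{1,\infty,\mathbb{C}}$, that is, the maximum of $\Re(e^\tp\tilde M d)$ over the torus $\{|e_k|=|d_l|=1\}$, equals its maximum over the polydisk $\{|e_k|,|d_l|\le 1\}$. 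Since each ball $\{|z_i|^2+|w_i|^2\le1\}$ is contained in the bidisk $\{|z_i|\le 1,\ |w_i|\le1\}$, combining (i) and (ii) gives the left inequality $\|M\|_{1,\infty,\mathbb{H}}\le\|\tilde M\|_{1,\infty,\mathbb{C}}$. For the right inequality, given any $e,d$ on the torus put $e'=e/\sqrt2$, $d'=d/\sqrt2$; then every pair $(z_i/\sqrt2,w_i/\sqrt2)$ has squared norm $1$, so $(e',d')$ is feasible in \eqref{equivdefint1quat}, and $\Re(e^\tp\tilde M d)=2\,\Re(e'^\tp\tilde M d')\le 2\,\|M\|_{1,\infty,\mathbb{H}}$; maximizing over the torus gives $\|\tilde M\|_{1,\infty,\mathbb{C}}\le 2\,\|M\|_{1,\infty,\mathbb{H}}$.

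I do not expect a genuine obstacle here; the one place demanding care is the first identity, where the pattern of conjugates and the transpose of $C(M_{ij})$ must be matched precisely against Lemma~\ref{prod3quat} and against the definitions of $\tilde M$, $e$ and $d$. The factor $2$ in \eqref{ineqdifnorms} is unavoidable and comes solely from the mismatch between the quaternionic normalization $|z_i|^2+|w_i|^2=1$ and the complex normalization $|z_i|=|w_i|=1$, which differ by a factor of $\sqrt2$.
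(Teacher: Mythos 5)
Your proposal is correct and follows essentially the same route as the paper: the identity comes from applying Lemma~\ref{prod3quat} blockwise (the paper's proof cites Lemma~\ref{compverinfty1quat} itself at that point, which is evidently a typo for Lemma~\ref{prod3quat}), the left inequality in \eqref{ineqdifnorms} from the inclusion of the unit sphere of $\mathbb{C}^2$ in the bidisk together with the equivalence of sphere/ball (resp.\ torus/polydisk) maximizations, and the right inequality from the rescaling $e\mapsto e/\sqrt2$, $d\mapsto d/\sqrt2$. No substantive differences to report.
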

\begin{proof} Use Lemma \ref{compverinfty1quat} to obtain the first equality of the lemma.
Clearly, the set $|z|^2+|w|^2=1$ is a subset of $|z|\le 1, |w|\le 1$.  Hence the characterizations \eqref{equivdefint1quat} and the complex norm $\|\tilde M\|_{\infty,1,\mathbb{C}}$ yield the first inequality in \eqref{ineqdifnorms}.
In the equality $|z|^2+|w|^2=1$ choose a special case $|z|=|w|=\frac{1}{\sqrt{2}}$.  Then $|\sqrt{2}z|=|\sqrt{2}w|=1$.   Hence the maximal characterization for $2\tilde M$ corresponding to $\|2M\|_{\infty,1,\mathbb{H}}$ is not less than $\|\tilde M\|_{\infty,1,\mathbb{C}}$.
\end{proof}

We now recast $\|M\|_{G,\mathbb{H}}$ in termis of the above matrix $\tilde M$.
We first write down $x,y\in\mathbb{H}^l$ as $z+w\mathrm{j}, u+v\mathrm{j}$, where $z,w,u,v\in\mathbb{C}^l$.  
We next observe that 
\begin{equation}\label{inprodquatfor}
\langle x,y\rangle=\langle z+w\mathrm{j},u+v\mathrm{j}\rangle=(\langle z,u\rangle+
\langle v,w\rangle)+
(\langle z,v\rangle -\langle u,w\rangle)\mathrm{j}.
\end{equation}
Introduce the following four vectors:
\[f_1=(z^\tp,\overline{w}^\tp)^\tp,\,f_2=(w^\tp,-\overline{z}^\tp)^\tp,\,g_1=(u^\tp,\overline{v}^\tp)^\tp,\,g_2=(v^\tp,-\overline{u}^\tp)^\tp\in\mathbb{C}^{2l}. \]
Then
\[\langle x,y\rangle=\langle f_1,g_1\rangle +\langle f_1,g_2\rangle\mathrm{j}=\overline{\langle f_2,g_2\rangle}-\overline{\langle f_2,g_1\rangle}\mathrm{j}.\]
Note that $\langle f,g\rangle=f^*g$ is the standard inner product on $\mathbb{C}^{2l}$.  Furthermore
\[\|f_1\|=\|f_2\|=\|x\|, \quad \|g_1\|=\|g_2\|=\|y\|,\]
The next lemma relates $\Re(\alpha\langle x,y\rangle)$ to the corresponding $\Re(\sum_{i,j=1}^2 (A(\alpha)^{\top})_{ij}\langle f_i,g_j\rangle)$:
\begin{lemma}\label{grotquatcompl}  Let $\alpha=a+b\mathrm{j}\in \mathbb{H}$ and $x,y\in\mathbb{H}^l$.  Then 
\[\Re(\alpha\langle x,y\rangle)=\frac{1}{2}\Re(\sum_{p,q=1}^2 (A(\alpha)^\tp)_{pq}\langle f_p,g_q\rangle).\]
\end{lemma}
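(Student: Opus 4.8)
The plan is to prove this as a direct pointwise computation in $\mathbb{H}$, using the presentation $z+w\mathrm{j}$ with $z,w\in\mathbb{C}$ together with the multiplication rule $w\mathrm{j}=\mathrm{j}\bar w$ from \eqref{quaterrues} (equivalently $\mathrm{j}c=\bar c\mathrm{j}$ for $c\in\mathbb{C}$), and feeding in the two scalar identities for $\langle x,y\rangle$ in terms of $f_1,f_2,g_1,g_2$ displayed just above the statement. No Hilbert-space machinery is needed: unlike the rectangular Grothendieck bound, this is an algebraic identity between two real numbers.

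First I would set $s:=\langle f_1,g_1\rangle\in\mathbb{C}$ and $t:=\langle f_1,g_2\rangle\in\mathbb{C}$. The displayed identity $\langle x,y\rangle=\langle f_1,g_1\rangle+\langle f_1,g_2\rangle\mathrm{j}=\overline{\langle f_2,g_2\rangle}-\overline{\langle f_2,g_1\rangle}\mathrm{j}$ then reads $\langle x,y\rangle=s+t\mathrm{j}$, $\langle f_2,g_2\rangle=\bar s$, and $\langle f_2,g_1\rangle=-\bar t$. Writing $\alpha=a+b\mathrm{j}$ and expanding with $\mathrm{j}s=\bar s\mathrm{j}$ and $\mathrm{j}t\mathrm{j}=\bar t\,\mathrm{j}^2=-\bar t$ gives
\[
\alpha\langle x,y\rangle=(a+b\mathrm{j})(s+t\mathrm{j})=as+b\mathrm{j}s+at\mathrm{j}+b\mathrm{j}t\mathrm{j}=(as-b\bar t)+(at+b\bar s)\mathrm{j},
\]
so that $\Re(\alpha\langle x,y\rangle)=\Re(as)-\Re(b\bar t)$, where on the right $\Re$ is the ordinary complex real part.

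Next I would expand the right-hand side. From the definition of $A(\alpha)$ in Lemma~\ref{prod3quat}, with $\alpha=a+b\mathrm{j}$ one has $A(\alpha)^\tp=\left(\begin{smallmatrix} a & -\bar b\\ b & \bar a\end{smallmatrix}\right)$, hence
\[
\sum_{p,q=1}^2(A(\alpha)^\tp)_{pq}\langle f_p,g_q\rangle = a\langle f_1,g_1\rangle-\bar b\langle f_1,g_2\rangle+b\langle f_2,g_1\rangle+\bar a\langle f_2,g_2\rangle = as-\bar b t-b\bar t+\bar a\bar s,
\]
after substituting the four scalar values found above. Taking real parts and using, for complex numbers, $\Re(\bar\zeta)=\Re(\zeta)$ together with commutativity — so $\Re(\bar a\bar s)=\Re(as)$ and $\Re(\bar b t)=\Re(\overline{b\bar t})=\Re(b\bar t)$ — the right-hand side collapses to $\tfrac12\bigl(2\Re(as)-2\Re(b\bar t)\bigr)=\Re(as)-\Re(b\bar t)$, which equals the left-hand side. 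The only point requiring care is the bookkeeping: getting the off-diagonal entries of $A(\alpha)^\tp$ and their signs right and correctly identifying which of the $\langle f_p,g_q\rangle$ equals $-\bar t$ rather than $\bar t$; there is no genuine obstacle. One could instead deduce the identity coordinatewise from Lemma~\ref{prod3quat} applied with $\varepsilon,\delta$ assembled from the complex coordinates of $x,y$, but the direct expansion above is the shortest route.
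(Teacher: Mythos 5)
Your proof is correct and follows essentially the same route as the paper's: a direct algebraic expansion of $\alpha\langle x,y\rangle$ using the decomposition $\langle x,y\rangle=s+t\mathrm{j}$ with $s=\langle f_1,g_1\rangle$, $t=\langle f_1,g_2\rangle$, the identities $\langle f_2,g_2\rangle=\bar s$, $\langle f_2,g_1\rangle=-\bar t$, and the observation that the four-term sum on the right has real part exactly twice $\Re(as)-\Re(b\bar t)$. Your introduction of the shorthand $s,t$ makes the bookkeeping slightly cleaner than the paper's, but the argument is the same.
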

\begin{proof}The above formulas yield
\begin{eqnarray*}
\Re(\alpha\langle x,y\rangle)&&=\Re(a(\langle z,u\rangle+
\langle v,w\rangle)-b(\overline{\langle z,v\rangle -\langle u,w\rangle})=
\Re(a(\langle u,z\rangle+
\langle w,v\rangle)-b(\langle v,z\rangle-\langle w,u\rangle )\\
&&=\Re(a \langle f_1,g_1\rangle +b\langle f_2,g_1\rangle)=\Re(\bar a \langle f_2,g_2\rangle -\bar b\langle f_1,g_2\rangle)
\end{eqnarray*}
As $A(\alpha)=\left(\begin{array}{cc}a& b\\-\bar b&\bar a\end{array}\right)$ we get
\[\Re(\sum_{p,q=1}^2 (A(\alpha)^\tp)_{pq}\langle f_p,g_q\rangle)=\Re(a\langle f_1,g_1\rangle-\bar b \langle f_1,g_2\rangle+b\langle f_2,g_1\rangle+\bar a\langle f_2,g_2\rangle).\]
Compare the two expressions to deduce the lemma.
\end{proof}

The following result is an improvement of the Grothendieck's result \cite{Grothendieck} that $K_G^{\mathbb{C}}\le 2K_G^{\mathbb{R}}$,  and an analog of \cite[(63)]{FL20}:
\begin{theorem}\label{quatlescomp}  The Grothendieck constant for quaternions is not more then the Grothendieck constant for complex numbers: $K_G^{\mathbb{H}}\le K_G^{\mathbb{C}}$.
\end{theorem}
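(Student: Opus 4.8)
The plan is to glue together the two preparatory lemmas so that their ``error constants'' cancel: Lemma~\ref{grotquatcompl} converts the real part of a quaternionic Grothendieck form into a \emph{complex} Grothendieck form at the price of a factor $\tfrac12$, while the second inequality in \eqref{ineqdifnorms} bounds the complex $(\infty,1)$-norm of the associated matrix $\tilde M$ by $2\|M\|_{\infty,1,\mathbb{H}}$; the factors $\tfrac12$ and $2$ then cancel, leaving exactly the constant $K_G^{\mathbb{C}}$.

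First I would fix $M=(M_{ij})\in\mathbb{H}^{m\times n}$ and arbitrary unit vectors $x_1,\dots,x_m,y_1,\dots,y_n$ in a quaternion Hilbert space $\mathcal H$. By the Gram--Schmidt process (Lemma~\ref{GSP}) these $m+n$ vectors lie in a finite-dimensional subspace isometric to $\mathbb{H}^l$ with the standard inner product, so I may assume $x_i,y_j\in\mathbb{H}^l$ and write $x_i=z^{(i)}+w^{(i)}\mathrm j$, $y_j=u^{(j)}+v^{(j)}\mathrm j$ with $z^{(i)},w^{(i)},u^{(j)},v^{(j)}\in\mathbb{C}^l$, forming the complex vectors $f_1^{(i)},f_2^{(i)},g_1^{(j)},g_2^{(j)}\in\mathbb{C}^{2l}$ exactly as in the paragraph preceding Lemma~\ref{grotquatcompl}. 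These are all \emph{unit} vectors, since $\|f_1^{(i)}\|=\|f_2^{(i)}\|=\|x_i\|=1$ and $\|g_1^{(j)}\|=\|g_2^{(j)}\|=\|y_j\|=1$. Applying Lemma~\ref{grotquatcompl} with $\alpha=M_{ij}$ to each summand and adding up,
\[
\Re\Bigl(\sum_{i,j}M_{ij}\langle x_i,y_j\rangle\Bigr)=\frac12\,\Re\Bigl(\sum_{i,j}\sum_{p,q=1}^2 (A(M_{ij})^\tp)_{pq}\,\langle f_p^{(i)},g_q^{(j)}\rangle\Bigr).
\]
Since $A(M_{ij})^\tp=C(M_{ij})^\tp$ is precisely the $(i,j)$-block of the matrix $\tilde M\in\mathbb{C}^{(2m)\times(2n)}$ of Lemma~\ref{compverinfty1quat}, the right-hand side is $\tfrac12$ times a complex Grothendieck form for $\tilde M$ evaluated at the $2m$ unit vectors $\{f_p^{(i)}\}$ and the $2n$ unit vectors $\{g_q^{(j)}\}$; hence it is at most $\tfrac12\|\tilde M\|_{G,\mathbb{C}}\le\tfrac12 K_G^{\mathbb{C}}\|\tilde M\|_{\infty,1,\mathbb{C}}$ by the definition of $K_G^{\mathbb{C}}$.

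Next I would invoke the second inequality of \eqref{ineqdifnorms}, $\|\tilde M\|_{\infty,1,\mathbb{C}}\le 2\|M\|_{\infty,1,\mathbb{H}}$, to obtain
\[
\Re\Bigl(\sum_{i,j}M_{ij}\langle x_i,y_j\rangle\Bigr)\le\tfrac12 K_G^{\mathbb{C}}\cdot 2\|M\|_{\infty,1,\mathbb{H}}=K_G^{\mathbb{C}}\,\|M\|_{\infty,1,\mathbb{H}}.
\]
Taking the supremum over all admissible $x_i,y_j$ and using the characterization \eqref{quatgrotnorm} gives $\|M\|_{G,\mathbb{H}}\le K_G^{\mathbb{C}}\|M\|_{\infty,1,\mathbb{H}}$ for every $m,n$ and every $M$, which is exactly the assertion $K_G^{\mathbb{H}}\le K_G^{\mathbb{C}}$.

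The argument is essentially bookkeeping, and I do not anticipate a substantive obstacle; the only points requiring care are (i) checking that the complex vectors $f_p^{(i)},g_q^{(j)}$ are genuinely unit vectors, so that the complex Grothendieck norm applies to them, and (ii) confirming that the block structure of $\tilde M$ (with blocks $C(M_{ij})^\tp=A(M_{ij})^\tp$) matches the coefficients appearing in Lemma~\ref{grotquatcompl}, so that the factor $\tfrac12$ from the real-part identity and the factor $2$ from the ``$|z|=|w|=1/\sqrt2$'' embedding in \eqref{ineqdifnorms} indeed cancel.
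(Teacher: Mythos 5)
Your proposal is correct and follows essentially the same route as the paper: both reduce to $\mathbb{H}^{l}$ via Gram--Schmidt, use Lemma~\ref{grotquatcompl} to rewrite the quaternionic Grothendieck form as $\tfrac12$ times a complex Grothendieck form for $\tilde M$ evaluated at special unit vectors, bound that by $\tfrac12 K_G^{\mathbb{C}}\lVert\tilde M\rVert_{\infty,1,\mathbb{C}}$, and then cancel the factor $2$ via the second inequality in \eqref{ineqdifnorms}. The only differences are cosmetic (your indexing $f_p^{(i)}$ versus the paper's $f_{2(i-1)+p}$, and your correct statement of \eqref{ineqdifnorms} with $\lVert M\rVert_{\infty,1,\mathbb{H}}$ on the right, where the paper has a typo).
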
 
\begin{proof}   Let $M=(M_{ij})\in \mathbb{H}^{m\times n}$.  Let $\tilde M\in\mathbb{C}^{(2m)\times (2n)}$ be defined as in the proof of Lemma \ref{compverinfty1quat}.  Lemma \ref{compverinfty1quat} claimes that $\|\tilde M\|_{\infty,1,\mathbb{C}}\le 2 \|\tilde M\|_{\infty,1,\mathbb{H}}$.

Assume that the entries of $\tilde M$ are $\hat M_{ij}\in\mathbb{C}$ where $i\in[2m],j\in[2n]$.
Let $x_1,\dots,x_m,y_1,\dots,y_n
\in\mathcal{H}$, be vectors of norm one, where $\mathcal{H}$ is an inner product space over quaternions.  As one has Gram-Schmidt process in $\mathcal{H}$ we can assume that  $x_1,\dots,x_m,y_1,\dots,y_n\in \mathbb{H}^{n+m}$. 
Consider the Grothendieck norm $\|M\|_{G,\mathbb{H}}$.  For each $x_i, y_j$ define $f_{2i-1}, f_{2i}, g_{2j-1}, f_{2j}$ as before the proof of Lemma \ref{grotquatcompl}.  The proof of Lemma \ref{grotquatcompl} that $\|M\|_{G,\mathbb{H}}$ is maximum on all $f_1,\dots,f_{2m},g_1,\dots,g_{2n}\in \mathbb{C}^{2(n+m)}$ of the expression
\[\frac{1}{2} \Re(\sum_{i,j=1}^{m,n} \sum_{p,q=1}^2 (C(M_{i,j})^\tp)_{pq}\langle f_{2(i-1)+p},g_{2(j-1)+q}\rangle).\]
Clearly, the above maximum is not more then the maximum for $\frac{1}{2}\|\tilde M\|_{G,\mathbb{C}}$, since the vectors $f_{2i-1}, f_{2i}, g_{2j-1}, f_{2j}$ are of a special form.  Hence 
\[\|M\|_{G,\mathbb{H}}\le \frac{1}{2}\|\tilde M\|_{G,\mathbb{C}}\le \frac{K_G^{\mathbb{C}}}{2}\|\tilde M\|_{\infty,1,\mathbb{C}}\le K_{G}^{\mathbb{C}}\|M\|_{\infty,1,\mathbb{H}}.\]
(The right hand side inequality follows from \eqref{ineqdifnorms}.)
\end{proof}
\subsection{Semidefinite programming for computing quaternion Grothendieck norm} \label{subsec: quatSDP}
In this subsection we state the computation of $\|M\|_{G,\mathbb{H}}$  as an SDP problem on positive semidefinite Hermitian matrices.  We use the characterization \eqref{quatgrotnorm}.  Let $z_i=x_i, z_{n+j}=y_j$ for $i\in[m], j\in [n]$.  Denote by $\mathbb{G}^p(\mathbb{H})\subset \mathbb{S}^p_+(\mathbb{H})$ the convex set of quaternion correlations matrices, i.e., all positive semidefinite quaternionic matrices whose diagonal entries are $1$.   Assume that   $G=G(z_1,\dots,z_{m+n})\in \mathbb{G}^{m+n}(\mathbb{H})$.  Let $H=\hat C(G)$.  Then $H\in \mathbb{S}^{2(m+n)}(\mathbb{C})$ is a complex correlation matrix of the form \eqref{hatCMrep}, where $Z,W\in \mathbb{C}^{m+n}$ and $W^\tp=-W$.  Let us denote 
this real subspace of complex correlation matrices by $\mathcal{C}^{2(m+n)}$.  Define as in \cite{FL20}
\begin{eqnarray*}
A(M) = \begin{bmatrix}  0 & M \\ M^* & 0 \end{bmatrix} \in \mathbb{S}^{m+n}(\mathbb{H})
\end{eqnarray*}
\begin{lemma}\label{SDPGHchar}  Assume that $M\in \mathbb{H}^{m\times n}$.  Then
\begin{eqnarray*}
\|M\|_{G,\mathbb{H}}=\frac{1}{2}\max\{\Re\tr A(M)G,G\in \mathbb{G}^{m+n}(\mathbb{H})\}= 
\frac{1}{4}\max\{\tr \hat C(A(M))H, H\in \mathcal{C}^{2(m+n)}\}.
\end{eqnarray*}
\end{lemma}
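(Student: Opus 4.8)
The plan is to reduce the quaternionic Grothendieck norm to a real-linear maximization over the cone of quaternion correlation matrices, and then to transfer that to the complex side via the isomorphism $\hat C$. First I would recall from \eqref{quatgrotnorm} that $\|M\|_{G,\mathbb{H}}$ is the maximum of $\Re\bigl(\sum_{i,j} M_{ij}\langle x_i,y_j\rangle\bigr)$ over unit vectors $x_i,y_j$ in a quaternion Hilbert space $\mathcal{H}$. Using the Gram--Schmidt process (Lemma~\ref{GSP}), all the $x_i$ and $y_j$ may be taken inside $\mathbb{H}^{m+n}$, so that the Gram matrix $G = (\langle z_k, z_l\rangle)_{k,l=1}^{m+n}$, with $z_i = x_i$ and $z_{n+j} = y_j$, is a quaternion correlation matrix, i.e.\ $G\in\mathbb{G}^{m+n}(\mathbb{H})$; conversely, by Schur's theorem for quaternions (the spectral/Cholesky decomposition in \S\ref{subsec:Schurquat}) every $G\in\mathbb{G}^{m+n}(\mathbb{H})$ arises this way. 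With $A(M)$ as defined, a direct block computation gives
\[
\Re\tr\bigl(A(M)G\bigr) = \Re\sum_{i=1}^m\sum_{j=1}^n \bigl(M_{ij}\langle y_j,x_i\rangle + \overline{M_{ij}}\,\langle x_i,y_j\rangle\overline{\phantom{x}}\!\bigr) = 2\,\Re\sum_{i,j} M_{ij}\langle x_i,y_j\rangle,
\]
where I would use $\Re(ab)=\Re(ba)=\Re(\bar a\bar b)$ from \eqref{tracecomrel} together with $\langle y_j,x_i\rangle = \overline{\langle x_i,y_j\rangle}$ to collapse the two terms. This establishes the first equality $\|M\|_{G,\mathbb{H}} = \tfrac12\max\{\Re\tr A(M)G : G\in\mathbb{G}^{m+n}(\mathbb{H})\}$.

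For the second equality I would invoke the isomorphism $\iota: \mathbb{H}^{p\times p}\to\mathbb{C}^{(2p)\times(2p)}$, $N\mapsto\hat C(N)$, which preserves products and conjugate transpose. By Lemma~\ref{sapdquatcon}(3), $G\in\mathbb{S}^{m+n}_+(\mathbb{H})$ if and only if $\hat C(G)\in\mathbb{S}^{2(m+n)}_+(\mathbb{C})$, and the diagonal-entries-equal-to-$1$ condition on $G$ translates (via \eqref{hatCMrep}, where the $2\times 2$ diagonal blocks of $\hat C(G)$ are $\diag$ with the real number $G_{kk}$, using that $W^\tp=-W$ forces $W_{kk}=0$) into $\hat C(G)$ being a complex correlation matrix lying in the real subspace $\mathcal{C}^{2(m+n)}$. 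Thus $G\mapsto H=\hat C(G)$ is a bijection $\mathbb{G}^{m+n}(\mathbb{H})\to\mathcal{C}^{2(m+n)}$. It remains to compare the objective functions: since $\hat C$ is multiplicative, $\hat C(A(M)G) = \hat C(A(M))\,\hat C(G) = \hat C(A(M))H$, and from \eqref{tracequat}, $\Re(a) = \tfrac12\tr C(a)$ for a single quaternion, hence $\Re\tr(A(M)G) = \tfrac12\tr\bigl(\hat C(A(M))H\bigr)$ (the entrywise trace over $\mathbb{H}$ becomes half the ordinary trace over $\mathbb{C}$, summing the $\tfrac12\tr$ of each diagonal $2\times 2$ block). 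Combining, $\tfrac12\Re\tr(A(M)G) = \tfrac14\tr(\hat C(A(M))H)$, which gives the claimed formula.

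The step I expect to require the most care is the bookkeeping in the second equality: one must check that the permutation relabeling built into $\hat C$ versus the unpermuted $C$ does not affect the trace (traces are conjugation-invariant under the permutation $P$), that the ``diagonal $=1$'' constraint really is equivalent to $H\in\mathcal{C}^{2(m+n)}$ with \emph{all} $2(m+n)$ diagonal entries equal to $1$ (this needs the skew-symmetry of the $W$-block so the ``extra'' diagonal entries $\bar Z_{kk} = Z_{kk}$ are automatically $1$ as well), and that $\tr\hat C(A(M))H$ is genuinely real so the $\Re$ may be dropped — which follows because $A(M)$ is self-adjoint, hence $\hat C(A(M))$ and $H$ are both Hermitian, making $\tr(\hat C(A(M))H)$ real. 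The first equality, by contrast, is a routine block-matrix computation once the spectral theorem is in hand.
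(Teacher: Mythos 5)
Your overall strategy --- identify $\|M\|_{G,\mathbb{H}}$ with an optimization over Gram matrices in $\mathbb{G}^{m+n}(\mathbb{H})$, then transport it to $\mathcal{C}^{2(m+n)}$ via $\hat C$ --- is the same as the paper's, and your treatment of the second equality is in fact cleaner: you use that $\hat C$ is a trace-compatible algebra homomorphism with $\tr \hat C(N)=2\Re\tr N$, whereas the paper grinds through the block decomposition $A(M)=A_1+A_2\mathrm{j}$, $G=G_1+G_2\mathrm{j}$ and verifies $\tr\bigl(\hat C(A(M))\hat C(\bar G)\bigr)=2\Re\tr(A_1\bar G_1+A_2\bar G_2)$ by hand. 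Your remarks that conjugation by the permutation $P$ does not affect the trace, and that skew-symmetry of $W$ forces all $2(m+n)$ diagonal entries of $\hat C(G)$ to equal $1$, are correct and are details the paper leaves implicit.

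There is, however, one quaternionic subtlety you elide in the first equality. Expanding $\tr A(M)G$ with $G_{kl}=\langle z_k,z_l\rangle$ produces the two terms $M_{ij}\langle y_j,x_i\rangle$ and $\overline{M_{ij}}\langle x_i,y_j\rangle$; by \eqref{tracecomrel} these have equal real parts, but they sum to $2\Re\bigl(\overline{M_{ij}}\langle x_i,y_j\rangle\bigr)$, not to $2\Re\bigl(M_{ij}\langle x_i,y_j\rangle\bigr)$ --- over $\mathbb{H}$ these differ, so your displayed identity is false pointwise. The two maxima nevertheless coincide, but seeing this requires the extra step the paper supplies: compute $\Re\tr A(M)\bar G=2\Re\sum_{i,j} M_{ij}\langle x_i,y_j\rangle$ instead, and invoke Lemma~\ref{sapdquatcon} to conclude that $G\mapsto\bar G$ is a bijection of $\mathbb{G}^{m+n}(\mathbb{H})$, so maximizing over $\bar G$ is the same as maximizing over $G$. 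With that one-line repair your argument is complete.
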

\begin{proof}  Let $G=G(z_1,\dots,z_{m+n})$ be defined as above.
Then 
\begin{eqnarray*}
&&\tr A(M) \bar G= \sum_{i=1}^m \sum_{j=1}^n M_{ij}\overline{\langle y_j,x_i\rangle} +\overline{M_{ji}}\overline{\langle x_i,y_j\rangle}=\sum_{i=1}^m \sum_{j=1}^n M_{ij}\langle x_i,y_j\rangle +\overline{M_{ji}}\overline{\langle x_i,y_j\rangle},\\
&&\Re\tr A(M) \bar G= \sum_{i=1}^m \sum_{j=1}^n \Re M_{ij}\langle x_i,y_j\rangle +\Re\overline{M_{ij}}\overline{\langle x_i,y_j\rangle}=2 \Re\sum_{i=1}^m \sum_{j=1}^n M_{ij}\langle x_i,y_j\rangle
\end{eqnarray*}
(To deduce the last equality we used \eqref{tracecomrel}.)  
Lemma  \ref{sapdquatcon} yields that $\bar G\in\mathbb{S}^{m+n}_+$ iff and only if $\bar G\in\mathbb{S}^{m+n}_+$.  As the diagonal entries of $\bar G$ are $1$ we deduce that $\bar G\in \mathbb{G}^{m+n}(\mathbb{H})$ if and only if $G\in \mathbb{G}^{m+n}(\mathbb{H})$.
Use \eqref{quatgrotnorm} to deduce the first part of the characterization of $\|M\|_{G,\mathbb{H}}$.
Since $A(M),G\in \mathbb{S}^{m+n}(\mathbb{H})$ we have 
\begin{eqnarray*}
&&M=M_1+M_2\mathrm{j}, \;A(M)=A_1+A_2\mathrm{j}=\begin{bmatrix} 0 &M_1\\ M_1^*&0\end{bmatrix} +\begin{bmatrix} 0 &M_2\\ -M_2^\tp&0\end{bmatrix}\mathrm{j}, \;G=G_1+G_2\mathrm{j},\\ 
&&M_1,M_2\in \mathbb{C}^{m\times n}, \quad G_1\in \mathbb{S}^{m+n}(\mathbb{C}),\quad
G_2\in \mathbb{C}^{(m+n)\times (m+n)},\quad  G_2^\tp =-G_2. 
\end{eqnarray*}
Hence
\begin{eqnarray*}
\bar G=\bar G_1 -G_2\mathrm{j}, \quad \Re\tr A(M)\bar G=\Re(A_1\bar G_1+A_2 \bar G_2).
\end{eqnarray*}
Observe next
\begin{eqnarray*}
\hat C(A(M))=\begin{bmatrix}A_1 &A_2\\-\bar A_2& \bar A_1\end{bmatrix}, \quad \hat C(\bar G)=\begin{bmatrix}\bar G_1 &-G_2\\\bar G_2& G_1\end{bmatrix},\\ 
\tr \hat C(A(M))\hat C(\bar G))=\tr (A_1\bar G_1 +A_2\bar G_2 +\bar A_2 G_2 +\bar A_1 G_1)=2 \Re (A_1\bar G_1+A_2 \bar G_2).
\end{eqnarray*}
As $\bar G\in\mathbb{G}^{m+n}(\mathbb{H})$ we deduce that $\hat C(\bar G)\in \mathcal{C}^{2(m+n)}$.  Vice versa, if $H\in \mathcal{C}^{2(m+n)}$ then $H=\widehat {C(\bar G)}$ for some $\bar G\in\mathbb{G}^{m+n}(\mathbb{H})$.
  Hence $G$ is a quaternion correlation matrix.  This proves the lemma.
\end{proof}

\subsection{The sign function for quaternions}
We define our sign function over $ \mathbb{F}$ 
\begin{equation}\label{eq:sign}
\sign z= \begin{cases}
z/\lvert z \rvert & z\ne 0,\\
0 & z = 0.
\end{cases}
\end{equation} 
Denote by $\mathrm{S}^3=\{a\in\mathbb{H}, \; |a|=1\}$, the $3$-dimensional sphere in $\mathbb{R}^4$.  Note that multiplication by $\phi_a(b)= ab$, and $\psi_a(b)= ba$ are orientation preserving orthogonal transformation on $\mathbb{H}$ for a fixed $a\in\mathrm{S}^3$ and $b\in\mathbb{H}$.  In particular $\phi_a(\mathrm{S}^3)=\psi_a(\mathrm{S}^3)=\mathrm{S}^3$.

On $\mathrm{S}^3$ let $d\sigma$ be the Haar measure on $\mathrm{S}^3$, which is invariant under the action of $\phi_a, \psi_b$.  We now give the following generalization of Haagerup formula \cite{Haagerup} for $\sign(z)$ for quaternions:
\begin{lemma}\label{sgnforquat}
Let $z\in\mathbb{H}$.  Then
\begin{equation}\label{sgnforquat1}
\sign(z)=\frac{3}{8\pi}\int_{w\in\mathrm{S^3}} \sign(\Re (\bar wz))wd\sigma(w).
\end{equation}
\end{lemma}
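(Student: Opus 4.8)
The plan is to strip away the symmetries of $\mathrm{S}^3$ until only a one-dimensional integral remains. First, observe that both sides of \eqref{sgnforquat1} are unchanged under $z \mapsto \lambda z$ for real $\lambda > 0$: the left side because $\sign$ depends only on $z/|z|$, and the right side because $\sign(\Re(\bar w \lambda z)) = \sign(\lambda\Re(\bar w z)) = \sign(\Re(\bar w z))$. The case $z = 0$ is trivial, since then every integrand vanishes. Hence it suffices to verify \eqref{sgnforquat1} for $z = a \in \mathrm{S}^3$, i.e. to show $a = \frac{3}{8\pi}\int_{\mathrm{S}^3}\sign(\Re(\bar w a))\,w\,d\sigma(w)$.

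Next I would use the left-invariance of $d\sigma$ under $\phi_a$. Substituting $w = a v$ with $v\in\mathrm{S}^3$ and using $\overline{av}\,a = \bar v\bar a a = \bar v$, the integrand becomes $\sign(\Re(\bar v))\,av = \sign(v_0)\,av$, where $v_0 = \Re v$; pulling the constant $a$ out on the left, the right side equals $\frac{3}{8\pi}\,a\int_{\mathrm{S}^3}\sign(v_0)\,v\,d\sigma(v)$. Since $a$ is invertible, the whole identity reduces to the single scalar statement
\[
\int_{\mathrm{S}^3}\sign(v_0)\,v\,d\sigma(v) = \frac{8\pi}{3}.
\]
For the three imaginary components of this vector-valued integral, apply the reflection $v_1 \mapsto -v_1$ (an element of $O(4)$, hence $d\sigma$-preserving), which fixes $\sign(v_0)$ and negates $v_1$; this forces $\int_{\mathrm{S}^3}\sign(v_0)\,v_1\,d\sigma = 0$, and similarly for $v_2$ and $v_3$. (The locus $v_0 = 0$ is a great $2$-sphere, of $d\sigma$-measure zero, so the value $\sign(0) = 0$ there is immaterial.) Thus only the real part survives and we are left with $\int_{\mathrm{S}^3}\sign(v_0)v_0\,d\sigma = \int_{\mathrm{S}^3}|v_0|\,d\sigma$.

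Finally I would evaluate $\int_{\mathrm{S}^3}|v_0|\,d\sigma$ in polar coordinates: writing $v_0 = \cos\psi$ with $\psi \in [0,\pi]$, the slice at colatitude $\psi$ is a $2$-sphere of radius $\sin\psi$, so $d\sigma = \sin^2\psi\,d\psi\,d\omega$ with $d\omega$ the surface measure of the unit $S^2$. (A consistency check, $\int_0^\pi\sin^2\psi\,d\psi\cdot 4\pi = 2\pi^2$, pins down that $d\sigma$ here is the unnormalized surface measure, the normalization for which $\frac{3}{8\pi}$ is correct.) Then
\[
\int_{\mathrm{S}^3}|v_0|\,d\sigma = 4\pi\int_0^\pi|\cos\psi|\sin^2\psi\,d\psi = 8\pi\int_0^{\pi/2}\cos\psi\sin^2\psi\,d\psi = \frac{8\pi}{3},
\]
which is exactly what is needed. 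There is no analytic difficulty here; the only thing requiring care is the bookkeeping of constants — verifying the normalization of $d\sigma$ and the coordinate expression $\sin^2\psi\,d\psi\,d\omega$ — since all of the real content is carried by the left-translation invariance of the Haar measure together with the coordinate reflections.
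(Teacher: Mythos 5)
Your proposal is correct and follows essentially the same route as the paper's proof: reduce to unit $z$ by positive homogeneity, use invariance of $\sigma$ under left multiplication by a unit quaternion to reduce to $z=1$, kill the imaginary components by a measure-preserving reflection (the paper uses $w\mapsto\bar w$ where you use individual sign flips), and compute $\int_{\mathrm{S}^3}|w_0|\,d\sigma=\frac{8\pi}{3}$ in spherical coordinates. The only added value in your write-up is the explicit check that $\sigma$ is the unnormalized surface measure of total mass $2\pi^2$, which the paper leaves implicit.
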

\begin{proof} Clearly for $z=0$ \eqref{sgnforquat1} trivially holds.  We next assume that 
$z=1$.  Hence the left hand side of \eqref{sgnforquat1} is $1$.  Let $w=w_0+w_1\mathrm{i}+w_2\mathrm{j}+w_3\mathrm{k}\in\mathrm{S^3}$.
Then $\Re(w^*)=w_0$ and $\sign(\Re(\bar w))=\sign(w_0)$.  We first observe that
\[\int_{w\in\mathrm{S^3}}\sign(w_0) w_jd\sigma(w)=0 \textrm{ for } j\in[3].\]
This follows from the observation that the transformation $w\mapsto \bar w$ is Haar measure preserving on $\mathrm{S^3}$.  Hence  
\[\int_{w\in\mathrm{S^3}}\sign(w_0) w_jd\sigma(w)=- \int_{w\in\mathrm{S^3}}\sign(w_0) w_jd\sigma(w), \quad j\in[3].\] 
Observe next that  $\sign(\Re(\bar w))w_0=\sign (w_0) w_0=|w_0|$.
Thus we need to show that 
$\int_{w\in\mathrm{S^3}}|w_0|d\sigma(w)=\frac{8\pi}{3}$.
We now introduce the spherical coordinates on $\mathbb{R}^4$ as follows:
\begin{align*}
&w_0=\cos (\phi_0), w_1=\sin (\phi_0)\cos (\phi_1), w_2=\sin (\phi_0)\sin(\phi_1) \cos(\phi_2),\\
&w_3=\sin(\phi_0)\sin(\phi_1)\sin(\phi_2), d\sigma=\sin^2(\phi_0)\sin(\phi_1)d\phi_0 d\phi_1d\phi_2,\\
&\phi_0,\phi_1\in[0,\pi],\phi_2\in[0,2\pi].\label{spheric2}
\end{align*}
Hence 
\[\int_{w\in\mathrm{S^3}}|w_0|d\sigma(w)=\big(\int_{0}^{\pi}|\cos(\phi_0)|\sin^2(\phi_0)d\phi_0\big)\big(\int_{0}^{\pi}\sin(\phi_1)d\phi_1\big)\big(\int_0^{2\pi}d\phi_2
\big)=\frac{8\pi}{3}.\]
Hence \eqref{sgnforquat1} holds for $z=1$.  For a general $z\ne 0$ we recall that $\sign(z)=\sign(tz)$ for any $t>0$.  Hence it is ehough to show \eqref{sgnforquat1} for $z\in \mathrm{S^3}$.  That is we need to show the equality
\[z=\frac{3}{8\pi}\int_{w\in\mathrm{S^3}} \sign(\Re (\bar w z))wd\sigma(w), \quad z\in \mathrm{S^3}\]
By multiplying by $\bar z$ from the left it is enough to show that 
\[1=\frac{3}{8\pi}\int_{w\in\mathrm{S^3}} \sign(\Re (\bar w z))\bar z wd\sigma(w).\]
Now introduce a new variable $u=\phi_{\bar z}(w)=\bar z w$ on $\mathrm{S^3}$ .  Note that $\bar u=\bar w z$.  Since the Haar measure on $\mathrm{S^3}$ is invariant under $\phi_{\bar z}$ we get that $d\sigma (w)=d\sigma(u)$.  Hence the above equality is equivalent to \eqref{sgnforquat1} for $z=1$, which was proved.
\end{proof}
\subsection{Quaternion Gaussian}\label{subsec:quatgaus}
Recall the distribution of $G_n^\mathbb{R}(z)$ and $G_n^\mathbb{C}(z)$
given \cite{Haagerup}. The Gassian quaternions $\mathbb{H}^n$ has the distribution
\[G_n^\mathbb{H}(z) =\big(\frac{\pi}{2}\big)^{-2n} \exp(-2\|z\|_2^2).\]
The variance chosen here is totally arbitrary.

\begin{theorem}\label{uvfuvfor}  Assume that $u,v\in\mathcal{H}$ are of norm one, where $\mathcal {H}$ is a right vector space over $\mathbb{H}$.  $m(z)$ is the Lebesgue measure in $\mathbb{H}^n$, then
\begin{align*}\label{mainforuvfuv}
\int_{\mathbb{H}^n}\sign\langle u,z\rangle\sign\langle z,v \rangle G_n^\mathbb{H}(z)dm(z) &=\langle u,v\rangle f_{\mathbb{H}}(|\langle u,v\rangle|)\\
&= \langle u,v\rangle \frac{3}{2}\int_0^{\frac{\pi}{2}}\frac{\cos^4 t}{\sqrt{1-|\langle u,v\rangle|^2 \sin ^2 t}}dt.
\end{align*}
If $\langle u,v\rangle$ is real, then
\begin{align*}
\int_{\mathbb{H}^n}\sign\langle z,u\rangle\sign\langle v,z \rangle G_n^\mathbb{H}(z)dm(z) &=\langle u,v\rangle f_{\mathbb{H}}(|\langle u,v\rangle|)\\
&= \langle u,v\rangle \frac{3}{2}\int_0^{\frac{\pi}{2}}\frac{\cos^4 t}{\sqrt{1-|\langle u,v\rangle|^2 \sin ^2 t}}dt.
\end{align*}
\end{theorem}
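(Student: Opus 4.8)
The plan is to reduce the $\mathbb{H}^n$-integral to a two-variable Gaussian integral, substitute the Haagerup-type formula \eqref{sgnforquat1} for each of the two sign factors, identify the resulting inner integral through the classical (real) Grothendieck identity, and finish with an explicit trigonometric evaluation. First I would reduce to $\mathbb{H}^2$. Since $\langle u,z\rangle$ depends on $z$ only through its component along $u$, and $\langle z,v\rangle$ only through the components along $\mathrm{span}(u,v)$, I would run the Gram--Schmidt process (Lemma~\ref{GSP}) to obtain an orthonormal pair $y_1,y_2$ with $u=y_1$ and $v=y_1\rho+y_2\sigma$, where $\rho:=\langle u,v\rangle$ and, after multiplying $y_2$ by a suitable unit quaternion, $\sigma:=\sqrt{1-|\rho|^2}\ge 0$ is real. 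Extending to an orthonormal basis gives a quaternionic unitary change of coordinates; it acts on $\mathbb{H}^n\cong\mathbb{R}^{4n}$ as a real orthogonal map, hence preserves $dm$ and $\|z\|$ and so leaves $G_n^\mathbb{H}$ invariant. In the new coordinates $\langle u,z\rangle=z_1$ and $\langle z,v\rangle=\overline{z_1}\rho+\overline{z_2}\sigma$, and since $G_n^\mathbb{H}=\prod_i G_1^\mathbb{H}(z_i)$ with each $G_1^\mathbb{H}$ of total mass $1$, integrating out $z_3,\dots,z_n$ leaves
\[
\int_{\mathbb{H}^2}\sign(z_1)\,\sign(\overline{z_1}\rho+\overline{z_2}\sigma)\,G_2^\mathbb{H}(z_1,z_2)\,dm .
\]
If $\rho=0$ this vanishes (odd integrand), matching $\rho f_{\mathbb{H}}(0)=0$, so assume $\rho\ne 0$.

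Next, substituting \eqref{sgnforquat1} into each factor (with variables $w,w'\in\mathrm{S}^3$), pulling out the real scalars $\sign(\Re(\cdot))$, and applying Fubini (everything is bounded and the measures finite), the integral becomes $\bigl(\tfrac{3}{8\pi}\bigr)^2$ times
\[
\int_{\mathrm{S}^3}\!\int_{\mathrm{S}^3}\Bigl(\int_{\mathbb{H}^2}\sign\bigl(\Re(\overline{w}z_1)\bigr)\sign\bigl(\Re(\overline{w'}(\overline{z_1}\rho+\overline{z_2}\sigma))\bigr)G_2^\mathbb{H}\,dm\Bigr)\,w w'\,d\sigma(w)\,d\sigma(w') .
\]
The maps $z\mapsto\Re(\overline{w}z_1)$ and $z\mapsto\Re(\overline{w'}(\overline{z_1}\rho+\overline{z_2}\sigma))$ are real-linear functionals on $\mathbb{H}^2\cong\mathbb{R}^8$ whose gradients have unit Euclidean norm (using $|w|=|w'|=1$ and $|\rho|^2+\sigma^2=1$) and Euclidean inner product $\Re(\overline{w}\rho\overline{w'})$, so by the Grothendieck identity for an isotropic Gaussian the inner integral equals $\tfrac{2}{\pi}\arcsin\bigl(\Re(\overline{w}\rho\overline{w'})\bigr)$. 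Substituting $w'\mapsto\overline{w}w''$, which is $d\sigma$-preserving on $\mathrm{S}^3$, turns $w w'$ into $w''$ and, by \eqref{tracecomrel}, turns $\Re(\overline{w}\rho\overline{w'})$ into $\Re(\rho\overline{w''})$; the $w$-integral is then trivial and, using $\int_{\mathrm{S}^3}d\sigma=2\pi^2$, the whole expression collapses to
\[
\frac{9}{16\pi}\int_{\mathrm{S}^3}\arcsin\bigl(\Re(\rho\overline{w})\bigr)\,w\,d\sigma(w) .
\]

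To evaluate this last integral, write $\rho=|\rho|\sign(\rho)$ and substitute $w\mapsto\sign(\rho)\,w$: this pulls $\sign(\rho)$ out on the left and (by \eqref{tracecomrel}) replaces $\Re(\rho\overline{w})$ by $|\rho|\Re(\overline{w})=|\rho|w_0$. The $\mathrm{i},\mathrm{j},\mathrm{k}$-components of $\int_{\mathrm{S}^3}\arcsin(|\rho|w_0)\,w\,d\sigma(w)$ vanish under the reflections $w_k\mapsto-w_k$ ($k=1,2,3$), so only the scalar $c:=\int_{\mathrm{S}^3}\arcsin(|\rho|w_0)\,w_0\,d\sigma(w)$ survives. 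In the spherical coordinates of Lemma~\ref{sgnforquat}, $c=4\pi\int_0^{\pi}\arcsin(|\rho|\cos\phi)\cos\phi\sin^2\phi\,d\phi=8\pi\int_0^{\pi/2}\arcsin(|\rho|\cos\phi)\cos\phi\sin^2\phi\,d\phi$; integrating by parts with antiderivative $\tfrac13\sin^3\phi$ and then substituting $\phi\mapsto\tfrac{\pi}{2}-t$ gives $c=\tfrac{8\pi|\rho|}{3}\int_0^{\pi/2}\tfrac{\cos^4 t}{\sqrt{1-|\rho|^2\sin^2 t}}\,dt$. Hence the integral equals $\sign(\rho)\cdot\tfrac{9}{16\pi}\cdot c=\rho\cdot\tfrac{3}{2}\int_0^{\pi/2}\tfrac{\cos^4 t}{\sqrt{1-|\rho|^2\sin^2 t}}\,dt=\langle u,v\rangle f_{\mathbb{H}}(|\langle u,v\rangle|)$, as claimed.

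For the second identity, when $\rho$ is real the reduction gives $\langle z,u\rangle=\overline{z_1}$ and $\langle v,z\rangle=\rho z_1+\sigma z_2$, and the same chain of steps goes through verbatim; alternatively one can deduce it from the first by averaging over the $d\sigma$-preserving substitutions $z\mapsto zc$, $c\in\mathrm{S}^3$, using $\tfrac1{2\pi^2}\int_{\mathrm{S}^3}\overline{c}\,q\,c\,d\sigma(c)=\Re(q)$, \eqref{tracecomrel}, and the identity $\sign\langle z,u\rangle\sign\langle v,z\rangle=\overline{\sign\langle z,v\rangle\sign\langle u,z\rangle}$. The bulk of the work — and the one place to be careful — is bookkeeping: keeping the noncommutative factors $w,w'$ in the correct order throughout, and verifying that the constants from \eqref{sgnforquat1}, from $\int_{\mathrm{S}^3}d\sigma=2\pi^2$, and from the Gaussian combine to give precisely $\tfrac32$.
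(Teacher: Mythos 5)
Your proposal is correct and follows essentially the same route as the paper's proof: reduce to two quaternionic coordinates via Gram--Schmidt, apply the sign formula of Lemma~\ref{sgnforquat} to each factor, invoke the real Grothendieck identity to evaluate the inner Gaussian integral as $\tfrac{2}{\pi}\arcsin$ of the gradients' inner product, collapse the double $\mathrm{S}^3$-integral to a single one by a Haar-measure-preserving substitution, and finish in spherical coordinates with integration by parts. The only (cosmetic) differences are that you work in explicit coordinates and handle non-real $\langle u,v\rangle$ by the substitution $w\mapsto\sign(\rho)w$ at the final single-sphere integral, whereas the paper first treats the real case and then rotates $u$ by a unit quaternion; the constants check out in both.
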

\begin{proof}  
We will prove the first formula, the proof of second formula is the same. First we claim it is enough to prove this formula for $n=2$.
Indeed, Since span $(u,v)$ is at most two dimensional, by performing Gram-Schmidt orthogonalization process we can find an orthonormal basis in $\mathbb{F}^n$ such that $u,v\in$ span $(e_1,e_2)$. $\mathbb{F}^{n}$ can be decomposed as span $(e_1,e_2)$ and its complement. By the correspond decomposition of the Gaussian variable, its integration on the complement dimension is simply 1.

Recall the famous Grothendieck’s Identity: for any fixed real vectors $u,v$ of norm 1, we have
\[
\int_{\mathbb{R}^n} \sign\langle x,u\rangle \sign\langle x,v\rangle G_n^\mathbb{R}(x) dx = \frac{2}{\pi} \arcsin \langle u,v\rangle.
\]

For $u,v,z \in \mathbb{H}^n$, donote their real vector form as $u,v,z \in \mathbb{R}^{4n}$. Then $\langle u,v\rangle = \Re \langle u,v\rangle$ and the Grothendieck’s Identity becomes:
\[
\int_{\mathbb{H}^n} \sign \Re \langle u,z\rangle \sign \Re \langle z,v\rangle G_n^\mathbb{H}(z) dm(z) = \frac{2}{\pi} \arcsin \Re \langle u,v\rangle.
\]

By Lemma~\ref{sgnforquat}, we have
\begin{align*}
&\int_{\mathbb{H}^n}\sign\langle u,z\rangle \sign \langle z,v \rangle G_n^\mathbb{H}(z)dm(z)\\
= &\frac{9}{64\pi^2}\int_{w_1,w_2 \in\mathrm{S^3}}\int_{z \in \mathbb{H}^n} \sign(\Re (\bar w_1\langle u,z\rangle))w_1 \sign(\Re (\bar w_2\langle z,v\rangle))w_2 d\sigma(w_1)d\sigma(w_2) G_n^\mathbb{H}(z) dm(z)\\
= &\frac{9}{32\pi^3}\int_{w_1,w_2 \in\mathrm{S^3}}\arcsin(\Re (\langle uw_1,v\bar w_2\rangle))w_1w_2 d\sigma(w_1)d\sigma(w_2)
\end{align*}

(1) Assume now $\langle u,v\rangle = a \in \mathbb{R}$, then $\Re \langle u w_1, v\bar w_2\rangle=a\Re(\bar w_1\bar w_2) = a\Re(w_1 w_2)$.
Thus we deduce 
\begin{align*}
\langle u,v\rangle f_{\mathbb{H}}(|\langle u,v\rangle|)&=\frac{9}{32\pi^3}\int_{w_1,w_2\in\textrm{S}^3} \arcsin(|\langle u,v\rangle|\Re (w_1 w_2))w_1 w_2d\sigma(w_1)d\sigma(w_2)\\
&=\frac{9}{32\pi^3}\int_{w_1,w_2\in\textrm{S}^3} \arcsin(|\langle u,v\rangle|\Re (w_1 (w_1^{-1}w_2))w_1 (w_1^{-1}w_2) d\sigma(w_1)d\sigma(w_2)\\
&=\frac{9}{16\pi}\int_{q\in\textrm{S}^3} \arcsin(|\langle u,v\rangle|\Re (q)) q  d\sigma(q)
\end{align*}
The second equality is due to the fact that $d\sigma(w_2)$ is a Haar measure. The third equality is due to the fact that the volume of $\textrm{S}^3$ is equal to  $2\pi^2$.

Observe that the left hand side of this equality is real.  Hence
\begin{equation}\notag
\langle u,v\rangle f_{\mathbb{H}}(|\langle u,v\rangle|)=\frac{9}{16\pi}\int_{q\in\textrm{S}^3} \arcsin(|\langle u,v\rangle|\Re (q))\Re(q)d\sigma(q).
\end{equation}
Now use the spherical coordinates as before to deduce that 
\begin{align*}
\langle u,v\rangle f_{\mathbb{H}}(|\langle u,v\rangle|)&=\frac{9}{4}\int_0^\pi \arcsin(|\langle u,v\rangle| \cos\phi_0)\cos\phi_0 \sin^2\phi_0d\phi_0\\
&= \frac{9}{2}\int_0^{\frac{\pi}{2}} \arcsin(|\langle u,v\rangle| \cos\phi_0)\cos\phi_0 \sin^2\phi_0d\phi_0\\
&=\frac{9}{2}\int_0^{\frac{\pi}{2}} \arcsin(|\langle u,v\rangle| \sin t)\sin t  \cos^2 t\,dt.
\end{align*}
Finally do the integration by part to get
\[
\langle u,v\rangle f_{\mathbb{H}}(|\langle u,v\rangle|)=\frac{3|\langle u,v\rangle|}{2}\int_0^{\frac{\pi}{2}}\frac{\cos^4 t}{\sqrt{1-|\langle u,v\rangle|^2 \sin ^2 t}}dt.
\]
Thus
\begin{equation}\label{fHform}
f_{\mathbb{H}}(|\langle u,v\rangle|)=
\frac{3}{2}\int_0^{\frac{\pi}{2}}\frac{\cos^4 t}{\sqrt{1-|\langle u,v\rangle|^2 \sin ^2 t}}dt.
\end{equation}

(2) If $\langle u,v\rangle$  is not real, then there is a norm 1 quaternion $c$ such that $\langle u c,v\rangle \in \mathbb{R}$.
\begin{align*}
&\int_{\mathbb{H}^n}\sign\langle u,z\rangle \sign \langle z,v \rangle G_n^\mathbb{H}(z)dm(z)\\
= &\int_{\mathbb{H}^n}c\sign\langle u c,z\rangle \sign \langle z,v \rangle G_n^\mathbb{H}(z)dm(z)\\
= &c \frac{3\langle cu,v\rangle}{2}\int_0^{\frac{\pi}{2}}\frac{\cos^4 t}{\sqrt{1-|\langle cu,v\rangle|^2 \sin ^2 t}}dt\\
= &\frac{3\langle u,v\rangle}{2}\int_0^{\frac{\pi}{2}}\frac{\cos^4 t}{\sqrt{1-|\langle u,v\rangle|^2 \sin ^2 t}}dt
\end{align*}
\end{proof}
\subsection{The function $p(x)$}
Define a function over the open \emph{quaternion} unit disk $D_{\mathbb{H}}=\{z\in\mathbb{H}, |z|< 1\}$ by
\begin{equation}\label{defP(z)}
P(z) \coloneqq \frac{3z}{2}\int_0^{\pi / 2}\frac {\cos^4t}{(1-|z|^2\sin^2t)^{1/2}}\,dt, \qquad z\in D_{\mathbb{H}} , 
\end{equation}
and the function $p(x)$ as the restriction of $P$ to $(-1,1) \subseteq \mathbb{R}$. Note that $p^\prime (x)>0$ on $(-1,1)$, $p(-1)=-1$ and $p(1)=1$. Hence $p:[-1,1]\to [-1,1]$ is a strictly increasing continuous bijection. Since $[-1,1]$ is compact, $p$ is a homeomorphism of $[-1,1]$ onto itself. By the Taylor expansion
\begin{gather*}
(1-x^2\sin^2t)^{-1 /2}=\sum_{k=0}^{\infty}\frac{(2k-1)!!}{(2k)!!}x^{2k}\sin^{2k}t, \qquad |x|\leq1,\; 0\leq t< \pi/ 2,
\shortintertext{and}
\int_0^{\pi / 2}\cos^4t\sin^{2k}t\,dt=\frac{3\pi}{2}\cdot\frac{(2k-1)!!}{(2k+4)!!}, 
\end{gather*}
we get
\begin{equation}\label{taylorforp}
p(x)=\sum_{k=0}^{\infty}\frac{9\pi}{16(k+1)(k+2)}\biggl[\frac{(2k-1)!!}{(2k)!!}\biggr]^2x^{2k+1}, \qquad x\in[-1,1].
\end{equation}
Let $p_{\ell}(x)=x\; \Hypergeometric{2}{1}{\frac{1}{2}, \frac{1}{2}}{\ell}{x^2}$ be the function introduced in \S\ref{sec:intro}.
A straightforward calculation shows that $p(x)=\frac{9\pi}{32}p_{3}(x)$.

This observation coincides with the formula \cite[(3.2)]{Bri2}.  Namely $\mathcal{E}_4(z)=p(x)$.  More general $\mathcal{E}_{2d}(z)=C_{2d}p_{d+1}(x)$, where $C_{2d}=(1/d)\bigl(\Gamma((2d+1)/2)/\Gamma(d)\bigr)^2$.  Thus, whenever the inverse function of $p_{\ell}(x)$ has first Taylor coefficient positive and all other nonpositive one can improve the value of the Grothendieck constants $K_{G, 2(\ell-1)}^{\mathbf{R}}$ as in \cite{FL20}.

Compare above $p(x)$ with the real Haagerup function 
\begin{equation}\label{hfuncfor}
h(x)=\sum_{k=0}^{\infty}\frac{\pi}{4(k+1)}\biggl[\frac{(2k-1)!!}{(2k)!!}\biggr]^2x^{2k+1}, \qquad x\in[-1,1].
\end{equation}
Observe that $h(x)=\frac{\pi}{4}p_{2}(x)$.
Note that
\[(x^3 p(x))'=\frac{3}{2}x^2h(x).\]

As $\biggl[\frac{(2k-1)!!}{(2k)!!}\biggr]<1$ It follows that 
\[\sum_{k=0}^{\infty}\frac{9\pi}{16(k+1)(k+2)}\biggl[\frac{(2k-1)!!}{(2k)!!}\biggr]<\sum_{k=0}^{\infty}\frac{9\pi}{16(k+1)(k+2)}=\frac{9\pi}{16}\sum_{k=0}^{\infty}(\frac{1}{k+1}-\frac{1}{k+2})=\frac{9\pi}{16}.\]
Therefore the power series for $p(x)$ (or $P(z)$) converge uniformly to a continuous function on the closed quaternion unit disk $\bar D_{\mathbb{H}}$.  Note that $p(z)$ is analytic in the open complex unit disk $D=\{z\in\mathbb{C}, |z|<1\}$.  Use the ratio test for the coefficients to see that the radius of convergence of the series for $p(z)$ is $r=1$.  Since the Taylor coefficients  of $p(z)$ are nonnegative, Pringsheim's theorem yield that $z=1$ is a singular point.  As $p(-z)=-p(z)$ it follows that $z=-1$ is also singular point.
As $p'(0) >0$ it follows that $p(z)$ has an inverse analytic function in some disc $D(r)=\{z\in\mathbb{C}, |z|<r\}$.  So 
\begin{equation}\label{invexpansionp}
p^{-1}(z)=\sum_{k=0}^{\infty}c_{2k+1}z^{2k+1},\qquad z\in \mathbb{C}, |z|<r, 0<r \le 1.
\end{equation}
The reason that $r\le 1$ is because $1$ is the singular point of $p(z)$. The coefficients $c_{2k+1}$ are given by the Lagrange inversion formula:
\begin{equation}\label{inversion_formula}
c_{2k+1}=\frac{1}{(2k+1)!}\lim_{t\to 0}\biggl[\frac{d^{2k}}{dt^{2k}}\left(\frac{t}{p(t)}\right)^{2k+1}\biggr].
\end{equation}
Since $p'(x)>0$ for $x\in(-1,1)$ the function $p^{-1}(z)$ is an analytic function in some simply connected domain containing $(-1,1)$.   

Assume that $p(z)=w$.  Then $z=p^{-1}(w)$.  As $p_3(z)=\frac{32}{9\pi}w$ it follows that $z=p_3^{-1}(\frac{32}{9\pi}w)=p^{-1}(w)$.  Hence the Taylor coefficients of $p^{-1}(w)$ and $p^{-1}_3(w)$ have the same signs.

\subsection{Haagerup's method}  In this subsection we will try to apply methods in \cite{Haagerup} to show that $c_{2k+1}<0$ for $k>0$.
We first show that as in \cite[Lemma 2.2]{Haagerup} that $p(z)$ can be extended to a continuous function  $p^{+}(z) $ in the closed upper half plane $\mathbb{C}^+=\{z\in\mathbb{C}, \Im (z)\ge 0\}$ which is analytic in the open upper half plane $\mathbb{C}^+_o=\{z\in\mathbb{C},\Im (z)>0\}$.  
Recall that 
\begin{equation}\label{defpxint}
p(x)=\frac{9}{2}\int _0^{\frac{\pi}{2}} \sin t \cos^2 t \arcsin (x \sin t) dt, \quad -1\le x \le 1.
\end{equation}
The analytic function $\sin z$ is a bijection of $[-\frac{\pi}{2},\frac{\pi}{2}]\times [0,\infty)$ onto the closed upper half plane.  Let $\arcsin^+ z: \mathbb{C}^+\to [-\frac{\pi}{2},\frac{\pi}{2}]\times [0,\infty)$. Note that 
\begin{align*}
\arcsin^+ x&=\arcsin x \textrm{ for } -1\le x\le 1,\\
\arcsin^{+} x&=\frac{\pi}{2} +\textbf{i}\,\arccosh x  \textrm{ for } x\ge 1,\\
\arcsin^{+} x&=-\frac{\pi}{2} +\textbf{i}\,\arccosh(-x)  \textrm{ for } x\le -1. 
\end{align*}
Furthermore $\arcsin ^{+} z$ is analytic in $\mathbb{C}_o^+$.  Hence we can define
\begin{equation}\label{defp+}
p^{+}(z)=\frac{9}{2}\int_0^{\frac{\pi}{2}} \sin t \cos^2 t \arcsin^{+}(z\sin t)dt.
\end{equation}
\begin{lemma}\label{Halem2.2}
The function $p(x)$ given by \eqref{defpxint} has an analytic extension to $\mathbb{C}^+_o$ given \eqref{defp+}.  Furthemore $p^+(z)$ is continuous on $\mathbb{C}^+$.  Its value for $x>1$ is given by the formulas:
\begin{gather*} 
\Re (p^{+}(x))=\frac{9}{2}(\int_{0}^{\sin t=\frac{1}{x}}\sin t \cos^2 t \arcsin(x\sin t)\,dt
+\int_{\sin t=\frac{1}{x}}^{\frac{\pi}{2}}\sin t \cos^2 t \frac{\pi}{2}dt)=
\frac{3}{2}\int_0^{\sin t=\frac{1}{x}}\frac{x\cos^4 t\, dt}{\sqrt{1-x^2\sin^2 t}},\\
\Im (p^{+}(x))=\frac{9}{2}\int_{\sin t=\frac{1}{x}}^{\frac{\pi}{2}}\sin t \cos^2 t \arccos (x\sin t)\,dt=\frac{3}{2}\int_{\sin t=\frac{1}{x}}^{\frac{\pi}{2}} \frac{x\cos^4 t\, dt}{\sqrt{x^2\sin^2 t-1}}.
\end{gather*}
Similar equalities hold for $x<-1$.
Furthermore
\begin{equation}\label{expforReIMp}
\Re (p^{+}(x))=\frac{3}{2}\int_0^{\frac{\pi}{2}} (1-x^{-2}\sin^2 u)^{\frac{3}{2}}\,du, \,\Im (p^{+}(x))=\frac{3}{2}(1-x^{-2})^2\int_0^{\frac{\pi}{2}} \frac{\sin^4 v}{\sqrt{1-(1-x^{-2})\sin^2 v}}\, dv
\end{equation}

\end{lemma}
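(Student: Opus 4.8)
The plan is to lean on the analytic facts set up just before the lemma — that $\sin$ is a conformal bijection of the half-strip $[-\tfrac\pi2,\tfrac\pi2]\times[0,\infty)$ onto $\mathbb{C}^+$, and that $\arcsin^+$ is its inverse, analytic on $\mathbb{C}^+_o$, continuous on $\mathbb{C}^+$, with the listed boundary values — and to argue in three stages. First I would check that \eqref{defp+} defines an analytic function on $\mathbb{C}^+_o$ that extends $p$ and is continuous up to $\mathbb{R}$: for $z\in\mathbb{C}^+_o$ and $t\in(0,\tfrac\pi2)$ one has $z\sin t\in\mathbb{C}^+_o$, so the integrand $\sin t\cos^2 t\,\arcsin^+(z\sin t)$ of \eqref{defp+} is analytic in $z$; and for $z$ in a compact $K\subset\mathbb{C}^+$ the points $z\sin t$ stay in a fixed compact subset of $\mathbb{C}^+$ on which the continuous $\arcsin^+$ is bounded, so the integrand is dominated — uniformly in $z\in K$ — by an $L^1[0,\tfrac\pi2]$ function. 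Differentiating under the integral sign (or Morera plus Fubini) then gives analyticity of $p^+$ on $\mathbb{C}^+_o$; since $\arcsin^+(x\sin t)=\arcsin(x\sin t)$ on $(-1,1)$, $p^+$ restricts to $p$ there, so \eqref{defp+} is the analytic continuation of \eqref{defpxint}. Continuity of $p^+$ on all of $\mathbb{C}^+$ follows by bounded convergence, since $z_n\to z_0$ in $\mathbb{C}^+$ forces $\arcsin^+(z_n\sin t)\to\arcsin^+(z_0\sin t)$ pointwise in $t$ under a uniform bound.

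Second, I would compute the boundary values for $x>1$. Fix $x>1$ and put $a=\arcsin(1/x)\in(0,\tfrac\pi2)$, so $x\sin t\le1$ exactly for $t\le a$. Passing $z\to x$ inside \eqref{defp+} and using the boundary values of $\arcsin^+$, namely $\arcsin^+(x\sin t)=\arcsin(x\sin t)$ for $t\le a$ and $\arcsin^+(x\sin t)=\tfrac\pi2+\textbf{i}\,\arccosh(x\sin t)$ for $t\ge a$, then separating real and imaginary parts, yields the first pair of displayed formulas in the statement (the imaginary part being the $\arccosh$ integral). For the closed forms I would integrate by parts in each piece with $dv=\sin t\cos^2 t\,dt$, $v=-\tfrac13\cos^3 t$, using $\tfrac{d}{dt}\arcsin(x\sin t)=x\cos t/\sqrt{1-x^2\sin^2 t}$ and $\tfrac{d}{dt}\arccosh(x\sin t)=x\cos t/\sqrt{x^2\sin^2 t-1}$: in the real part the boundary term of the $[0,a]$-piece at $t=a$ is $-\tfrac{\pi}{6}\cos^3 a$, exactly cancelled by $\tfrac{\pi}{2}\int_a^{\pi/2}\sin t\cos^2 t\,dt=\tfrac{\pi}{6}\cos^3 a$ (the contribution of the constant $\tfrac\pi2$ on $[a,\tfrac\pi2]$); in the imaginary part the endpoint terms vanish since $\arccosh(x\sin a)=\arccosh 1=0$ and $\cos\tfrac\pi2=0$. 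This produces
\[
\Re p^+(x)=\frac32\int_0^{a}\frac{x\cos^4 t}{\sqrt{1-x^2\sin^2 t}}\,dt,\qquad \Im p^+(x)=\frac32\int_{a}^{\pi/2}\frac{x\cos^4 t}{\sqrt{x^2\sin^2 t-1}}\,dt .
\]
The case $x<-1$ needs no extra work: $\arcsin^+(-\bar w)=-\overline{\arcsin^+(w)}$ gives $p^+(-\bar z)=-\overline{p^+(z)}$, so $\Re p^+$ is odd and $\Im p^+$ even, and the $x<-1$ formulas follow.

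Third, to reach the closed forms \eqref{expforReIMp} I would substitute $\sin u=x\sin t$ in $\Re p^+(x)$ (mapping $[0,a]$ onto $[0,\tfrac\pi2]$, with $\cos u\,du=x\cos t\,dt$, $\sqrt{1-x^2\sin^2 t}=\cos u$, $\cos^2 t=1-x^{-2}\sin^2 u$), whereupon the integrand becomes $(1-x^{-2}\sin^2 u)^{3/2}\,du$ and $\Re p^+(x)=\tfrac32\int_0^{\pi/2}(1-x^{-2}\sin^2 u)^{3/2}\,du$; and substitute $x^2\sin^2 t-1=(x^2-1)\sin^2 v$ in $\Im p^+(x)$ (mapping $[a,\tfrac\pi2]$ onto $[0,\tfrac\pi2]$, with $\cos^2 t=\tfrac{x^2-1}{x^2}\cos^2 v$), which after simplifying $dt$ and $\sqrt{x^2\sin^2 t-1}$ gives $\Im p^+(x)=\tfrac32\tfrac{(x^2-1)^2}{x^3}\int_0^{\pi/2}\tfrac{\cos^4 v}{\sqrt{1+(x^2-1)\sin^2 v}}\,dv$; replacing $v$ by $\tfrac\pi2-v$ and using $1+(x^2-1)\cos^2 v=x^2\bigl(1-(1-x^{-2})\sin^2 v\bigr)$ turns this into $\tfrac32(1-x^{-2})^2\int_0^{\pi/2}\tfrac{\sin^4 v}{\sqrt{1-(1-x^{-2})\sin^2 v}}\,dv$, i.e.\ \eqref{expforReIMp}. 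The main obstacle here is not conceptual: the only delicate point is the passage to the boundary above, identifying $\lim_{z\to x}p^+(z)$ with the integral of the pointwise boundary values of $\arcsin^+(z\sin t)$, which rests on the uniform bound in the first step and is unaffected by the corner of $\arcsin^+$ at $\pm1$, where $\arcsin^+$ stays continuous. The fiddliest computation is the change of variables $x^2\sin^2 t-1=(x^2-1)\sin^2 v$ followed by the reflection $v\mapsto\tfrac\pi2-v$, which must be tracked carefully so the powers of $x$ and $(x^2-1)$ collapse to the exact constant $\tfrac32(1-x^{-2})^2$ in \eqref{expforReIMp}.
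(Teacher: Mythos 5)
Your proposal is correct and follows essentially the same route as the paper: the paper likewise defers the analyticity/continuity of $p^+$ to Haagerup's argument, then obtains the closed forms by integrating by parts with $v=-\tfrac13\cos^3 t$ and by the substitutions $\sin u=x\sin t$ and $\sin v=\cos t/\sqrt{1-x^{-2}}$ (your substitution $x^2\sin^2t-1=(x^2-1)\sin^2v$ is the same one up to the reflection $v\mapsto\tfrac\pi2-v$ that you carry out at the end). Note that in doing so you have tacitly corrected the paper's typo $\arccos$ to $\arccosh$ in the formula for $\Im(p^+(x))$, which is the version consistent with the stated boundary values of $\arcsin^+$ and with the final integral.
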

\begin{proof} The arguments of the proof are the same as Haagerup plus the following modification.  First recall that $(\cos^3 t)'=-3\sin t\cos^2 t$.   Hence
\begin{gather*}
3\int_{0}^{\sin t=\frac{1}{x}}\sin t \cos^2 t \arcsin(x\sin t)dt=-\cos^3 t\arcsin(x\sin t)|_{t=0}^{\sin t=\frac{1}{x}}+\int_0^{\sin t=\frac{1}{x}}\frac{x\cos^4 t\, dt}{\sqrt{1-x^2\sin^2 t}},\\
3\int_{\sin t=\frac{1}{x}}^{\frac{\pi}{2}}\sin t \cos^2 t \frac{\pi}{2}dt=-\frac{\pi}{2}\cos^3 t|_{\sin t=\frac{1}{x}}^{\frac{\pi}{2}}.
\end{gather*}
These equalities show the first identity for $\Re (p^{+}(x))$.
Use the same integration by part for the first identity for $\Im (p^{+}(x))$.

 For identities  \eqref{expforReIMp} use the same substitutions as in \cite[page 205]{Haagerup}.  For the expression  $\Re (p^{+}(x))$ use $\sin u=x\sin t$.  Since in the integrant we have $\cos^4 t=\cos^2 t \cos^2 t$ we need to multiply the integrant of Haagerup by $\cos^2 t=1-\sin ^2 t=1-x^{-2}\sin^2 u$, which gives the factor $(1-x^{-2}\sin^2 u)^{\frac{3}{2}}$.
 
 For the expression  $\Im (p^{+}(x))$ use the substitution $\sin v=\frac{\cos t}{\sqrt{1-x^{-2}} }$.  Again $\cos^2 t=(1-x^{-2})\sin^2 v$.
\end{proof}
\begin{lemma}\label{expansp+} We have the following series expansions for $x\ge 1$: 
\begin{eqnarray}\label{expanspsi1}
\psi_1(x)=\Re (p^+(x))=\frac{9\pi}{16}\big( \frac{4}{3}-x^{-2}+\sum_{k=2}^{\infty} \frac{(2k-5)!!(2k-1)!!}{2^{2k-2}(k!)^2} x^{-2k}\big),\\
\label{expanspsi2}
\psi_2(x)=\Im (p^+(x))=\frac{3\pi}{16}\sum_{k=0}^{\infty}\frac{(2k-1)!!(2k+3)!!}{ 2^{2k}k!(k+2)! } (1-x^{-2})^{k+2}.
\end{eqnarray}
Furthermore the functions $\psi_1$ and $\psi_2$ strictly increase on for $x\ge 1$.
\end{lemma}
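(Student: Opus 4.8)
The plan is to derive \eqref{expanspsi1} and \eqref{expanspsi2} from the integral representations \eqref{expforReIMp} of Lemma~\ref{Halem2.2} by binomial expansion followed by term-by-term integration, and to read off the monotonicity directly from those same integrals. For the real part put $t=x^{-2}\in(0,1]$ and expand
\[
(1-t\sin^2 u)^{3/2}=\sum_{k=0}^{\infty}\binom{3/2}{k}(-1)^k t^k\sin^{2k}u .
\]
Since the exponent $3/2$ is positive, $\sum_{k\ge0}\bigl|\binom{3/2}{k}\bigr|<\infty$, so this series converges absolutely and uniformly in $(t,u)\in[0,1]\times[0,\pi/2]$ and may be integrated over $u$ term by term using the Wallis integral $\int_0^{\pi/2}\sin^{2k}u\,du=\tfrac{\pi}{2}\tfrac{(2k-1)!!}{(2k)!!}$. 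A direct computation gives $\binom{3/2}{0}=1$, $\binom{3/2}{1}=\tfrac32$, and $\binom{3/2}{k}(-1)^k=\dfrac{3\,(2k-5)!!}{2^k k!}$ for $k\ge 2$ (with $(-1)!!=1$). After multiplying by the prefactor $\tfrac32$ and writing $(2k)!!=2^kk!$, the $k=0,1$ terms collapse to $\tfrac{9\pi}{16}\bigl(\tfrac43-x^{-2}\bigr)$ and, for $k\ge2$, the $k$-th term becomes $\tfrac{9\pi}{16}\cdot\tfrac{(2k-5)!!(2k-1)!!}{2^{2k-2}(k!)^2}x^{-2k}$; this is exactly \eqref{expanspsi1}.

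For the imaginary part put $s=1-x^{-2}\in[0,1)$ and expand
\[
(1-s\sin^2 v)^{-1/2}=\sum_{k=0}^{\infty}\frac{(2k-1)!!}{(2k)!!}s^k\sin^{2k}v ;
\]
for each fixed $x\ge1$ one has $s\sin^2 v\le s<1$ (and $s=0$ if $x=1$), so the series converges uniformly in $v\in[0,\pi/2]$. Multiplying by $\sin^4 v$, integrating term by term via $\int_0^{\pi/2}\sin^{2k+4}v\,dv=\tfrac{\pi}{2}\tfrac{(2k+3)!!}{(2k+4)!!}$, then multiplying by the prefactor $\tfrac32(1-x^{-2})^2=\tfrac32 s^2$ and reindexing, I obtain a power series in $s$ with lowest term of order $s^2$. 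Using $(2k)!!=2^kk!$ and $(2k+4)!!=2^{k+2}(k+2)!$ and pulling out $\tfrac{3\pi}{16}$ yields \eqref{expanspsi2}.

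For the monotonicity I argue straight from \eqref{expforReIMp}. In the first integral, for each $u\in(0,\pi/2]$ the map $x\mapsto 1-x^{-2}\sin^2u$ is strictly increasing on $[1,\infty)$ and $y\mapsto y^{3/2}$ is strictly increasing on $[0,\infty)$, so the integrand is non-decreasing in $x$ and strictly increasing on a set of positive measure; hence $\psi_1$ is strictly increasing. For $\psi_2$, write it as $\tfrac32\int_0^{\pi/2}s^2\sin^4 v\,(1-s\sin^2 v)^{-1/2}\,dv$ with $s=1-x^{-2}$; since $s$ strictly increases on $[1,\infty)$ and both $s^2$ and $(1-s\sin^2 v)^{-1/2}$ increase with $s$, the integrand increases with $x$, strictly for $v\ne0$, so $\psi_2$ is strictly increasing. (Equivalently, \eqref{expanspsi2} exhibits $\psi_2$ as a power series with positive coefficients in the increasing variable $1-x^{-2}$.)

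The only delicate points are the double-factorial bookkeeping that rewrites $\binom{3/2}{k}(-1)^k$ in the form $3(2k-5)!!/(2^kk!)$---in particular honouring the conventions $(-1)!!=1$ and $1!!=1$ for the affected terms $k=2,3$---and the justification of the interchange of summation and integration, which for $\psi_2$ rests on the fact that $s<1$ for each fixed $x>1$. Neither is a genuine obstacle, and the two consistency checks $\psi_1(1)=\tfrac32\int_0^{\pi/2}\cos^3u\,du=1$ and $\psi_2(1)=0$ are respected by the stated series. Thus the lemma reduces to a careful application of the binomial theorem together with Wallis' formula.
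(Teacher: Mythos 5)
Your proof is correct and follows essentially the same route as the paper: expand $(1-t)^{3/2}$ and $(1-t)^{-1/2}$ by the binomial series, integrate term by term with Wallis' formula $\int_0^{\pi/2}\sin^{2n}u\,du=\frac{(2n-1)!!}{2^{n+1}n!}\pi$, and read off monotonicity from the integrands in \eqref{expforReIMp}. You supply more detail than the paper (explicit coefficient bookkeeping and justification of the interchange), but there is no substantive difference in method.
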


\begin{proof} First, we use the following Taylor expansions:
\begin{eqnarray*}
(1-t)^{\frac{3}{2}}=1-\frac{3}{2}t +3\sum_{k=2}^{\infty} \frac{(2k-5)!!}{2^k k!}t^k, \quad (1-t)^{-\frac{1}{2}}=\sum_{k=0}^{\infty}\frac{(2k-1)!!}{2^k k!} t^k.
\end{eqnarray*}
Second,  we use the formula $\int_0^{\frac{\pi}{2}} \sin^{2n} u du=\frac{(2n-1)!!}{2^{n+1}n!}\pi$, where $(-1)!!=1$.

To show that $\psi_1(x)$ strictly increases observe that $1-x^{-2}\sin^2 u$ strictly increasing for $x\ge 1$.   To show that $\psi_2(x)$  striclty increases for $x\ge 1$ observe that the functions $(1-x^{-2})$ and $\frac{1}{1-(1-x^{-2})\sin^2 u} $ strictly increase for $x\ge 1$.
\end{proof}

Use \eqref{defp+} and the arguments of \cite{Haagerup}[Lemmas 2.3 and  2.4] to deduce:
\begin{lemma}\label{lemma2.3H} 
\begin{enumerate}
\item $\Im (p^+(z))\ge \Im (p^+(|z|))$ for $|z|\ge 1$ , $\Im (z)\ge 0$.
\item $p^+(z)$ has no zero in $\mathbb{C}^+$ except $z=0$. 
\end{enumerate}
\end{lemma}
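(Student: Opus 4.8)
The plan is to deduce both parts from elementary properties of the branch $\arcsin^+$ together with the integral representation \eqref{defp+}, following Haagerup's Lemmas~2.3 and~2.4.

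\emph{Part (1).} Since the weight $\tfrac{9}{2}\sin t\cos^2 t$ in \eqref{defp+} is nonnegative on $[0,\tfrac{\pi}{2}]$, it suffices to establish the pointwise estimate $\Im\arcsin^+(z\sin t)\ge\Im\arcsin^+(|z|\sin t)$ for each $t$ and then integrate. Fix $t$, put $s=\sin t\in[0,1]$ and $w=zs$, so $|w|=|z|s$. If $|z|s<1$ then $\Im\arcsin^+(|z|s)=0$ (the argument is a real number in $[-1,1]$), while $\Im\arcsin^+(zs)\ge0$ because $\arcsin^+$ maps $\mathbb{C}^+$ into the half-strip $[-\tfrac{\pi}{2},\tfrac{\pi}{2}]\times[0,\infty)$; the inequality is then automatic. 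If $|z|s\ge1$ then $\Im\arcsin^+(|z|s)=\arccosh(|z|s)=\arccosh|w|$, and the key point is
\[
\Im\arcsin^+(w)\ \ge\ \arccosh|w|,\qquad w\in\mathbb{C}^+,\ |w|\ge 1 .
\]
To prove this, write $w=\sin\zeta$ with $\zeta=\xi+\mathrm{i}\eta$, $\xi\in[-\tfrac{\pi}{2},\tfrac{\pi}{2}]$, $\eta\ge0$, so that $\zeta=\arcsin^+(w)$ and $\eta=\Im\arcsin^+(w)$. A direct computation gives $|w|^{2}=|\sin\zeta|^{2}=\cosh^{2}\eta-\cos^{2}\xi\le\cosh^{2}\eta$, hence $\cosh\eta\ge|w|$ and therefore $\eta\ge\arccosh|w|$. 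Integrating the pointwise inequality against $\tfrac{9}{2}\sin t\cos^2 t$ yields $\Im p^+(z)\ge\Im p^+(|z|)$.

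\emph{Part (2).} I would split $\mathbb{C}^+$ into the three cases $\Im z>0$, $z\in(-1,1)$, and $z$ real with $|z|\ge1$. If $\Im z>0$, then for $t\in(0,\tfrac{\pi}{2})$ the point $z\sin t$ lies in the open upper half-plane, hence $\Im\arcsin^+(z\sin t)>0$; since $\tfrac{9}{2}\sin t\cos^2 t>0$ there as well, \eqref{defp+} gives $\Im p^+(z)>0$, so $p^+(z)\ne0$. If $z=x\in(-1,1)$, then $p^+(x)=p(x)$ is strictly increasing with $p(0)=0$, so $p^+(x)=0$ only for $x=0$. If $z=x\ge1$, the first identity in \eqref{expforReIMp} gives $\Re p^+(x)=\tfrac{3}{2}\int_0^{\pi/2}(1-x^{-2}\sin^2 u)^{3/2}\,du>0$ (the integrand is nonnegative and vanishes at most on a null set), so $p^+(x)\ne0$; the case $x\le-1$ follows from the reflection identity $p^+(-\bar z)=-\overline{p^+(z)}$, which is immediate from $\arcsin^+(-\bar w)=-\overline{\arcsin^+ w}$ (read off from $w=\sin\zeta$), and yields $\Re p^+(x)=-\Re p^+(-x)<0$. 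These three cases exhaust $\mathbb{C}^+$, so $p^+$ has no zero in $\mathbb{C}^+$ other than $z=0$.

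\emph{Main obstacle.} The only step requiring a genuine idea is the bound $\Im\arcsin^+(w)\ge\arccosh|w|$ in Part~(1); everything else is bookkeeping with \eqref{defp+} and the explicit real/imaginary part formulas from Lemma~\ref{Halem2.2}. That bound is disposed of cleanly by the identity $|\sin\zeta|^{2}=\cosh^{2}(\Im\zeta)-\cos^{2}(\Re\zeta)$, so I do not anticipate serious difficulty; the only point needing attention is keeping track of which boundary values $\arcsin^+$ takes on the real axis when checking the cases $|z|\ge1$.
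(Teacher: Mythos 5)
Your proof is correct, and it follows exactly the route the paper intends: the paper gives no details but instructs the reader to use the integral representation \eqref{defp+} together with the arguments of Haagerup's Lemmas 2.3 and 2.4, which is precisely what you carry out (the identity $|\sin\zeta|^2=\cosh^2(\Im\zeta)-\cos^2(\Re\zeta)$ yielding $\Im\arcsin^+(w)\ge\arccosh|w|$ is the heart of Haagerup's Lemma 2.3). Both the pointwise estimate in Part (1) and the three-case analysis in Part (2), including the reflection identity $p^+(-\bar z)=-\overline{p^+(z)}$, check out.
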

\begin{lemma}\label{lemma2.4H}  Assume that the Taylor series of $p^{-1}(x)$ are given by \eqref{invexpansionp}. Let $\alpha>1$.  Then for an odd positive integer $n$ we have 
\[
c_n =\frac{2}{\pi n}\int_{1}^{\alpha}\Im (p^{+}(x)^{-n})\,dx +r_n(\alpha), \textrm{ where } |r_n(\alpha)|\le \frac{\alpha}{n}(\Im p^+(\alpha))^{-n}.
\]
\end{lemma}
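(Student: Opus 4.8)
The plan is to follow Haagerup's argument in \cite{Haagerup} (Lemmas~2.3--2.4), adapted to the present function $p$. \emph{A contour formula for $c_n$.} Since $p$ is analytic with $p(0)=0$ and $p'(0)=9\pi/16\neq0$, it is biholomorphic near $0$, so the Cauchy formula $c_n=\frac{1}{2\pi i}\oint_{|z|=\rho}p^{-1}(z)z^{-n-1}\,dz$ (valid for small $\rho$ by \eqref{invexpansionp}) transforms, under the substitution $z=p(w)$, into $c_n=\frac{1}{2\pi i}\oint_{\gamma}w\,p'(w)\,p(w)^{-n-1}\,dw$ over a small loop $\gamma$ around $0$; since $p(w)^{-n}$ is meromorphic on $D$ with its only pole at $0$, we may take $\gamma=\{|w|=\epsilon\}$ for a fixed $\epsilon\in(0,1)$. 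Writing $p'(w)\,p(w)^{-n-1}=-\frac{1}{n}\frac{d}{dw}\,p(w)^{-n}$ and integrating by parts (the boundary term vanishes on a closed loop) gives
\[
c_n=\frac{1}{2\pi i n}\oint_{|w|=\epsilon}p(w)^{-n}\,dw,
\]
which is \eqref{inversion_formula} in a form suited to contour deformation.

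\emph{Deforming to the real axis.} By Lemma~\ref{Halem2.2}, $p$ extends to $p^{+}$, analytic on $\mathbb{C}^{+}_o$ and continuous on $\mathbb{C}^{+}$; put $p^{-}(z):=\overline{p^{+}(\bar z)}$ on the lower half-plane. These coincide with $p$ (and with each other) on $(-1,1)$ but disagree across the cuts $[1,\infty)$ and $(-\infty,-1]$. By Lemma~\ref{lemma2.3H}(2) (and conjugation) $p^{\pm}$ vanishes only at $0$, and $p^{\pm}(1)=1\neq0$, $p^{\pm}(-1)=-1\neq0$, so $p^{\pm}(w)^{-n}$ is analytic on $\mathbb{C}^{\pm}_o$ and continuous up to the real axis off $0$ — in particular no indentation is needed at $w=\pm1$. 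Split $\{|w|=\epsilon\}$ into its upper and lower semicircles. On the closure of $\{\epsilon<|w|<\alpha,\ \Im w>0\}$ the integrand $p^{+}(w)^{-n}$ is analytic and continuous, so Cauchy's theorem replaces the upper semicircle by the real segments $[\epsilon,\alpha]$ and $[-\alpha,-\epsilon]$ together with the large semicircle $\{|w|=\alpha,\ \Im w\ge0\}$; the lower semicircle is treated symmetrically using $p^{-}$.

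\emph{Assembling and estimating the remainder.} Adding the two contributions, the real-axis integrals over $[\epsilon,1]$ and $[-1,-\epsilon]$ cancel (there $p^{+}=p^{-}=p$), while over $[1,\alpha]$ and $[-\alpha,-1]$ the integrand jumps by $p^{+}(x)^{-n}-p^{-}(x)^{-n}=2i\,\Im\!\big(p^{+}(x)^{-n}\big)$. The $\arcsin^{+}$ formulas of Lemma~\ref{Halem2.2} give $p^{+}(-x)=-\overline{p^{+}(x)}$ for real $x$, hence (as $n$ is odd) $p^{+}(-x)^{-n}=-\overline{p^{+}(x)^{-n}}$ and $\Im\!\big(p^{+}(-x)^{-n}\big)=\Im\!\big(p^{+}(x)^{-n}\big)$; so the integral over $[-\alpha,-1]$ equals that over $[1,\alpha]$, and the real-axis part yields $\frac{2}{\pi n}\int_{1}^{\alpha}\Im\!\big(p^{+}(x)^{-n}\big)\,dx$. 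What is left over is
\[
r_n(\alpha)=\frac{1}{2\pi i n}\Bigl(\int_{|w|=\alpha,\ \Im w\ge0}p^{+}(w)^{-n}\,dw+\int_{|w|=\alpha,\ \Im w\le0}p^{-}(w)^{-n}\,dw\Bigr).
\]
Each arc has length $\pi\alpha$, and on it $|p^{\pm}(w)|\ge|\Im p^{+}(w)|\ge\Im p^{+}(\alpha)>0$ by Lemma~\ref{lemma2.3H}(1) (using $|p^{-}(w)|=|p^{+}(\bar w)|$); the length-times-supremum estimate then gives $|r_n(\alpha)|\le\frac{1}{2\pi n}\cdot 2\pi\alpha\,(\Im p^{+}(\alpha))^{-n}=\frac{\alpha}{n}(\Im p^{+}(\alpha))^{-n}$.

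I expect the main obstacle to be the contour-deformation bookkeeping in the second step: checking that $p^{\pm}(w)^{-n}$ is genuinely analytic on the relevant open half-disks and continuous up to the real boundary (that the only singular behaviour is the branch cut on $\{|x|\ge1\}$, and nothing goes wrong at $w=\pm1$ or on $|w|=\alpha$), and keeping orientations consistent so that the jumps of $p^{\pm}(x)^{-n}$ across $(1,\alpha)$ and $(-\alpha,-1)$ recombine with the correct sign and constant.
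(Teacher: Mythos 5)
Your argument is correct and is precisely the contour-deformation proof that the paper invokes by reference to Haagerup's Lemmas 2.3--2.4: the residue/Lagrange formula rewritten as $c_n=\frac{1}{2\pi i n}\oint p(w)^{-n}dw$, deformation of the two semicircles to the boundary of the half-annuli using $p^{\pm}$, cancellation on $(-1,1)$, the jump $2i\,\Im(p^+(x)^{-n})$ across the cuts together with the odd symmetry $p^+(-x)=-\overline{p^+(x)}$, and the arc estimate via Lemma~\ref{lemma2.3H}(1). The only cosmetic omission is that for the deformation you need $p^{+}=p$ on the whole open upper half-disk (not just on $(-1,1)$), which follows by Schwarz reflection or directly from the integral representation \eqref{defp+}; otherwise the bookkeeping, orientations, and the constant $\frac{2}{\pi n}$ all check out.
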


 We now  imitate the steps in the proof of Haagerup for nonpositivity of Taylor series of $h(z)$ for $k\ge 2$.  We will do that without relying on the complete elliptic integrals, only using Lemmas \ref{Halem2.2} and \ref{expansp+}.
 We first start with the following Lemma:
 \begin{lemma}\label{argincreas}  The ration $\frac{\psi_2(x)}{\psi_1(x)}$ strictly increases for $x\ge 1$.
 \end{lemma}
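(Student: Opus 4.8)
The plan is to show that $\psi_2/\psi_1$ is increasing on $[1,\infty)$ by reparametrizing with the variable $s = 1 - x^{-2} \in [0,1)$, which is itself strictly increasing in $x$, so it suffices to show $\psi_2/\psi_1$ is increasing as a function of $s$. Under this substitution the series from Lemma~\ref{expansp+} become, up to the positive constants $\tfrac{9\pi}{16}$ and $\tfrac{3\pi}{16}$, power series in $s$ with \emph{nonnegative} coefficients: writing $\psi_1(x) = \tfrac{9\pi}{16}\,\phi_1(s)$ and $\psi_2(x) = \tfrac{3\pi}{16}\,\phi_2(s)$, one has
\[
\phi_1(s) = \tfrac{1}{3} + \sum_{k=1}^{\infty} a_k s^k, \qquad \phi_2(s) = \sum_{k=2}^{\infty} b_k s^k,
\]
where $a_k, b_k \ge 0$ for all $k$ (this is read off directly from \eqref{expanspsi1}--\eqref{expanspsi2} after expanding $x^{-2k} = (1-s)^k$ and collecting terms, or more cleanly from the integral representations \eqref{expforReIMp}). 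Then $\tfrac{d}{ds}(\phi_2/\phi_1) > 0$ is equivalent to $\phi_2' \phi_1 - \phi_2 \phi_1' > 0$, i.e. the Wronskian-type quantity is positive.

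The key step is to prove $\phi_2'\phi_1 - \phi_2\phi_1' > 0$ on $(0,1)$. I would use the standard fact that if $f(s) = \sum f_k s^k$ and $g(s) = \sum g_k s^k$ have nonnegative coefficients and the sequence of ratios $g_k/f_k$ is (weakly) increasing in $k$, then $g/f$ is increasing on the interval of convergence — this is a classical lemma on ratios of power series (essentially because $g(s)f'(s) - g'(s)f(s) = \sum_{j<k}(k-j)(f_j g_k - f_k g_j)s^{j+k-1}$, and each coefficient $f_j g_k - f_k g_j \ge 0$ when $g_k/f_k \ge g_j/f_j$). Applying this with $f = \phi_1$, $g = \phi_2$: I need the ratios $b_k / a_k$ (with $a_0 = 1/3$, $b_0 = b_1 = 0$) to be nondecreasing in $k$. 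The main obstacle is verifying this monotonicity of the coefficient ratios, which reduces to an inequality among products of double factorials: one must show
\[
\frac{(2k-1)!!(2k+3)!!/(2^{2k}k!(k+2)!)}{(2k-5)!!(2k-1)!!/(2^{2k-2}(k!)^2)}
= \frac{(2k+3)!!\,(k!)}{4\,(2k-5)!!\,(k+2)!}
\]
is increasing in $k$ for $k \ge 2$ (with the $k=0,1$ and the constant term handled separately, noting $b_0/a_0 = b_1/a_1 = 0$ is the smallest value). This is a ratio-of-consecutive-terms computation: form the quotient of the $(k{+}1)$st expression to the $k$th, simplify the double-factorial and factorial ratios to a rational function of $k$, and check it exceeds $1$ for $k \ge 2$ — a routine but slightly delicate estimate.

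An alternative I would keep in reserve, in case the coefficient-ratio monotonicity is awkward at small indices, is to work directly from the integral formulas \eqref{expforReIMp}: write $\psi_1(x) = \tfrac{3}{2}\int_0^{\pi/2}(1 - x^{-2}\sin^2 u)^{3/2}\,du$ and $\psi_2(x) = \tfrac{3}{2}s^2 \int_0^{\pi/2} \sin^4 v\,(1 - s\sin^2 v)^{-1/2}\,dv$ with $s = 1-x^{-2}$, and differentiate under the integral sign in $s$; the sign of $\psi_2'\psi_1 - \psi_2\psi_1'$ then follows from a Chebyshev/correlation-inequality argument comparing the "weights" $(1-s\sin^2 v)^{-1/2}$ against $(1-s\sin^2 u)^{3/2}$, since increasing $s$ shifts mass toward larger $\sin^2$ in both integrals but does so faster in $\psi_2$. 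Either route gives strict monotonicity; I expect the power-series route to be cleanest to write, with the double-factorial inequality as the one genuine computation.
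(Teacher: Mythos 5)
Your main route has a genuine gap at its foundation: $\psi_1$ is \emph{not} a power series in $s=1-x^{-2}$ with nonnegative coefficients; it is not even analytic in $s$ at $s=0$. The expansion \eqref{expanspsi1} is a series in $x^{-2}=1-s$ whose coefficients $c_k=\tfrac{(2k-5)!!(2k-1)!!}{2^{2k-2}(k!)^2}$ decay only like $\tfrac{1}{\pi k^3}$, so when you ``expand $x^{-2k}=(1-s)^k$ and collect terms'' the coefficient of $s^j$ is $(-1)^j\sum_{k\ge j}c_k\binom{k}{j}$, which already diverges for $j=2$ (it behaves like $\sum_k \tfrac{1}{2\pi k}$). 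Equivalently, $\psi_1=\tfrac32\int_0^{\pi/2}(\cos^2u+s\sin^2u)^{3/2}\,du$ has an $s^2\log s$ term in its expansion at $s=0$ — the paper records the same obstruction in Appendix B, noting that $\psi_1''$ blows up at $x=1$. Moreover, the coefficient ratio you display divides the coefficient of $(1-x^{-2})^{k+2}$ in \eqref{expanspsi2} by the coefficient of $x^{-2k}$ in \eqref{expanspsi1}; these are coefficients of different monomials in different variables, so the monotone-coefficient-ratio lemma cannot be invoked on them. In the one variable ($x^{-2}$) in which $\psi_1$ genuinely is analytic, its linear coefficient is $-1$, violating the nonnegativity hypothesis of that lemma in any case, and $\psi_2$ has no convergent nonnegative expansion in $x^{-2}$ on all of $[1,\infty)$ since $\psi_2(\infty)=\infty$.

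Your reserve route is closer in spirit to what actually works, but as stated it is only a hope: numerator and denominator are integrals of different integrands over different variables, so there is no single measure against which to run a Chebyshev/correlation argument. The missing step is an algebraic redistribution of the factor $(1-x^{-2})^{3/2}$ between the two integrals:
\[
\frac{\psi_2(x)}{\psi_1(x)}
=\frac{(1-x^{-2})^{1/2}\int_0^{\pi/2}\sin^4v\,\bigl(1-(1-x^{-2})\sin^2v\bigr)^{-1/2}\,dv}
{\int_0^{\pi/2}\bigl[(1-x^{-2})^{-1}(1-x^{-2}\sin^2u)\bigr]^{3/2}\,du}.
\]
The numerator is a product of two positive functions each strictly increasing in $x$ (exactly the monotonicity already established in Lemma \ref{expansp+}), while the denominator's integrand has base
\[
\frac{1-x^{-2}\sin^2u}{1-x^{-2}}=\frac{x^2-\sin^2u}{x^2-1}=1+\frac{\cos^2u}{x^2-1},
\]
which is strictly decreasing in $x$ for $x>1$ pointwise in $u$. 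An increasing positive numerator over a decreasing positive denominator gives the claim with no series manipulation at all; without this rearrangement neither of your two routes closes.
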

\begin{proof} Let
 \[\frac{\psi_2(x)}{\psi_1(x)}=\frac{(1-x^{-2})^2\int_0^{\frac{\pi}{2}} \frac{\sin^4 v}{\sqrt{1-(1-x^{-2})\sin^2 v}}}{\int_0^{\frac{\pi}{2}} (1-x^{-2}\sin^2 u)^{\frac{3}{2}}\,du}=
 \frac{(1-x^{-2})^{\frac{1}{2}}\int_0^{\frac{\pi}{2}} \frac{\sin^4 v}{\sqrt{1-(1-x^{-2})\sin^2 v}}}{\int_0^{\frac{\pi}{2}} [(1-x^{-2})^{-1}(1-x^{-2}\sin^2 u)]^{\frac{3}{2}}\,du}
 \]
 The proof of Lemma \ref{expansp+} yields that the numerator of the last expression strictly increases for $x\ge 1$.  Thus it is enough to show that the denominator of the last expression strictly decreases for $x\ge 1$.   This would follow from the claim that 
 \[\frac{1-(1-x^{-2})\sin^2 u}{1-x^{-2}}=\cos^2 u +\frac{1}{x^2-1}\]
 strictly decreasing.  This is obvious from the last last expression.
 \end{proof}
 Clearly 
 \[\psi_1(1)=1,\; \psi_1(\infty)=\frac{3\pi}{4}, \quad \psi_2(1)=0,\;\psi_2(\infty)=\infty.\]
 \begin{corollary}\label{thetaincres}  The complex function $p^{+}$ has the following expression for $x\ge 1$
 \[\theta(x)=\arctan \frac{\psi_2(x)}{\psi_1(x)},\, p^{+}(x)=\psi_1(x)+\mathrm{i}\psi_2(x)=|p^{+}(x)|\, e^{\mathrm{i}\theta(x)}=\sqrt{\psi_1^2 (x)+\psi^2_2(x)}\,e^{\mathrm{i}\theta(x)}.\]
 The function $\theta(x)$ strictly increases for $x\ge 1$, where $\theta(1)=0$ and $\theta(\infty)=\frac{\pi}{2}$.
 \end{corollary}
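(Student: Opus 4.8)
The plan is to read off all three assertions of the corollary directly from Lemma~\ref{argincreas} together with the information on $\psi_1,\psi_2$ already recorded in Lemmas~\ref{Halem2.2} and~\ref{expansp+}.

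First I would set up the polar form. By Lemma~\ref{expansp+}, $\psi_1$ is strictly increasing on $[1,\infty)$ with $\psi_1(1)=1$, so $\psi_1(x)\ge1>0$ there; and the series \eqref{expanspsi2} for $\psi_2$ has all nonnegative terms when $x\ge1$ (since $1-x^{-2}\ge0$), so $\psi_2(x)\ge0$, with $\psi_2(x)>0$ for $x>1$. Hence $p^{+}(x)=\psi_1(x)+\mathrm{i}\psi_2(x)$ lies in the closed first quadrant with nonzero real part, so it has a well-defined argument $\theta(x)=\arctan\bigl(\psi_2(x)/\psi_1(x)\bigr)\in[0,\pi/2)$ and modulus $|p^{+}(x)|=\sqrt{\psi_1^2(x)+\psi_2^2(x)}$, giving $p^{+}(x)=|p^{+}(x)|\,e^{\mathrm{i}\theta(x)}$.

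Next, strict monotonicity of $\theta$ on $[1,\infty)$ is immediate: Lemma~\ref{argincreas} says that $x\mapsto\psi_2(x)/\psi_1(x)$ is strictly increasing there, and $\arctan$ is strictly increasing on $[0,\infty)$, so the composite $\theta$ is strictly increasing. Finally, the boundary values follow from the limits displayed just before the corollary: $\psi_1(1)=1$ and $\psi_2(1)=0$ give $\theta(1)=\arctan 0=0$; and since $\psi_1(\infty)=\tfrac{3\pi}{4}<\infty$ while $\psi_2(\infty)=\infty$, the ratio $\psi_2(x)/\psi_1(x)\to\infty$ as $x\to\infty$, whence $\theta(\infty)=\tfrac{\pi}{2}$.

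I do not expect any genuine obstacle here: the substantive content — that $\arg p^{+}$ increases along the ray $x\ge1$ — is exactly Lemma~\ref{argincreas}, and this corollary is merely its geometric restatement in polar coordinates. The only point needing a word of care is the endpoint $x=1$, where $\psi_2$ vanishes, but $\psi_1(1)=1>0$ keeps the argument well-defined and equal to $0$ there.
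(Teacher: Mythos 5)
Your proposal is correct and matches the paper's intent exactly: the paper states this corollary without a written proof, treating it as an immediate consequence of Lemma~\ref{argincreas} (strict monotonicity of $\psi_2/\psi_1$) together with the displayed values $\psi_1(1)=1$, $\psi_1(\infty)=\tfrac{3\pi}{4}$, $\psi_2(1)=0$, $\psi_2(\infty)=\infty$, which is precisely the argument you spell out. The only content you add beyond the paper is the (correct and worth stating) verification that $\psi_1>0$ and $\psi_2\ge0$ on $[1,\infty)$, which justifies writing the argument as $\arctan(\psi_2/\psi_1)$ in the first place.
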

 The above corollary is the analog of \cite{Haagerup}[Lemma 2.7].
 
 Next we need an analog of Lemma 2.8. We first start with the following lemma .
 \begin{lemma}\label{funcchix}  Let $\chi(x): [0,\frac{\pi}{2})\to [1,\infty)$ be the inverse function of $\theta(x)$.  Then the substitution $x=\chi(y)$ yields:
 \begin{eqnarray*}&&\frac{d\chi}{dy}=\frac{\psi_1^2(\chi(y))+\psi_2^2(\chi(y))}{\psi_2'(\chi(y))\psi_1(\chi(y))-\psi_1'(\chi(y))\psi_2(\chi(y))},\\
 &&|p^{+}(x)|^{-n}dx=|p^{+}(\chi(y))|^{-n}\frac{\psi_1^2(\chi(y))+\psi_2^2(\chi(y))}{\psi_2'(\chi(y))\psi_1(\chi(y))-\psi_1'(\chi(y))\psi_2(\chi(y))}dy=\\
 &&\big(|p^{+}|^{n-2}(\chi(y))(\psi_2'(\chi(y))\psi_1(\chi(y))-\psi_1'(\chi(y))\psi_2(\chi(y)))\big)^{-1}.
 \end{eqnarray*}
 \end{lemma}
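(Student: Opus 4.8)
The plan is to differentiate the defining relation $\theta(x)=\arctan\bigl(\psi_2(x)/\psi_1(x)\bigr)$ from Corollary~\ref{thetaincres} and then invert. Using $(\arctan t)'=1/(1+t^2)$ and the quotient rule one gets
\[
\theta'(x)=\frac{1}{1+\psi_2^2(x)/\psi_1^2(x)}\cdot\frac{\psi_2'(x)\psi_1(x)-\psi_1'(x)\psi_2(x)}{\psi_1^2(x)}=\frac{\psi_2'(x)\psi_1(x)-\psi_1'(x)\psi_2(x)}{\psi_1^2(x)+\psi_2^2(x)}.
\]
By Corollary~\ref{thetaincres}, $\theta$ is a strictly increasing continuous bijection of $[1,\infty)$ onto $[0,\pi/2)$; moreover $\theta$ is $C^1$ with $\theta'>0$ on $(1,\infty)$ (the denominator is positive since $\psi_1^2+\psi_2^2\ge\psi_1^2\ge1$ by Lemma~\ref{expansp+}, and the numerator is positive because $\theta$ strictly increases, as in the proof of Lemma~\ref{argincreas}). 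Hence the inverse function $\chi$ is differentiable with $\chi'(y)=1/\theta'(\chi(y))$, and substituting the displayed formula for $\theta'$ gives the first identity of the lemma at once.

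For the second identity I would perform the change of variables $x=\chi(y)$ in the one-form $|p^{+}(x)|^{-n}\,dx$, so that $dx=\chi'(y)\,dy$ and, by the first identity,
\[
|p^{+}(x)|^{-n}\,dx=|p^{+}(\chi(y))|^{-n}\,\frac{\psi_1^2(\chi(y))+\psi_2^2(\chi(y))}{\psi_2'(\chi(y))\psi_1(\chi(y))-\psi_1'(\chi(y))\psi_2(\chi(y))}\,dy,
\]
which is the middle expression. Finally I would use the polar form $|p^{+}(x)|=\sqrt{\psi_1^2(x)+\psi_2^2(x)}$ recorded in Corollary~\ref{thetaincres} to cancel: $|p^{+}(\chi(y))|^{-n}\bigl(\psi_1^2(\chi(y))+\psi_2^2(\chi(y))\bigr)=|p^{+}(\chi(y))|^{2-n}$, which turns the middle expression into
\[
\bigl(|p^{+}|^{\,n-2}(\chi(y))\,\bigl(\psi_2'(\chi(y))\psi_1(\chi(y))-\psi_1'(\chi(y))\psi_2(\chi(y))\bigr)\bigr)^{-1}\,dy,
\]
the last expression (the factor $dy$ being suppressed in the statement).

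There is essentially no serious obstacle here: the lemma is bookkeeping that prepares the substitution to be applied to the integral $\int_1^{\alpha}\Im\bigl(p^{+}(x)^{-n}\bigr)\,dx$ in Lemma~\ref{lemma2.4H}. The only two points deserving a word of care are that $\chi$ is genuinely differentiable — which rests on the strict monotonicity of $\theta$ from Corollary~\ref{thetaincres} together with $\theta'$ being nonvanishing — and that the denominator $\psi_2'\psi_1-\psi_1'\psi_2$ does not vanish on $[1,\infty)$, so that the reciprocals in the statement make sense; both reduce to $\theta'>0$ and $\psi_1^2+\psi_2^2>0$ on $[1,\infty)$. One may note in passing that $\psi_1'$ and $\psi_2'$ are separately positive there by Lemma~\ref{expansp+}, although this is not needed for the present lemma.
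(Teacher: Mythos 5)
Your proposal is correct and follows essentially the same route as the paper: differentiate $\theta(x)=\arctan(\psi_2(x)/\psi_1(x))$ to get $\theta'=(\psi_2'\psi_1-\psi_1'\psi_2)/(\psi_1^2+\psi_2^2)$, invert to obtain $\chi'$, and then substitute into $|p^{+}(x)|^{-n}\,dx$ using $|p^{+}|^2=\psi_1^2+\psi_2^2$. The extra remarks you add on the differentiability of $\chi$ and the nonvanishing of the denominator are sound and merely make explicit what the paper leaves implicit.
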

 \begin{proof}  As $y=\theta(x)$ is strictly increasing on $[1,\infty)$ it follows that $x=\chi(y)$ is strictly increasing on $[0,\frac{\pi}{2})$.  Clearly
 \[\frac{dy}{dx}=(\arctan\frac{\psi_2(x)}{\psi_1(x)})'=\frac{\psi_2'(x)\psi_1(x)-\psi_1'(x)\psi_2(x)}{\psi_1^2(x)+\psi_2^2(x)}.\]
 This equality implies straightforward the lemma.
 \end{proof}
 The following proposition follows from \eqref{expforReIMp}, the numerical calculations \ref{omegaincrlem} and the last part of Lemma \ref{expansp+}. 
 \begin{proposition}\label{decxpn}  Let $\omega(x)=\psi_2'(x)\psi_1(x)-\psi_1'(x)\psi_2(x)$.  
 Then $\omega(x)$ strictly increases on $[0, \tau]$, where 
 \[\tau \approx 1.732, \,\omega(\tau)\approx 1.360, \, \omega(1)=\omega'(\tau)=0.\]
 Assume that $\omega(x)|p^{+}(x)|^{m}$ is strictly increasing on $[1,\alpha]$ for some $m>0$.  Then for each integer $k\ge 0$ the function $\omega(x)|p^{+}(x)|^{m+k}$ strictly increases on the interval $[1,\alpha]$.
 \end{proposition}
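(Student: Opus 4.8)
\emph{Overall plan.} The proposition has two essentially independent parts, and I would handle them separately: the quantitative facts about $\omega$, and the bootstrap raising the exponent from $m$ to $m+k$. For the first part I would start from $\omega=\psi_2'\psi_1-\psi_1'\psi_2$; differentiating, the cross terms $\pm\psi_1'\psi_2'$ cancel, so $\omega'=\psi_2''\psi_1-\psi_1''\psi_2$, and using \eqref{expforReIMp} and differentiation under the integral sign I would express $\psi_1',\psi_1'',\psi_2',\psi_2''$, hence $\omega$ and $\omega'$, as concrete one-variable integrals over $[0,\pi/2]$. The endpoint identity $\omega(1)=0$ is exact: \eqref{expanspsi2} gives $\psi_2(x)=O\bigl((1-x^{-2})^2\bigr)$ as $x\to1$, so $\psi_2(1)=\psi_2'(1)=0$, while $\psi_1(1)=1$; the strict increase then takes place on $[1,\tau]$ — on $[0,1]$ the function $p^+$ is real-valued, so $\psi_2\equiv\omega\equiv0$ there. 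The assertions that $\omega$ strictly increases on $[1,\tau]$, that $\omega'(\tau)=0$, and the numerical values $\tau\approx1.732$, $\omega(\tau)\approx1.360$ are then exactly what the numerical calculation \ref{omegaincrlem} supplies: one checks that the explicit integral for $\omega'$ is positive on $[1,\tau)$ and has its first zero at $\tau$.

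\emph{The key monotonicity input.} For the second part the only new ingredient I need is that $|p^+|$ is positive and strictly increasing on $[1,\alpha]$. By the last part of Lemma \ref{expansp+}, $\psi_1$ and $\psi_2$ are strictly increasing on $[1,\infty)$; moreover $\psi_1\ge\psi_1(1)=1>0$ and $\psi_2\ge\psi_2(1)=0$ with $\psi_2>0$ for $x>1$. Hence
\[
|p^+(x)|^2=\psi_1(x)^2+\psi_2(x)^2
\]
is a sum of a strictly increasing positive function and a nondecreasing nonnegative one, so it is strictly increasing with value $\ge1$; thus $|p^+|$ is strictly increasing and $\ge1$ on $[1,\alpha]$.

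\emph{The induction.} I would then induct on $k$, the case $k=0$ being the hypothesis that $\omega|p^+|^m$ is strictly increasing on $[1,\alpha]$. Assuming $\omega|p^+|^{m+k}$ is strictly increasing on $[1,\alpha]$: it vanishes at $x=1$ (since $\omega(1)=0$), hence is strictly positive on $(1,\alpha]$, so $\omega\ge0$ on $[1,\alpha]$. Then for $1\le x_1<x_2\le\alpha$,
\[
\omega(x_1)|p^+(x_1)|^{m+k+1}\le\omega(x_1)|p^+(x_1)|^{m+k}\,|p^+(x_2)|<\omega(x_2)|p^+(x_2)|^{m+k}\,|p^+(x_2)|=\omega(x_2)|p^+(x_2)|^{m+k+1},
\]
the first step using $|p^+(x_1)|\le|p^+(x_2)|$ and $\omega(x_1)|p^+(x_1)|^{m+k}\ge0$, and the strict step using strict monotonicity of $\omega|p^+|^{m+k}$ together with $|p^+(x_2)|>0$. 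Thus $\omega|p^+|^{m+k+1}$ is strictly increasing on $[1,\alpha]$, closing the induction.

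\emph{Main obstacle.} The bootstrap is soft and immediate once Lemma \ref{expansp+} is available; the real work is the first part, namely establishing that $\omega'$ has a single sign change on the range of interest and thereby locating $\tau$. That reduces entirely to controlling the sign of one explicitly computable integral in $x$, so the true difficulty is the rigorous numerical estimate, which the paper delegates to \ref{omegaincrlem}; a self-contained argument would need verified enclosures of these integrals (or a direct monotonicity argument for $\omega'$ in the style of the proofs of Lemmas \ref{expansp+} and \ref{argincreas}). A minor point to verify explicitly is the one-sided differentiability of $\psi_2$ at $x=1$ and the vanishing $\psi_2'(1)=0$, both of which follow at once from \eqref{expanspsi1}--\eqref{expanspsi2}.
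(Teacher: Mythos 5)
Your proof is correct and takes essentially the same route as the paper, which gives no written argument beyond the remark that the proposition ``follows from \eqref{expforReIMp}, the numerical calculations \ref{omegaincrlem} and the last part of Lemma \ref{expansp+}'': the quantitative facts about $\omega$ are delegated to the rigorous numerical work in the appendix, and the exponent bootstrap rests on the positivity and monotonicity of $|p^{+}|=\sqrt{\psi_1^2+\psi_2^2}$ exactly as you use them. Your write-up (the explicit induction on $k$, the nonnegativity of $\omega$ from $\omega(1)=0$, and the observation that the strict increase really takes place on $[1,\tau]$ since $\omega\equiv 0$ on $[0,1]$) is in fact more complete than what the paper records.
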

\begin{figure}
\includegraphics[width=10cm]{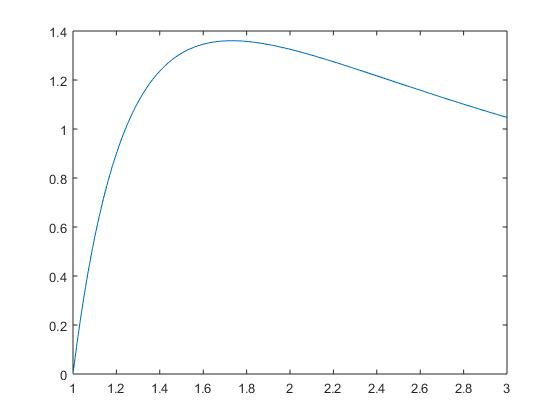}
\caption{Graph of function $\omega(x)$.}
\label{graph_omega}
\includegraphics[width=10cm]{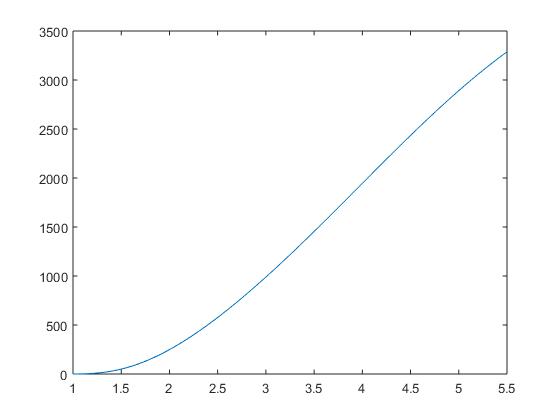}
\caption{Graph of function $\omega(x)|p^{+}(x)|^{7}$.}
\label{graph_omega_p}
\centering
\end{figure} 
Let us choose $\alpha=5$ and 
\[\theta_0=\theta(5)=\arctan\frac{\psi_2(5)}{\psi_1(5)}\approx 0.8097.\]
By Proposition~\ref{condompmincreas}, $\omega(x)|p^{+}(x)|^{7}$ increases in $[1,5]$. We now give an analog of \cite{Haagerup}[Lemma 2.8].

\begin{lemma}\label{HagL2.8}  Let $\alpha= 5$ and $\theta_0=\theta(\alpha)$.  For a fixed $n\in\mathbb{N}$ let $p=\lfloor\frac{n\theta_0}{\pi}\rfloor$.  Set
\[I_r=\frac{2}{\pi n}\int_{\theta(x)=\frac{\pi}{n}(r-1)}^{\theta(x)=\frac{\pi}{n}r} |p^{+}(x)|^{-n} |\sin n\theta(x)|dx\]
for $r=1,\dots,p$.  Put
\[I'=\frac{2}{\pi n}\int_{\theta(x)=\frac{\pi}{n}p}^{\alpha} |p^{+}(x)|^{-n} |\sin n\theta(x)|dx\]
Then:
\begin{enumerate}
\item \[\frac{2}{\pi n}\int_1^{\alpha}\Im (p^{+}(x)^{-n})dx=-I_1+I_2-\dots+(-1)^pI_p+(-1)^{p+1}I'.\]
\item For $n\ge 9$ one has $p\ge 2$ and $I_1>I_2>\dots>I_p>I'$.
\end{enumerate}
\begin{proof}(\emph{1}) Observe that $\Im(p^{+}(x)^{-n})=|p^{+}(x)|^{-n}\sin (-n\theta(x))=-|p^{+}(x)|^{-n}\sin (n\theta(x))$.  Hence
\[\int_1^{\alpha}\Im(p^{+}(x)^{-n})dx=-\int_1^{\alpha}|p^{+}(x)|^{-n}\sin (n\theta(x))dx=\frac{\pi n}{2}(-I_1+I_2-\dots+(-1)^pI_p+(-1)^{p+1}I').\]
(\emph{2})  Let $x=\chi(y)$ for $y\in[0,\theta_0]$.  Use Lemma \ref{funcchix} and the definition of $\omega(x)$ in Proposition \eqref{decxpn} to deduce
\begin{eqnarray}\label{formIr}
&&I_r=\frac{2}{\pi n}\int_{\frac{\pi}{n}(r-1)}^{\frac{\pi}{n}r} \big(\omega(\chi(y))|p^{+}(\chi(y))|^{n-2})^{-1}|\sin ny|dy,\\
&& I'=\frac{2}{\pi n}\int_{\frac{\pi}{n}p}^{\theta_0} \big(\omega(\chi(y))|p^{+}(\chi(y))|^{n-2}\big)^{-1}|\sin ny|dy.\notag
\end{eqnarray}
Recall that the function $\omega(x)|p^{+}(x)|^7$ strictly increases on $[1,5]$. Use Proposition \eqref{decxpn} to deduce that $\omega(x)|p^{+}(x)|^{n-2}$ is strictly increasing on $[1,\alpha]$ for $n\geq 9$.

Since $|\sin(ny)|$ is periodic with period $\pi/n$, it follows that 
\[
I_1>I_2>\dots>I_p.
\] 
Additionally, 
\begin{align*}
I^\prime &=\frac{2}{\pi n}\int_{\frac{\pi}{n}p}^{\theta_0} \big(\omega(\chi(y))|p^{+}(\chi(y))|^{n-2}\big)^{-1}|\sin ny|\, dy \\
&\leq \frac{2}{\pi n}\int_{\frac{\pi}{n}(p-1)}^{\theta_0-\pi/n} \big(\omega(\chi(y))|p^{+}(\chi(y))|^{n-2}\big)^{-1}|\sin ny|\, dy\\
&< I_p.
\end{align*}

\end{proof}
\end{lemma}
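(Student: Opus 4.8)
The plan is to reduce the whole statement to the polar form of $p^{+}$ on $[1,\alpha]$ together with a single monotonicity fact. For part (1) I would start from Corollary~\ref{thetaincres}, which gives $p^{+}(x)=|p^{+}(x)|\,e^{\mathrm{i}\theta(x)}$ with $\theta$ a strictly increasing continuous bijection of $[1,\alpha]$ onto $[0,\theta_0]$, so that $\Im(p^{+}(x)^{-n})=-|p^{+}(x)|^{-n}\sin(n\theta(x))$. Partition $[1,\alpha]$ into the $p$ arcs $\{x:\theta(x)\in[(r-1)\pi/n,r\pi/n]\}$, $r=1,\dots,p$, plus the terminal arc $\{x:\theta(x)\in[p\pi/n,\theta_0]\}$; on the $r$-th arc $\sin(n\theta(x))$ has the constant sign $(-1)^{r-1}$, and on the terminal arc it has sign $(-1)^{p}$, because $p\pi\le n\theta(x)\le n\theta_0<(p+1)\pi$ by the definition of $p$. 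Hence the integral of $|p^{+}|^{-n}\sin(n\theta)$ over the $r$-th arc equals $(-1)^{r-1}\tfrac{\pi n}{2}I_r$ and over the terminal arc equals $(-1)^{p}\tfrac{\pi n}{2}I'$; summing these $p+1$ pieces and multiplying by $-\tfrac{2}{\pi n}$ produces the claimed alternating sum.

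For part (2) I would first note $p\ge2$: by Corollary~\ref{thetaincres} and $\theta_0=\theta(5)\approx0.8097$, the quantity $n\theta_0/\pi$ is nondecreasing in $n$ and already exceeds $2$ at $n=9$ (since $\theta_0>2\pi/9$), so $p=\lfloor n\theta_0/\pi\rfloor\ge2$ for all $n\ge9$. Next I would apply the substitution $x=\chi(y)$, $\chi=\theta^{-1}$, of Lemma~\ref{funcchix}: it turns $|p^{+}(x)|^{-n}\,dx$ into $\bigl(\omega(\chi(y))\,|p^{+}(\chi(y))|^{n-2}\bigr)^{-1}\,dy$ and $\sin(n\theta(x))$ into $\sin(ny)$, where $\omega=\psi_2'\psi_1-\psi_1'\psi_2$; this gives $I_r=\tfrac{2}{\pi n}\int_{(r-1)\pi/n}^{r\pi/n}\bigl(\omega(\chi(y))|p^{+}(\chi(y))|^{n-2}\bigr)^{-1}|\sin ny|\,dy$, and the same for $I'$ with limits $p\pi/n$ and $\theta_0$.

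The decisive ingredient is that $\omega(x)|p^{+}(x)|^{n-2}$ is strictly increasing on $[1,5]$ for every $n\ge9$. I would obtain this from Proposition~\ref{decxpn}: the base case that $\omega(x)|p^{+}(x)|^{7}$ is strictly increasing on $[1,5]$ (Figure~\ref{graph_omega_p}), combined with the bootstrap there that multiplying by the positive factor $|p^{+}(x)|$ — itself increasing on $[1,\infty)$ since $\psi_1,\psi_2$ increase, by Lemma~\ref{expansp+} — preserves monotonicity on $[1,\alpha]$, applied $n-9$ times. Since $\chi$ is increasing, $y\mapsto\bigl(\omega(\chi(y))|p^{+}(\chi(y))|^{n-2}\bigr)^{-1}$ is then strictly decreasing on $[0,\theta_0]$, while $|\sin ny|$ is $\pi/n$-periodic; substituting $y\mapsto y+\pi/n$ in $I_{r+1}$ and using that the denominator at $y+\pi/n$ dominates the one at $y$ yields $I_{r}>I_{r+1}$, hence $I_1>\dots>I_p$. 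For $I'<I_p$, substitute $y\mapsto y+\pi/n$ to rewrite $I'$ as an integral over $[(p-1)\pi/n,\theta_0-\pi/n]$; the denominator only grows under this forward shift, and by $p=\lfloor n\theta_0/\pi\rfloor$ we have $\theta_0-\pi/n<p\pi/n$, so $[(p-1)\pi/n,\theta_0-\pi/n]$ is a proper subinterval of $[(p-1)\pi/n,p\pi/n]$ on whose complement the positive weight $|\sin ny|$ still integrates to something positive, forcing $I'<I_p$.

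The one step I expect to be genuinely non-elementary is the base-case monotonicity of $\omega(x)|p^{+}(x)|^{7}$ on $[1,5]$: it is established by numerical computation rather than by a transparent inequality, and the choices $\alpha=5$, exponent $7$, and threshold $n\ge9$ are calibrated precisely so that the bootstrap reaches the exponent $n-2$. Everything else is routine bookkeeping with the arcs of $\theta$, the only delicate point being to keep the endpoint comparison $I'<I_p$ strict, which is exactly what the floor in the definition $p=\lfloor n\theta_0/\pi\rfloor$ guarantees.
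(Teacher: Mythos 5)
Your proposal is correct and follows essentially the same route as the paper: the polar form of $p^{+}$ from Corollary~\ref{thetaincres} and sign-constancy of $\sin(n\theta)$ on the arcs for part (1), and for part (2) the substitution $x=\chi(y)$ of Lemma~\ref{funcchix}, the base-case monotonicity of $\omega|p^{+}|^{7}$ with the bootstrap of Proposition~\ref{decxpn} to reach exponent $n-2$ for $n\ge 9$, and the $\pi/n$-shift comparison for both $I_r>I_{r+1}$ and $I'<I_p$. Your handling of the strictness of $I'<I_p$ via $\theta_0-\pi/n<p\pi/n$ is exactly the endpoint argument implicit in the paper's displayed chain of inequalities.
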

We now give the analog of $q$ in \cite{Haagerup}:
\begin{lemma}\label{defq}  Let 
\[
\mu(x)=\frac{\psi_2'(x)\psi_2(x)+\psi_1'(x)\psi_1(x)}{\psi_2'(x)\psi_1(x) -\psi_1'(x)\psi_2(x)}, \qquad x\in(1,2].
\]
 Then $\mu(x)$ strictly decreases on the interval $(1,1.732]$.  Furthermore
\[
(\log |p^{+}(\chi(y))|)'=\mu(\chi(y)),\qquad y\in(0,\frac{\pi}{2}).
\]
\end{lemma}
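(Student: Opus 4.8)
I would prove the derivative identity first (it is short and motivates the definition of $\mu$) and then the monotonicity, which is the substantive part.

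\textbf{The identity.} By Corollary~\ref{thetaincres} we have $|p^{+}(x)|=\sqrt{\psi_1^2(x)+\psi_2^2(x)}$ for $x\ge 1$, so
\[
\bigl(\log|p^{+}(x)|\bigr)'=\tfrac12\bigl(\log(\psi_1^2+\psi_2^2)\bigr)'=\frac{\psi_1\psi_1'+\psi_2\psi_2'}{\psi_1^2+\psi_2^2}.
\]
I would then differentiate $y\mapsto\log|p^{+}(\chi(y))|$ by the chain rule and insert the formula for $d\chi/dy$ from Lemma~\ref{funcchix}: the factor $\psi_1^2+\psi_2^2$ cancels and what remains is precisely $\mu(\chi(y))$. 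This is legitimate for $y\in(0,\pi/2)$, where $\theta$ is strictly increasing with $\theta'=\omega/|p^{+}|^2>0$ (Corollary~\ref{thetaincres} and Lemma~\ref{lemma2.3H}, which gives $p^{+}\neq0$ on $(1,\infty)$), so that $\chi=\theta^{-1}$ is differentiable.

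\textbf{Monotonicity: reduction.} Write $\mu=N/\omega$ with $N:=\psi_1\psi_1'+\psi_2\psi_2'=\tfrac12(|p^{+}|^2)'$ and $\omega:=\psi_2'\psi_1-\psi_1'\psi_2$ (the function of Proposition~\ref{decxpn}). By Lemma~\ref{expansp+} both $\psi_1>0$ and $\psi_2\ge0$ strictly increase on $[1,\infty)$, so $|p^{+}|^2$ strictly increases, $N>0$ on $[1,\infty)$, and in particular $N(1)=\psi_1'(1)>0$; moreover $\omega>0$ on $(1,\infty)$ while $\omega(1)=0$, so $\mu(x)\to+\infty$ as $x\to1^{+}$ — this is why the interval must be half-open at $1$. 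Since $N,\omega>0$ on the interior, $\mu'<0$ is equivalent to $N'\omega<N\omega'$, and I would prove this on $(1,1.732]$ by splitting into a neighbourhood of $1$ and a complementary compact interval.

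\textbf{Monotonicity: the two regimes, and the main obstacle.} Near $x=1$ the inequality is delicate: from \eqref{expforReIMp} (or the expansions in Lemma~\ref{expansp+}) one sees that $\psi_2$ vanishes to second order and $\psi_2'$ to first order at $x=1$, while $\psi_1''$ — and hence $N'$ — has only a logarithmic singularity as $x\to1^{+}$, stemming from the endpoint behaviour of $\int_0^{\pi/2}\sin^4u\,(1-x^{-2}\sin^2u)^{-1/2}\,du$. Therefore $\omega$ vanishes linearly while $N'$ blows up only like $\log\tfrac{1}{x-1}$, so $N'\omega\to0$ whereas $N\omega'$ tends to the positive limit $N(1)\,\omega'(1^{+})$; hence $N'\omega<N\omega'$ on some $(1,1+\delta]$. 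On the remaining compact interval $[1+\delta,1.732]$ the quantities $N,N',\omega,\omega'$ are continuous, bounded and given explicitly by \eqref{expforReIMp}, and $\omega'>0$ there by Proposition~\ref{decxpn} (since $1.732<\tau$); one must still verify $N'\omega<N\omega'$ on this interval, and \emph{this is the main obstacle}: $N'$ involves the second derivatives of $\psi_1,\psi_2$ (equivalently third moments of $\sin^2u$), so its sign is not manifest. I would establish it either by a term-by-term comparison of the explicitly signed coefficients of Lemma~\ref{expansp+} after the substitution $t=1-x^{-2}$ (which turns $\mu$ into a ratio of the same algebraic shape in the new variable, the Jacobian cancelling in numerator and denominator), or — as was done for the monotonicity of $\omega$ itself (cf. Figure~\ref{graph_omega}) — by a rigorous numerical estimate on $[1,1.732]$.
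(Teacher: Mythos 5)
Your proof of the identity is exactly the paper's: write $\log|p^{+}(x)|=\tfrac12\log(\psi_1^2(x)+\psi_2^2(x))$, apply the chain rule, and cancel $\psi_1^2+\psi_2^2$ against the numerator of $\chi'(y)$ from Lemma~\ref{funcchix}. Your reduction of the monotonicity is also the paper's in disguise: Proposition~\ref{mudecreasing} proves that $1/(4\mu)$ increases by showing that the numerator $\nu$ of its derivative is positive, and $\nu>0$ is precisely your inequality $N'\omega<N\omega'$ with $N=\psi_1\psi_1'+\psi_2\psi_2'$. So the strategy coincides; the differences lie in how the two regimes are handled and in the fact that you stop short of the decisive verification.

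Concretely: the paper does not use your soft asymptotic argument near $x=1$. It runs the same machinery on $[1,1.01]$ as on $[1.01,1.732]$ — replacing each of $\psi_i,\psi_i',\psi_i''$ by rational truncations of the series in Lemma~\ref{expansp+} with closed-form tail bounds (Proposition~\ref{basinvarphi}) and certifying positivity of the resulting rational minorant of $\nu$ by exact root counting — with one extra ingredient on $[1,1.01]$, the elementary bound $\phi_1'(x)\ge\tfrac{7}{11x^3}$, needed exactly because the truncated lower bound $\bar\phi_{1,m}$ for $\phi_1'$ diverges to $-\infty$ at $x=1$. Your asymptotics near $1$ (quadratic vanishing of $\psi_2$, linear vanishing of $\omega$, logarithmic blow-up of $\psi_1''$, $\omega'(1^{+})=\psi_2''(1^{+})\psi_1(1)>0$) are correct and give a legitimate alternative treatment of that endpoint, but they produce only an unquantified $\delta$, so they cannot be stitched to a finite verification on $[1+\delta,1.732]$ without being made explicit. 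The genuine gap is the compact piece itself, which you rightly call the main obstacle but for which you only name two candidate methods: the ``term-by-term comparison of signed coefficients'' is not what the paper does and is unpromising here, since the signs of $\psi_1''$ and of the individual terms of $\nu$ are mixed on $[1,1.732]$; the ``rigorous numerical estimate'' is the paper's route, and carrying it out (truncation, tail control via Lemma~\ref{triviallemma}, and exact root counting, as in Propositions~\ref{basinvarphi} and~\ref{mudecreasing}) is the entire substance of the lemma. In short: identity proved, framework correct and matching the paper, but the quantitative core of the monotonicity claim is deferred rather than established.
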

\begin{figure}
\includegraphics[width=10cm]{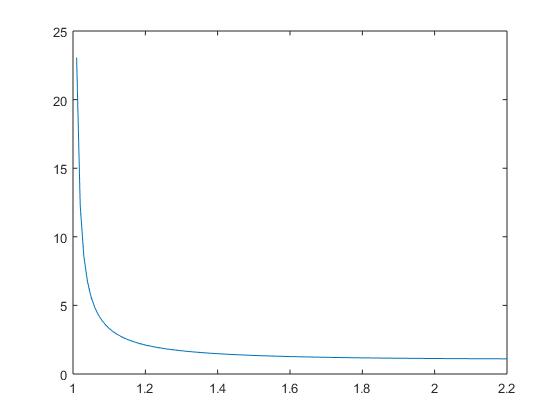}
\caption{Graph of function $\mu(x)$.}
\label{graph_mu}
\centering
\end{figure} 
\begin{proof} The claim that $\mu(x)$ strictly decreases on $(1,1.732]$ follows from the Proposition~\ref{mudecreasing}.  
Clearly
\[\log |p^{+}(x)|=\frac{1}{2} \log |p^{+}(x)|^2=\frac{1}{2}\log (\psi_2^2(x)+\psi_1^2(x)).\]
Hence
\[
(\log |p^{+}(\chi(y))|)'=\frac{\psi_2'(\chi(y))\psi_2(\chi(y))+\psi_1'(\chi(y))\psi_1(\chi(y))}{\psi_2^2(\chi(y))+\psi_1^2(\chi(y))}\chi'(y)=\mu(\chi(y)).
\]

\end{proof}

We now give the analog of \cite{Haagerup}[Lemma 2.9]:
\begin{lemma}\label{HagL2.9}   Let $n\ge 21$. $I_1,I_2,\dots$ are defined as in Lemma \ref{HagL2.8} and  $q=\mu(\tau)\approx 1.2020$.  Put 
\[c=|p^{+}(\tau)|e^{-q\theta(\tau)}\approx 1.2923.\]
Then
\begin{enumerate}
\item $I_1>\frac{0.326}{ n^2}c^{-n}$,
\item $I_2< 0.033 I_1$.
\end{enumerate}
\end{lemma}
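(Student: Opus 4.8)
The plan is to follow Haagerup's argument for his Lemma 2.9, substituting the quaternionic functions $\psi_1,\psi_2,\omega,\mu,\theta$ for their hypergeometric counterparts. Recall from Lemma \ref{HagL2.8} and \eqref{formIr} that
\[
I_r=\frac{2}{\pi n}\int_{\frac{\pi}{n}(r-1)}^{\frac{\pi}{n}r} \bigl(\omega(\chi(y))\,|p^{+}(\chi(y))|^{n-2}\bigr)^{-1}|\sin ny|\,dy,
\]
so everything reduces to controlling the integrand $\bigl(\omega(\chi(y))\,|p^{+}(\chi(y))|^{n-2}\bigr)^{-1}$ on the first two intervals $[0,\pi/n]$ and $[\pi/n,2\pi/n]$, which (since $n\ge 21$) sit inside $[0,\theta_0]$ with $\chi(y)$ ranging over a subinterval of $[1,\tau]$.

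\emph{Step 1: a lower bound for $I_1$.} First I would use Lemma \ref{defq}: since $(\log|p^{+}(\chi(y))|)'=\mu(\chi(y))$ and $\mu$ is strictly decreasing on $(1,\tau]$ by Proposition~\ref{mudecreasing}, we have $\mu(\chi(y))\ge \mu(\tau)=q$ for $\chi(y)\in[1,\tau]$; integrating and using $\theta(1)=0$ gives $|p^{+}(\chi(y))|\ge |p^{+}(\tau)|e^{-q(\theta(\tau)-y)}=c\,e^{qy}$, i.e. $|p^{+}(\chi(y))|^{n-2}\le$ respectively $\ge$ the appropriate exponential — I want an upper bound on $|p^{+}|^{-(n-2)}$ translated into a lower bound, so I use instead the monotonicity $|p^{+}(\chi(y))|\le |p^{+}(\tau)|$ together with the more careful exponential comparison to extract the factor $c^{-n}$. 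Concretely, on $[0,\pi/n]$ bound $|p^{+}(\chi(y))|^{-(n-2)}\ge c^{-(n-2)}e^{-q(n-2)y}\ge c^{-(n-2)}e^{-q\pi(n-2)/n}$, bound $\omega(\chi(y))^{-1}$ below by $\omega(\tau)^{-1}$ (using that $\omega$ increases on $[1,\tau]$, Proposition~\ref{decxpn}), and use $\int_0^{\pi/n}|\sin ny|\,dy=2/n$. This yields $I_1\ge \frac{2}{\pi n}\cdot\frac{2}{n}\cdot\omega(\tau)^{-1}c^{-(n-2)}e^{-q\pi(n-2)/n}$; absorbing $c^2 e^{2q\pi/n}\omega(\tau)^{-1}$ and the constant $\tfrac{4}{\pi}$ into a single numerical constant and using $n\ge 21$ to make $e^{-q\pi(n-2)/n}\ge e^{-q\pi}$ effective, one checks numerically that the constant exceeds $0.326$, giving $I_1>\tfrac{0.326}{n^2}c^{-n}$.

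\emph{Step 2: the ratio bound $I_2<0.033\,I_1$.} Here I would compare the integrands on $[\pi/n,2\pi/n]$ versus $[0,\pi/n]$. Since $|\sin ny|$ has period $\pi/n$, the two integrals differ only through the weight $w(y):=\bigl(\omega(\chi(y))|p^{+}(\chi(y))|^{n-2}\bigr)^{-1}$, and $w$ is strictly decreasing in $y$ because $\omega(x)|p^{+}(x)|^{n-2}$ is strictly increasing on $[1,\alpha]$ for $n\ge 9$ (Proposition~\ref{decxpn}, bootstrapped from $\omega(x)|p^{+}(x)|^7$ increasing on $[1,5]$). So $I_2/I_1\le \sup_{y\in[\pi/n,2\pi/n]} w(y)\big/\inf_{y\in[0,\pi/n]}w(y)=w(\pi/n)/w(0^+)$ — but $w(0^+)$ is $+\infty$ since $|p^{+}(1)|=\psi_1(1)=1$ is not the issue; rather $\omega(1)=0$, so I must instead compare shifted intervals as in the proof of Lemma~\ref{HagL2.8}, i.e. bound $I_2$ against the integral of $w$ over $[0,\pi/n]$ restricted away from the singularity, and bound that ratio by an exponential decay factor coming from $|p^{+}(\chi(y+\pi/n))|/|p^{+}(\chi(y))|\le e^{-q\pi/n}\cdot(\text{correction})$, raised to the power $n-2$, giving roughly $e^{-q\pi}\approx 0.023$ times bounded corrections from $\omega$; then verify numerically the product is below $0.033$.

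\emph{Main obstacle.} The delicate point is Step 2: getting the ratio $I_2/I_1$ below the stated $0.033$ requires the exponential decay rate $q=\mu(\tau)$ to be used \emph{sharply}, which in turn forces the estimate to exploit monotonicity of $\mu$ (hence $\mu\ge q$ throughout $[1,\tau]$, not just at $\tau$) together with the monotonicity of $\omega|p^{+}|^{n-2}$, and then to confirm that the accumulated numerical constants — which depend on $\omega(\tau)$, $|p^{+}(\tau)|$, $\theta(\tau)$, and the lower-order terms in the $n\ge 21$ truncation — actually land below the claimed thresholds. All the qualitative monotonicity inputs are already available (Propositions~\ref{decxpn}, \ref{mudecreasing}, \ref{condompmincreas} and Corollary~\ref{thetaincres}); the work is to chain them into explicit inequalities and then discharge the final numerical verification, which is the quaternionic analogue of the interval-arithmetic estimates Haagerup carried out for $h$.
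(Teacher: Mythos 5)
Your overall skeleton is the paper's: use $(\log|p^{+}(\chi(y))|)'=\mu(\chi(y))\ge q$ on $(0,\theta(\tau)]$ to get the exponential comparison with $c$, use monotonicity of $\omega$ on $[1,\tau]$ to pull out $\omega(\tau)^{-1}$, and handle $I_2$ by shifting the interval by $\pi/n$. (Minor but real slips: integrating $\mu\ge q$ from $y$ to $\theta(\tau)$ gives the \emph{upper} bound $|p^{+}(\chi(y))|\le c e^{qy}$, not $\ge$; and the shifted comparison is $|p^{+}(\chi(y+\pi/n))|\ge e^{q\pi/n}|p^{+}(\chi(y))|$, not $\le e^{-q\pi/n}$. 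You silently use the correct directions later, so these are fixable.)

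The genuine gap is in Step 1: your estimate cannot produce the constant $0.326$. You bound $e^{-q(n-2)y}$ by its minimum $e^{-q\pi(n-2)/n}$ over $[0,\pi/n]$ and then integrate $|\sin ny|$ separately, getting
\[
I_1\ \ge\ \frac{4\,c^{2}}{\pi\,\omega(\tau)}\,e^{-q\pi(n-2)/n}\,\frac{c^{-n}}{n^{2}}.
\]
Numerically $\frac{4c^{2}}{\pi\omega(\tau)}e^{-q\pi}\approx \frac{4(1.670)(0.0229)}{(3.1416)(1.360)}\approx 0.036$, and even keeping the factor $e^{2q\pi/n}$ at $n=21$ only raises this to about $0.05$ — an order of magnitude short of $0.326$. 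The loss comes from decoupling $\sin ny$ from the exponential: $\sin ny$ is concentrated where the exponential has not yet decayed to its endpoint value. The paper keeps the two factors together and evaluates
\[
\int_0^{\pi/n}(ce^{qy})^{2-n}\sin ny\,dy=\frac{c^{2-n}}{n}\int_0^{\pi}e^{-\frac{(n-2)qy}{n}}\sin y\,dy\ \ge\ \frac{c^{2-n}}{n}\int_0^{\pi}e^{-qy}\sin y\,dy=\frac{c^{2-n}}{n}\cdot\frac{1+e^{-q\pi}}{1+q^2}\approx \frac{c^{2-n}}{n}(0.418),
\]
which yields the constant $\frac{2c^{2}}{\omega(\tau)\pi}\cdot\frac{1+e^{-q\pi}}{1+q^2}\approx 0.327$. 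Without this exact evaluation of the damped sine integral, part (1) as stated is out of reach. Your Step 2, once the inequality directions are corrected, does reduce to the paper's bound $I_2\le e^{-(n-2)q\pi/n}I_1\le e^{-19q\pi/21}I_1\approx 0.0328\,I_1<0.033\,I_1$, but you should state that the relevant exponent is $(n-2)q\pi/n$ evaluated at $n=21$, not $e^{-q\pi}\approx 0.023$ with unquantified ``corrections.''
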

\begin{proof}  Recall that $(\log |p^{+}(\chi(y))|)'=\mu(\chi(y))$ and $\mu(x)$ is strictly decreasing on $(1,1.732]$.
Hence $\mu(\chi(y))$ strictly decreasing on $(0,\theta(1.732)]$. In particular, for $y\in (0,\theta(\tau)]$
\[\mu(\chi(y))\ge \mu(\chi(\theta(\tau)))=\mu(\tau)=q\approx 1.2020.\]
Therefore
\begin{equation}\label{p+ineqint}
\log|p^{+}(\chi(u))|-\log|p^{+}(\chi(y))|=\int_{y}^{u}\mu(\chi(t))dt\ge q(u-y), \textrm{ for } 0\le y\le u\le\theta(\tau).
\end{equation}
Choose $u=\theta(\tau)$ to obtain
\begin{equation}\label{p+ineq}
|p^{+}(\chi(y))|\le ce^{qy} \textrm{ for } y\in [0,\theta(\tau)].
\end{equation}

(\emph{1})  
We first use \eqref{formIr} for $r=1$:
\[I_1=\frac{2}{\pi n}\int_{0}^{\frac{\pi}{n}} \big(\omega(\chi(y))|p^{+}(\chi(y))|^{n-2})^{-1}|\sin ny|dy  .\]
Since $n\ge 21$ it follows that 
\[\frac{\pi}{n}\le \frac{\pi}{21}\approx 0.1496<\theta(\tau)\approx 0.3224.\]
 As $\omega(x)$ is increasing on the interval $[1,\tau]$ it follows that $(\omega(\chi(y)))^{-1}> \omega^{-1}(\tau)$ on $(0, \frac{\pi}{n}]$.  Hence
 \[I_1>\frac{2}{\omega(\tau)\pi n}\int_{0}^{\frac{\pi}{n}} |p^{+}(\chi(y))|^{2-n} \sin ny \, dy  .\]
 Apply inequality \eqref{p+ineq} to deduce that
\begin{align*}
I_1 &> \frac{2}{\omega(\tau)\pi n} \int_0^{\frac{\pi}{n}}  (ce^{qy})^{2-n}\sin ny \,dy \\
&=\frac{2c^{2-n}}{\omega(\tau)\pi n^2} \int_0^{\pi} e^{-\frac{(n-2)qy}{n}}\sin y \,dy  \\ 
&\geq \frac{2c^{2-n}}{\omega(\tau)\pi n^2} \int_0^{\pi} e^{-qy}\sin y \,dy \\
&=\frac{2c^2}{\omega(\tau)\pi n^2}\frac{1+e^{-q\pi}}{1+q^2} c^{-n}.
\end{align*}
Since 
\[
\frac{2c^2}{\omega(\tau)\pi}\frac{1+e^{-q\pi}}{1+q^2}\approx 0.32697,
\]
this completes the proof of (1).

 (\emph{2}) We now use \eqref{formIr} for $r=2$:
\[I_2=\frac{2}{\pi n}\int_{\frac{\pi}{n}}^{\frac{2\pi}{n}} \big(\omega(\chi(y))|p^{+}(\chi(y))|^{n-2})^{-1}|\sin ny|dy  .\]
We now make a subsittution $y=t+\frac{\pi}{n}$ in the integral formula for $I_2$:
\[I_2=\frac{2}{\pi n}\int^{\frac{\pi}{n}}_0 \big(\omega(\chi(t+\frac{\pi}{n}))|p^{+}(\chi(t+\frac{\pi}{n}))|^{n-2})^{-1}\sin nt\,dt  .\]
Let
\[\hat I_2=\frac{2}{\pi n}\int^{\frac{\pi}{n}}_0 \big(\omega(\chi(t))|p^{+}(\chi(t+\frac{\pi}{n}))|^{n-2})^{-1}\sin nt\,dt.\]
Since $n\ge 21$ it follows that 
\[\frac{2\pi}{n}\le \frac{2\pi}{21}\approx 0.2992<\theta(\tau)\approx 0.3224.\]
As $\omega(x)$ is strictly increasing on the interval $[1,\tau]$, we know that
$(\omega(\chi(y)))^{-1}$ strictly decreases on $(0, \frac{2\pi}{n}]$. Then one gets $I_2<\hat I_2$.  Next use the inequality \eqref{p+ineqint} for $y=t$ and $u=t+\frac{\pi}{n}$ which yields
\[
e^{-\frac{(n-2)q\pi}{n}}|p^{+}(\chi(t))|^{-(n-2)}\ge|p^{+}(\chi(t+\frac{\pi}{n}))|^{-(n-2)}, \qquad t\in [0,\frac \pi n].
\]
Hence for $n\ge 21$
\[\hat I_2\le e^{-\frac{(n-2)q\pi}{n}} I_1\le e^{-\frac{19q\pi}{21}}I_1<0.033I_1.\]
\end{proof}

\begin{theorem}\label{nonpositpinvcorf}
$c_1=\frac {32}{9\pi}$ and $c_{2k+1}< 0$ for $k\geq1$. 
\end{theorem}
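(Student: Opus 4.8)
The plan is to carry out Haagerup's program, for which Lemmas~\ref{lemma2.4H}, \ref{HagL2.8}, and \ref{HagL2.9} have prepared all the ingredients. I would split the argument into the value of $c_1$, the range of large odd $n$, and a finite numerical check for small odd $n$; note that since $p$ is odd, $p^{-1}$ is odd, so only the coefficients $c_{2k+1}$ can be nonzero and it suffices to treat odd indices.

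For $c_1$, I would use the Lagrange inversion formula \eqref{inversion_formula}, which gives $c_1=\lim_{t\to 0}t/p(t)=1/p'(0)$; the Taylor expansion \eqref{taylorforp} then yields $p'(0)=\tfrac{9\pi}{16\cdot 1\cdot 2}=\tfrac{9\pi}{32}$ (using $(-1)!!=0!!=1$), so $c_1=\tfrac{32}{9\pi}>0$. For odd $n\ge 21$, I would fix $\alpha=5$ as in Lemma~\ref{HagL2.8}. By Lemma~\ref{lemma2.4H}, $c_n=\frac{2}{\pi n}\int_1^5\Im\bigl(p^+(x)^{-n}\bigr)\,dx+r_n(5)$ with $|r_n(5)|\le\frac{5}{n}\psi_2(5)^{-n}$. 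Lemma~\ref{HagL2.8} rewrites the integral as $S=-I_1+I_2-\cdots+(-1)^{p+1}I'$ where $I_1>I_2>\cdots>I_p>I'>0$ and $p\ge 2$; grouping this strictly decreasing alternating sum into consecutive pairs (with a leftover final term when the number of terms is odd) gives $-S=I_1-I_2+I_3-\cdots\ge I_1-I_2>0$, so $S<0$ and $|S|\ge I_1-I_2$. Lemma~\ref{HagL2.9} supplies $I_2<0.033\,I_1$ and $I_1>\frac{0.326}{n^2}c^{-n}$ with $c\approx 1.2923$, hence $|S|\ge 0.967\,I_1>\frac{0.315}{n^2}c^{-n}$, so $c_n\le-\frac{0.315}{n^2}c^{-n}+\frac{5}{n}\psi_2(5)^{-n}$. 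This is negative precisely when $(\psi_2(5)/c)^n>\frac{5n}{0.315}$. A numerical evaluation of $\psi_2$ via \eqref{expforReIMp} shows $\psi_2(5)>c$ (indeed $\psi_2(5)\approx 2.5$), so the left side grows exponentially while the right side is linear, and the inequality holds for every $n\ge 21$; thus $c_n<0$ for all odd $n\ge 21$.

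For the remaining odd values $n\in\{3,5,7,9,11,13,15,17,19\}$, I would compute $c_n$ directly from \eqref{inversion_formula}: writing $p(t)=\tfrac{9\pi}{32}\,t\,\rho(t^2)$ with $\rho$ a power series with rational coefficients and $\rho(0)=1$, one has $(t/p(t))^n=(\tfrac{32}{9\pi})^n\rho(t^2)^{-n}$, so $c_n=(\tfrac{32}{9\pi})^n$ times an explicit rational number, and it remains to check that this rational is negative for each of these nine values of $n$ (equivalently, one may evaluate $c_n$ numerically with rigorous error control, e.g.\ by interval arithmetic, or via Lemma~\ref{lemma2.4H} with a sufficiently large $\alpha$). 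Combining the three cases establishes the theorem.

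The main obstacle will be the quantitative comparison in the large-$n$ step: one must verify rigorously both the inequality $\psi_2(5)>c$ — which is somewhat tight since $c$ is only about $1.29$ — and the claim that the lower bound on $|S|$ (which ultimately rests on the monotonicity statements behind Lemmas~\ref{HagL2.8}, \ref{HagL2.9}, \ref{defq} and Proposition~\ref{decxpn}, themselves established by numerical calculation) genuinely dominates the tail bound $r_n(5)$ for \emph{every} $n\ge 21$. The finitely many small cases are routine but still require a genuine, not merely floating-point, determination of sign.
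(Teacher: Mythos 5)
Your proposal follows the paper's proof essentially verbatim: the same threshold $n\ge 21$, the same combination of Lemmas~\ref{lemma2.4H}, \ref{HagL2.8} and \ref{HagL2.9} with $\alpha=5$ yielding $-c_n > I_1 - I_2 - r_n(5) > \frac{0.315}{n^2}\,1.293^{-n} - \frac{5}{n}\,2.4^{-n} > 0$, and the same direct Lagrange-inversion evaluation for the remaining odd $n\le 19$. Your additional remarks (the explicit pairing of the strictly decreasing alternating sum, and the observation that the small coefficients are $(32/9\pi)^n$ times explicit rationals, so their signs can be certified exactly) are correct refinements of details the paper leaves implicit.
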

\begin{proof}
Let $k\geq 10$. Applying Lemma \ref{lemma2.4H} with $\alpha=5$ and Lemma \ref{HagL2.8}, we have 
\begin{align*}
-c_{2k+1}  &=I_1-I_2+\dots+(-1)^{p-1}I_p+(-1)^p I^\prime-r_{2k+1}(5)  \\
&> I_1-I_2-r_{2k+1}(5).
\end{align*}
Using Lemma \ref{HagL2.9}, we get 
\[
(I_1-I_2)>\frac{0.315}{(2k+1)^2}1.293^{-(2k+1)} \quad \text \quad |r_{2k+1}(5)|\leq \frac{5}{2k+1}2.4^{-(2k+1)}.
\]
Plugging above formulas into $-c_{2k+1}> I_1-I_2-r_{2k+1}(5)$, it follows that for $k\geq 10$
\begin{align*}
-c_{2k+1}&>(I_1-I_2)-r_{2k+1}(5) \\
&> \frac{0.315}{(2k+1)^2}1.293^{-(2k+1)}- \frac{5}{2k+1}2.4^{-(2k+1)} \\
&=\frac{0.315}{(2k+1)^2}1.293^{-(2k+1)}\left[1-\frac{5(2k+1)}{0.315}\left(\frac{1.293}{2.4}\right)^{2k+1} \right] \\
&>0.
\end{align*}

By directly using \eqref{inversion_formula}, we can obtain following approximations (rounded to two decimal places).

\begin{center}
\begin{tabular}{ c l c  l c l c l c }
 $n$     &1 & 3 & 5 & 7&9   \\     \hline 
\\ [-1em]
 $-c_n$  & $-32/9\pi$ &  0.12 &$ 4.84\cdot 10^{-3}$ & $2.58\cdot 10^{-3}$ & $1.22\cdot 10^{-3}$   \\   \hline
\\ [-1em]
 $n$     & 11&13&15&17 &19  \\     \hline
\\ [-1em]
 $-c_n$ &  $6.76\cdot 10^{-4}$ & $4.15\cdot 10^{-4}$ & $2.74\cdot 10^{-4}$ &  $1.91\cdot 10^{-4}$ & $1.39\cdot 10^{-4}$
\end{tabular}  
\end{center}
\end{proof}

Since $c_{2k+1}\leq0$ for $k\geq1$, $h_1(z)=c_1 z -h^{-1}(z)$ has nonnnegative Taylor coefficients. Vivanti-Pringsheim theorem yields that if the radius of convergence of Taylor series of $h_1(z)$ is $r$ then $h_1(z)$, and hence $h^{-1}(z)$, has a singular point at $r$.  As $h'(t)>0$ on $(0,1)$, and $h(1)=1$, it follows that $r\ge 1$ .  Clearly $h^{-1}(t)\le \sum_{k=0}^N c_{2k+1}t^{2k+1}$ for $t\in (0,1)$.  That is $\sum_{k=1}^N|c_{2k+1}|t^{2k+1}\le c_1t -h^{-1}(t)$ for $t\in (0,1)$. In particular $\sum_{k=1}^N|c_{2k+1}|\le c_1 -1$.  Thus $\sum_{k=0}^{\infty}|c_{2k+1}|\le 2c_1 -1$. 
 
As $c_1 \ne 0$, clearly, the function 
\begin{equation}\label{defpsix}
\psi(x) \coloneqq \sum_{k=0}^{\infty}|c_{2k+1}|x^{2k+1} 
\end{equation}
is a strictly increasing and continuous on $[0,r)$.  Recall that if the series 
\eqref{invexpansionp} converge for $x_0\in \mathbb{R}$ (pointwise) then $r\ge |x_0|$.
Hence $\psi(x)=+\infty$ for $x>r$.  $\psi(1)=\sum_{k=0}^{\infty}|c_{2k+1}|\geq c_1= 32/ (9\pi)>1$. Thus there exists a unique $c_0\in(0,1)$ such that $\psi(c_0)=1$.

 \begin{proposition}\label{existc0}  The following equality holds
 \[\sum_{k=1}^{\infty}|c_{2k+1}|=-\sum_{k=1}^{\infty}c_{2k+1}=c_1-p^{-1}(1)=\frac{32}{ 9\pi}-1.\]
 Hence the equation $\psi(x)=1$ has a unique solution $c_0\in (0, \frac{9\pi}{32})$ given by the equation $c_0=p(2c_1c_0-1)$.  Equivalently, let $x_0$ be the unique solution of 
 \begin{equation}\label{defx0}
 p(x_0)=\frac{9\pi(1+x_0)}{64}.
 \end{equation}
 Then
 \begin{equation}\label{alterdefc0}
 c_0=\frac{9\pi(1+x_0)}{64}.
 \end{equation}
 \end{proposition}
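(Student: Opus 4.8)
The plan is to upgrade the inequality $\sum_{k\ge1}|c_{2k+1}|\le c_1-1$ proved just above to an equality by Abel summation at the boundary point $z=1$, and then to read off the fixed-point characterizations of $c_0$ from elementary manipulations of the power series $\psi$. For the first part: since $c_{2k+1}<0$ for $k\ge1$ by Theorem~\ref{nonpositpinvcorf}, we have $\sum_{k\ge1}|c_{2k+1}|=-\sum_{k\ge1}c_{2k+1}$, so it suffices to show $\sum_{k\ge0}c_{2k+1}=p^{-1}(1)=1$. We already know the series \eqref{invexpansionp} converges absolutely at $z=1$, since $\sum_{k\ge0}|c_{2k+1}|\le 2c_1-1<\infty$; in particular its radius of convergence is at least $1$, so the power series $\sum_k c_{2k+1}x^{2k+1}$ converges on $(-1,1)$ and, agreeing with the analytic function $p^{-1}$ near $0$ on the connected interval $(-1,1)$, represents $p^{-1}$ there. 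Abel's theorem then gives $\sum_{k\ge0}c_{2k+1}=\lim_{x\to1^-}p^{-1}(x)$, which equals $p^{-1}(1)=1$ because $p\colon[-1,1]\to[-1,1]$ is a homeomorphism with $p(1)=1$. Hence $\sum_{k\ge1}c_{2k+1}=1-c_1$ and
\[
\sum_{k\ge1}|c_{2k+1}|=-\sum_{k\ge1}c_{2k+1}=c_1-p^{-1}(1)=c_1-1=\frac{32}{9\pi}-1,
\]
using $c_1=\tfrac{32}{9\pi}$ from Theorem~\ref{nonpositpinvcorf}.

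Next I would rewrite $\psi$. Splitting off the $k=0$ term and using $|c_1|=c_1$ together with $|c_{2k+1}|=-c_{2k+1}$ for $k\ge1$, the absolutely convergent series \eqref{defpsix} rearranges, for $x\in[0,1)$, to
\[
\psi(x)=c_1x-\sum_{k\ge1}c_{2k+1}x^{2k+1}=2c_1x-p^{-1}(x).
\]
As recorded just before the proposition, $\psi$ is continuous and strictly increasing on $[0,1]$ with $\psi(0)=0$ and $\psi(1)=\sum_{k\ge0}|c_{2k+1}|=2c_1-1=\tfrac{64}{9\pi}-1>1$, so there is a unique $c_0\in(0,1)$ with $\psi(c_0)=1$. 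Moreover $\psi(x)-c_1x=\sum_{k\ge1}|c_{2k+1}|x^{2k+1}\ge|c_3|\,x^3>0$ for $x>0$ (as $c_3<0$), so $\psi(1/c_1)>c_1\cdot(1/c_1)=1=\psi(c_0)$; strict monotonicity then forces $c_0<1/c_1=\tfrac{9\pi}{32}$.

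Finally I would extract the two equations. Evaluating the identity above at $x=c_0$, the relation $\psi(c_0)=1$ reads $2c_1c_0-p^{-1}(c_0)=1$, i.e.\ $p^{-1}(c_0)=2c_1c_0-1$; since $c_0\in(0,1)$ and $2c_1c_0-1\in(-1,1)$ by the previous step, applying the bijection $p$ gives $c_0=p(2c_1c_0-1)$. Setting $x_0\coloneqq 2c_1c_0-1$, so that $c_0=\tfrac{1+x_0}{2c_1}=\tfrac{9\pi(1+x_0)}{64}$ (using $\tfrac{1}{2c_1}=\tfrac{9\pi}{64}$), this is precisely \eqref{alterdefc0}, and substituting back turns $c_0=p(x_0)$ into $\tfrac{9\pi(1+x_0)}{64}=p(x_0)$, which is \eqref{defx0}. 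Uniqueness of $x_0$ in $(-1,1)$ follows from uniqueness of $c_0$ in $(0,\tfrac{9\pi}{32})$ together with the fact that $c_0\mapsto 2c_1c_0-1$ is an affine bijection of $(0,\tfrac{9\pi}{32})$ onto $(-1,1)$.

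The only non-routine step is the passage from the bound $\sum_{k\ge1}|c_{2k+1}|\le c_1-1$ established above to the exact value; the crux there is identifying the boundary value at $z=1$ of the inverse power series with the value $1$ of the continuous inverse $p^{-1}$, i.e.\ Abel's theorem together with $p(1)=1$. Everything else amounts to tracking the signs of the coefficients $c_{2k+1}$.
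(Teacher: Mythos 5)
Your proposal is correct and follows essentially the same route as the paper's (very terse) proof: both rest on the identity $\psi(x)=2c_1x-p^{-1}(x)$ and the substitution $x_0=2c_1c_0-1$. You merely make explicit the boundary step at $x=1$ (Abel summation plus $p^{-1}(1)=1$) that upgrades the paper's earlier inequality $\sum_{k\ge1}|c_{2k+1}|\le c_1-1$ to the asserted equality, and the verification that $c_0<\tfrac{9\pi}{32}$, both of which the paper leaves implicit.
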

 \begin{proof}  Observe that $\psi(x)=2c_1x-p^{-1}(x)$.  Hence $\psi(c_0)=1$ is equivalent to $p^{-1}(c_0)=2c_1c_0-1$, which implies that $c_0=p(2c_1c_0-1)$.  Set
 $x_0=2c_1c_0-1$ and use $c_0=\frac{9\pi(1+x_0)}{64}$ to deduce the Proposition.
 \end{proof}
 
 \subsection{An upper bound on $K_G^{\mathbb{H}}$}
 \begin{theorem}\label{upbundKGH}   Let $x_0$ be the unique solution of \eqref{defx0}.  Then 
 \begin{equation}\label{upbundKGH1}
 K_G^{\mathbb{H}}\le \frac{64}{9\pi(x_0+1)}\approx 1.2168.
 \end{equation}
 \end{theorem}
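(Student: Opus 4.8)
The plan is to run the Krivine--Haagerup rounding scheme over $\mathbb{H}$, combining three ingredients already in place: the kernel trick (Lemma~\ref{kerneltrick}), the quaternionic Gaussian identity of Theorem~\ref{uvfuvfor}, and the sign pattern of the Taylor coefficients of $p^{-1}$ from Theorem~\ref{nonpositpinvcorf}, which is exactly what makes the relevant power series admissible for the kernel trick. Since $K_G^{\mathbb{H}}$ is the least constant in $\|M\|_{G,\mathbb{H}}\le K\,\|M\|_{\infty,1,\mathbb{H}}$ and, by Proposition~\ref{existc0}, $\frac{64}{9\pi(x_0+1)}=1/c_0$, it suffices to show that for every $M\in\mathbb{H}^{m\times n}$,
\[
c_0\,\|M\|_{G,\mathbb{H}}\le\|M\|_{\infty,1,\mathbb{H}}.
\]

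Fix $M$. By the characterization~\eqref{quatgrotnorm} and Gram--Schmidt (Lemma~\ref{GSP}), choose unit vectors $x_1,\dots,x_m,y_1,\dots,y_n\in\mathbb{H}^{m+n}$ with $\|M\|_{G,\mathbb{H}}=\Re\sum_{i,j}M_{ij}\langle x_i,y_j\rangle$. Apply the kernel trick to these $m+n$ vectors with the power series
\[
g(z)=\sum_{k=0}^{\infty}c_{2k+1}\,c_0^{2k+1}\,z^{2k},
\]
which has real coefficients, involves only even powers, and is analytic on the closed unit disk since $g(z)=p^{-1}(c_0 z)/z$ while $p^{-1}$ is analytic on a disk of radius $r\ge 1>c_0$. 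Crucially $z\,g(z)=p^{-1}(c_0 z)$ and $\sum_{k}|c_{2k+1}c_0^{2k+1}|=\psi(c_0)=1$ by Proposition~\ref{existc0}, so $g$ meets the hypotheses of Lemma~\ref{kerneltrick} (here Theorem~\ref{nonpositpinvcorf} is what guarantees $\psi$ in~\eqref{defpsix} is finite). Lemma~\ref{kerneltrick} then produces unit vectors $u_i,v_j$, which after a further Gram--Schmidt we may take in a common $\mathbb{H}^{N}$, with $\langle u_i,v_j\rangle=\langle x_i,y_j\rangle\,g(|\langle x_i,y_j\rangle|)$.

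The heart of the matter is the identity $P(\langle u_i,v_j\rangle)=c_0\,\langle x_i,y_j\rangle$. Write $\langle x_i,y_j\rangle=s\,\omega$ with $s=|\langle x_i,y_j\rangle|\in[0,1]$ and $\omega\in\mathrm{S}^3$ (the case $\langle x_i,y_j\rangle=0$ being trivial); then $\langle u_i,v_j\rangle=\bigl(s\,g(s)\bigr)\,\omega$, and the scalar $s\,g(s)=p^{-1}(c_0 s)$ is a nonnegative real lying in $[0,1)$. Since $P(z)=z\,f_{\mathbb{H}}(|z|)$ by~\eqref{defP(z)} and $p(t)=t\,f_{\mathbb{H}}(t)$ for $t\in[0,1)$ by~\eqref{fHform}, the real factor moves past $\omega$ and
\[
P(\langle u_i,v_j\rangle)=\bigl(s\,g(s)\bigr)\,\omega\,f_{\mathbb{H}}\bigl(s\,g(s)\bigr)=\omega\,p\bigl(p^{-1}(c_0 s)\bigr)=c_0 s\,\omega=c_0\,\langle x_i,y_j\rangle.
\]
This is the step I expect to require the most care: over $\mathbb{H}$ one cannot realize $p^{-1}$ of a nonreal quaternion coefficientwise, but the kernel trick rescales only the modulus $s$ while leaving the direction $\omega$ untouched, which is precisely what allows $P$ and $p^{-1}$ to cancel.

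Finally, apply Theorem~\ref{uvfuvfor} to each pair $u_i,v_j$ in $\mathbb{H}^{N}$, so that $P(\langle u_i,v_j\rangle)=\int_{\mathbb{H}^{N}}\sign\langle u_i,z\rangle\,\sign\langle z,v_j\rangle\,G_N^{\mathbb{H}}(z)\,dm(z)$. Multiplying by $M_{ij}$, summing over $i,j$, and taking real parts,
\[
c_0\,\Re\sum_{i,j}M_{ij}\langle x_i,y_j\rangle=\int_{\mathbb{H}^{N}}\Re\Bigl(\sum_{i,j}M_{ij}\,\sign\langle u_i,z\rangle\,\sign\langle z,v_j\rangle\Bigr)\,G_N^{\mathbb{H}}(z)\,dm(z).
\]
For each fixed $z$, set $\varepsilon_i=\sign\langle z,u_i\rangle$ and $\delta_j=\sign\langle z,v_j\rangle$, so that $\bar\varepsilon_i=\sign\langle u_i,z\rangle$ and $|\varepsilon_i|,|\delta_j|\le 1$; then the inner integrand is $\Re\sum_{i,j}M_{ij}\bar\varepsilon_i\delta_j\le\|M\|_{\infty,1,\mathbb{H}}$ by the version of~\eqref{quatinfty1norm} with the constraints $|\varepsilon_i|=|\delta_j|=1$ relaxed to inequalities. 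As $G_N^{\mathbb{H}}$ is a probability density, integrating gives $c_0\,\Re\sum_{i,j}M_{ij}\langle x_i,y_j\rangle\le\|M\|_{\infty,1,\mathbb{H}}$, and taking the supremum over the admissible $x_i,y_j$ yields $c_0\,\|M\|_{G,\mathbb{H}}\le\|M\|_{\infty,1,\mathbb{H}}$. Hence $K_G^{\mathbb{H}}\le 1/c_0=\frac{64}{9\pi(x_0+1)}\approx 1.2168$. Beyond Theorem~\ref{nonpositpinvcorf}, already proven (without it neither $\psi$ nor $c_0$ would exist), the only genuine work is the bookkeeping in the displayed cancellation and the check that $g$ satisfies all the hypotheses of the kernel trick.
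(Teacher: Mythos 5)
Your proposal is correct and follows essentially the same route as the paper: the kernel trick with $g(z)=\sum_k c_{2k+1}c_0^{2k+1}z^{2k}$ (admissible because $\psi(c_0)=1$, which rests on Theorem~\ref{nonpositpinvcorf}), the Gaussian sign identity of Theorem~\ref{uvfuvfor} to realize $P(\langle u_i,v_j\rangle)$ as an average of rank-one sign rounds, and the cancellation $P(\langle u_i,v_j\rangle)=c_0\langle x_i,y_j\rangle$ yielding $\|M\|_{G,\mathbb{H}}\le c_0^{-1}\|M\|_{\infty,1,\mathbb{H}}$. Your explicit polar-form verification of that cancellation (rescaling only the modulus $s$ while the direction $\omega$ commutes with the real factor $f_{\mathbb{H}}$) is a welcome elaboration of a step the paper leaves implicit, but it is not a different argument.
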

The same value of an upper on  the constant $K_{G,4}^\mathbb{R}$: $1.216786$ was calculated in \cite[Table 1, p' 81]{Bri2}.  The authors explain on \S6, that the computation of Table 1 are ``just numerical, and do not yield a formal
proof''.

 \begin{proof}  Recall previous definition of $P(z)$ and $\psi(x)$
\[
 P^{-1}(z)=\arg z\, p^{-1}(|z|)=\sum_{k=0}^{\infty} c_{2k+1}z |z|^{2k}.
\]
\[
\psi(c_0) = \sum_{k=0}^{\infty}|c_{2k+1}|c_0^{2k+1} = 1. 
\]
Recall Lemma \eqref{kerneltrick}. Let $g(z)=\sum_{k=0}^\infty c_{2k+1} c_0^{2k+1}z^{2k}$.  Then $p^{-1}(c_0|z|)=|z|g(|z|)$.  Given unit vectors $x_1,\dots,x_m, y_1,\dots y_n$ in a quaternionic Hilbert space $\mathcal{H}$, then there exists unit vectors $u_1,\dots,u_m,v_1,\dots,v_n$ in a quaternionic Hilbert space $\mathcal{H}'$ such that $P^{-1}(c_0\langle x_i, y_j\rangle)=\langle u_i,v_j\rangle$.  Let $\mathcal{H}_1$ be an $l$-dimensional subspace of $\mathcal{H}'$ spanned by $u_1,\dots,u_m,v_1,\dots,v_n$.
Thus we can assume that $\mathcal{H}_1=\mathbb{H}^l$, where $l\le m+n$.
Assume that 
\noindent
$\max_{\lvert \varepsilon_i \rvert = \lvert \delta_j \rvert = 1}\biggl|\sum_{i=1}^m\sum_{j=1}^n M_{ij} \bar \varepsilon_i \delta_j\biggr|\leq 1$.
 Then
\[
\biggl|\sum_{i=1}^m\sum_{j=1}^n M_{ij}  \sign\langle u,{z}\rangle\sign\langle {z},v_j \rangle G_l^\mathbb{H}({z}) \biggr|\leq  G_l^\mathbb{H}({z}), \quad z\in \mathbb{H}^l.
\]
Integrate over the Lebesgue measure of $\mathbb{H}^l$ to get
$\biggl|\sum_{i=1}^m\sum_{j=1}^n M_{ij}  P (\langle u_i, v_j \rangle) \biggr|\leq  1$.
Now
\begin{eqnarray*}
1\ge \biggl| \sum_{i=1}^m\sum_{j=1}^n M_{ij} P (\langle u_i, v_j \rangle)\biggr|=
\biggl| \sum_{i=1}^m\sum_{j=1}^n M_{ij} c_0\langle  x_{i},y_{j}\rangle\biggr| .
\end{eqnarray*}
Take maximum on unit vectors $x_1,\dots,x_m,y_1,\dots,y_n$ to deduce that $\|M\|_G\le 1/c_0$.
\end{proof}

\subsection{The function $p(x)$ \eqref{taylorforp} and the constant $K_{G}^\mathbb{H}$}

We claim that the function $p(x)$ \eqref{taylorforp} is the function $\varphi_4^{\mathbb{R}}$  \cite[(40)]{FL20}.   The proof is very similar to part (ii) of Lemma 3.1 in \cite{FL20}.
Hence we have inequaities:
$K_{G,4}^\mathbb{R}\le K_{G,2}^\mathbb{C}\le K_{G}^\mathbb{H}$.

\section{Symmetric Versions of Grothendieck Inequality for quaternions}
In this section, we address the symmetric version of Grothendieck inequality for quaternions. Note that $\mathbb{S}^n(\mathbb{H})$ is the set of $n\times n$ quaternions matrices $A$ satisfying $A^*=A$ and $\mathbb{S}_+^n(\mathbb{H})$ is the set of positive semidefinite matrices $A$ satisfying $u^* A u \geq 0$ for all $u \in \mathbb{H}^n$.  There is no natural left or right action of quaternion scalar on the space $\mathbb{S}^n(\mathbb{H})$, so we should view it as a real vector space. Recall that on $\mathbb{S}^n(\mathbb{C})$ the inner product is given by $\langle A,B\rangle =\tr A^* B=\tr AB\in \mathbb{R}$.  Unfortunately, for $A,B\in \mathbb{S}^n(\mathbb{H})$, where $n\ge 2$ the $\tr A B$ does not to have to be real.  Since we view $\mathbb{S}^n(\mathbb{H})$ as a real vector space, we define an inner product on  $\mathbb{S}^n(\mathbb{F})$ as $\Re \tr AB=\frac{1}{2} \tr( AB +BA)$.  This definition is identical with the standard definition of the inner product for $\mathbb{F}\in \{\mathbb{R},\mathbb{C})$ and gives the right definition for quaternions.

We consider the quantities:
\[
\Re\sum_{i=1}^n \sum_{j=1}^na_{ij}\langle x_i, x_j \rangle , \quad \Re \sum_{i=1}^n \sum_{j=1}^n  a_{ij}\bar \delta_i\delta_j.
\] 
As in \S\ref{subsec: quatSDP} we can compute the maximum of the first term using SDP.  Also, as in \cite{FL20}, in this section we modify the definition of $\sign z$ for $z\in \mathbb{H}$:
\begin{eqnarray*}\label{eq:sign}
\sign z= \begin{cases}
z/\lvert z \rvert & z\ne 0,\\
1 & z = 0.
\end{cases}
\end{eqnarray*} 
This will yield the equality $|\sign z|=1$ for all $z$.  Clearly Theorem \ref{uvfuvfor} will still apply for this definition of $\sign z$.
\subsection{Symmetric Grothendieck inequality}
Denote by $\mathbb{S}^n_o(\mathbb{H})$
the real subspace of all quaternion self-adjoint matrices with zero diagonal.  Then $A\in \mathbb{S}^n_o(\mathbb{H})$ is of the form $A=D+A_0$ where $A_0=(a_{ij,0})\in\mathbb{S}^n_o(\mathbb{H})$ and $D$ is a real diagonal matrix.
So $\tr A=\tr D$. Observe that 
\begin{eqnarray*}
 \sum_{i=1}^n \sum_{j=1}^n a_{ij}\langle x_i, x_j \rangle=\tr A +\sum_{i=1}^n \sum_{j=1}^n a_{ij,0}\langle x_i, x_j \rangle \textrm { if } \|x_i\|=1.
\end{eqnarray*}
For $A\in \mathbb{S}^n(\mathbb{H})$ we consider the following quantities, which are  analogous to the quantities we introduced in \cite{FL20}:
\begin{eqnarray*}
&\|A\|_\theta= \max _{|\delta_i|=1}\biggl| \Re\sum_{i=1}^n \sum_{j=1}^n  a_{ij} \bar\delta_i \delta_j  \biggr|,\quad
&\|A\|_\Theta= \max _{|\delta_i|\le1}\biggl| \Re\sum_{i=1}^n \sum_{j=1}^n  a_{ij}\bar \delta_i\delta_j  \biggr|,\\
&\|A\|_\gamma=\max _{\|x_i\|=1}\biggl|\Re \sum_{i=1}^n \sum_{j=1}^n a_{ij}\langle x_i, x_j \rangle\biggr| , \quad &\|A\|_\Gamma=\max _{\|x_i\|\le1}\biggl|\Re \sum_{i=1}^n \sum_{j=1}^n a_{ij}\langle x_i, x_j \rangle\biggr|.
\end{eqnarray*}
Clearly $\|A\|_\theta\le \|A\|_\gamma$ and $\|A\|_\Theta\le \|A\|_\Gamma$.
As in \cite{FL20} we observe that $\|A\|_\theta=0$ if and only if $A$ is a a diagonal matrix, and $\tr A=0$.    Indeed, observe that
\begin{eqnarray*}
\sum_{\delta_i\in\{-1,1\}} \sum_{i=1}^n\sum_{j=1}^n a_{ij} \bar\delta_i\delta_j=2^n\tr A.
\end{eqnarray*}
Assume that $\|A\|_\theta=0$.  Then we obtain that $\tr A=0$.  Hence $\|A\|_\theta=\|A_0\|_\theta=0$.  
Observe next that 
\begin{eqnarray*}
\sum_{\delta_i\in\{-1,1\}, i\ge 3} \sum_{i=1}^n\sum_{j=1}^n a_{ij}=2^{n-2} (a_{12}\bar\delta_1\delta_2 +a_{21} \bar\delta_2\delta_1)=2^{n-2} (a_{12}\bar\delta_1\delta_2 +\bar a_{12} \bar\delta_2\delta_1).
\end{eqnarray*}
Suppose that $a_{12}\ne 0$.   Choose $\delta_1=\sign a_{12}$  and $\delta_2=1$.
Then $(a_{12}\bar\delta_1\delta_2 +\bar a_{12} \bar\delta_2\delta_1)=2|a_{12}|$ which is a real number.  Hence the assumption that $\|A_0\|_\theta=0$ yields that $a_{12}=0$.  Similarly we deduce that $A_0=0$.  Hence $\|A\|_\gamma=0$ if and only if $A$ is a diagonal matrix with zero trace. Denote by $\mathbb{S}^n_=(\mathbb{H})$ the real subspace of self-adjoint matrices $A=(a_{ij})$ where $a_{11}=\dots=a_{nn}$.
Then $\|A\|_\theta$ and  $\|A\|_\gamma$ are norms on  $\mathbb{S}^n_=(\mathbb{H})$.

We now claim that $\|A\|_\Gamma$ is a norm on $\mathbb{S}^n(\mathbb{H})$.  Clearly, $\|A\|_\Gamma$ is a seminorm.  Choose $\delta_i=1$ and $\delta_j=0$ for $i\ne j$.  Then $|\Re\sum_{i=1}^n\sum_{j=1}^n a_{ij}\bar\delta_i\delta_j|=|a_{ii}|=0$.  Hence $A=A_0$.  As $\|A_0\|_\theta\le \|A\|_\Theta=0$ we deduce that $A_0=0$.  Thus $A=0$ and $\|\cdot\|_\Theta$ is a norm.  As $\|A\|_\Theta\le \|A\|_\Gamma$ it follows that $\|\cdot\|_\Gamma$ is a norm on   $\mathbb{S}^n(\mathbb{H})$.

Let $\mathcal{D}^n\subset \mathbb{S}^n_+(\mathbb{R})$ be the convex subset of all positive semidefinite diagonal matrices whose diagonal entries are in $[0,1]$.
As in \cite{FL20} it is straightforward to show that 
\begin{eqnarray}\label{charTGnorms}
\|A\|_\Theta=\max\{\|DAD\|_\theta, D\in \mathcal{D}^n\}, \quad \|A\|_\Gamma=\max\{\|DAD\|_\gamma, D\in \mathcal{D}^n\}.
\end{eqnarray}
Let $K_\gamma^\mathbb{H},K_\Gamma^\mathbb{H}$ the smallest possible constant,
(which in principle may equal to $\infty$), for which on has the inequalities
\begin{equation}\label{eq:SGI}
\lVert A \rVert_\gamma \le K_\gamma^\mathbb{H} \lVert A \rVert_\theta, \qquad \lVert A \rVert_\Gamma \le K_\Gamma^\mathbb{H} \lVert A \rVert_\Theta.
\end{equation}
\begin{theorem}[Symmetric Grothendieck inequality]\label{symgro}
The symmetric Grothendieck constants satisfy the following relations
\begin{eqnarray}\label{basGHin}
K_G^\mathbb{H}\le K_\Gamma^\mathbb{H} \leq K_\gamma^\mathbb{H} \leq \frac{64}{9\pi}-1\approx 1.263537.
\end{eqnarray}
\end{theorem}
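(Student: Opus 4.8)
The plan is to prove \eqref{basGHin} from right to left, in the spirit of \cite{FL20}. The inequality $K_\Gamma^\mathbb{H}\le K_\gamma^\mathbb{H}$ is immediate from the characterization \eqref{charTGnorms}: if $\lVert B\rVert_\gamma\le K_\gamma^\mathbb{H}\lVert B\rVert_\theta$ for all $B\in\mathbb{S}^n(\mathbb{H})$, then applying this with $B=DAD$ for each $D\in\mathcal D^n$ and maximizing over $D$ gives $\lVert A\rVert_\Gamma=\max_D\lVert DAD\rVert_\gamma\le K_\gamma^\mathbb{H}\max_D\lVert DAD\rVert_\theta=K_\gamma^\mathbb{H}\lVert A\rVert_\Theta$. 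For $K_G^\mathbb{H}\le K_\Gamma^\mathbb{H}$ one passes to the self-adjoint dilation $A(M)=\begin{bmatrix}0&M\\ M^*&0\end{bmatrix}\in\mathbb{S}^{m+n}(\mathbb{H})$: the computation already made in the proof of Lemma~\ref{SDPGHchar} gives $\lVert M\rVert_{G,\mathbb{H}}=\tfrac12\lVert A(M)\rVert_\Gamma$, and running the same computation with quaternions $\delta_i$ of modulus $\le 1$ in place of unit vectors gives $\lVert M\rVert_{\infty,1,\mathbb{H}}=\tfrac12\lVert A(M)\rVert_\Theta$; hence $\lVert M\rVert_{G,\mathbb{H}}\le K_\Gamma^\mathbb{H}\lVert M\rVert_{\infty,1,\mathbb{H}}$.

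The substantive inequality is $K_\gamma^\mathbb{H}\le\tfrac{64}{9\pi}-1$. Fix $A\in\mathbb{S}^n(\mathbb{H})$; after replacing $A$ by $-A$ if necessary, choose unit vectors $x_1,\dots,x_n$ with $\lVert A\rVert_\gamma=\Re\sum_{i,j}a_{ij}\langle x_i,x_j\rangle$. Gaussian rounding enters via Theorem~\ref{uvfuvfor}: setting $\delta_i(z)=\sign\langle z,x_i\rangle$ for a quaternionic Gaussian $z$ one has $\overline{\delta_i(z)}=\sign\langle x_i,z\rangle$, so $\mathbb{E}_z\bigl[\overline{\delta_i(z)}\,\delta_j(z)\bigr]=P(\langle x_i,x_j\rangle)$; writing $s_{ij}:=P(\langle x_i,x_j\rangle)$ we get $\Re\sum_{i,j}a_{ij}s_{ij}=\mathbb{E}_z\bigl[\Re\sum_{i,j}a_{ij}\overline{\delta_i(z)}\,\delta_j(z)\bigr]$, and since $|\delta_i|=1$ (with the convention $\sign 0=1$) each integrand is $\le\lVert A\rVert_\theta$ in absolute value, giving $\bigl|\Re\sum_{i,j}a_{ij}s_{ij}\bigr|\le\lVert A\rVert_\theta$. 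The key claim is that the analogous estimate holds for the ``odd Schur powers'' $s_{ij}^{(k)}:=s_{ij}|s_{ij}|^{2k}$, i.e.\ for the correlation matrices $S^{(k)}=S\circ(S\circ\overline S)^{\circ k}$ (self-adjoint positive semidefinite by the quaternionic Schur product theorem, since $S\circ\overline S=(|s_{ij}|^2)$ is real positive semidefinite): namely $\bigl|\Re\sum_{i,j}a_{ij}s_{ij}^{(k)}\bigr|\le\lVert A\rVert_\theta$ for every $k\ge0$.

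Granting this, I would finish as follows. Since $p\colon[-1,1]\to[-1,1]$ is a bijection and $P(z)=(\arg z)\,p(|z|)$, the inverse is $P^{-1}(w)=(\arg w)\,p^{-1}(|w|)=\sum_{k\ge0}c_{2k+1}\,w|w|^{2k}$ with the coefficients of \eqref{invexpansionp}; since $|s_{ij}|\le1$ and $\sum_k|c_{2k+1}|<\infty$, applying this entrywise recovers $\langle x_i,x_j\rangle=P^{-1}(s_{ij})=\sum_{k\ge0}c_{2k+1}s_{ij}^{(k)}$ with absolute convergence. By Theorem~\ref{nonpositpinvcorf}, $c_1=\tfrac{32}{9\pi}>0$ and $c_{2k+1}<0$ for $k\ge1$, and by Proposition~\ref{existc0}, $\sum_{k\ge0}|c_{2k+1}|=c_1+(c_1-1)=\tfrac{64}{9\pi}-1$; hence
\[
\lVert A\rVert_\gamma=\Re\sum_{i,j}a_{ij}\langle x_i,x_j\rangle=\sum_{k\ge0}c_{2k+1}\Re\sum_{i,j}a_{ij}s_{ij}^{(k)}\le\Bigl(\sum_{k\ge0}|c_{2k+1}|\Bigr)\lVert A\rVert_\theta=\Bigl(\tfrac{64}{9\pi}-1\Bigr)\lVert A\rVert_\theta ,
\]
which together with the first paragraph proves \eqref{basGHin}.

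I expect the main obstacle to be precisely the claim that each $S^{(k)}$ is again $\lVert\cdot\rVert_\theta$-bounded, i.e.\ again a convex average of rank-one self-adjoint matrices with unit-modulus entries. Over $\mathbb{R}$ and over $\mathbb{C}$ this is transparent, because entrywise products of signs, respectively of unimodular complex numbers, remain of the same type, so $S^{(k)}$ is visibly another such average. Over $\mathbb{H}$ the obvious ``$\omega_i=\delta_i\zeta_i$'' identification fails because of noncommutativity, and instead I would argue through the real factor: $S\circ\overline S=(|s_{ij}|^2)$ and all its Schur powers are real positive semidefinite matrices with unit diagonal, hence Gram matrices of unit Euclidean vectors, and can be written as averages of matrices of the form $\tfrac12(cc^*+\overline{cc^*})$ with $c$ a vector of unit quaternions; Schur-multiplying such a real matrix $M$ into $S$ produces entries $\varepsilon_i M_{ij}\overline{\varepsilon_j}$, i.e.\ a diagonal-unitary conjugation $\diag(\varepsilon_i)\,M\,\diag(\varepsilon_i)^*$, and using the quaternionic Schur product theorem together with the invariance $\lVert D^*AD\rVert_\theta=\lVert A\rVert_\theta$ for diagonal unitary $D$ one should obtain that $\lVert\cdot\rVert_\theta$-boundedness is preserved, hence (by induction on $k$) for all $S^{(k)}$. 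Carrying this out carefully is where essentially all of the quaternion-specific difficulty lies; everything else rests on Theorems~\ref{uvfuvfor} and \ref{nonpositpinvcorf} and Proposition~\ref{existc0}.
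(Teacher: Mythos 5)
Your overall architecture coincides with the paper's: the first two inequalities are handled exactly as in the paper (via the computation in Lemma \ref{SDPGHchar} and the characterization \eqref{charTGnorms}), and the third is reduced, using Theorem \ref{uvfuvfor}, Theorem \ref{nonpositpinvcorf} and Proposition \ref{existc0}, to the single claim
\[
\Bigl|\Re\sum_{i,j} a_{ij}\,P(\langle x_i,x_j\rangle)\,|P(\langle x_i,x_j\rangle)|^{2k}\Bigr|\le \lVert A\rVert_\theta ,\qquad k\ge 0,
\]
which you correctly identify as the crux. But your proposed proof of that claim has a genuine gap, for two reasons. First, the decomposition you invoke is unavailable: a general $n\times n$ real correlation matrix is not a convex combination of matrices $\bigl(\Re(c_i\bar c_j)\bigr)$ with unit quaternions $c_i$ --- those have rank at most $4$, while the elliptope has extreme points of rank up to roughly $\sqrt{2n}$, so for large $n$ the matrices $(|s_{ij}|^{2k})$ need not lie in that convex hull. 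Second, even granting such a decomposition, the reduction to a diagonal-unitary conjugation fails over $\mathbb{H}$: Schur-multiplying $S$ by $(c_i\bar c_j)$ produces entries $s_{ij}c_i\bar c_j$, not $c_i s_{ij}\bar c_j=(\diag(c)\,S\,\diag(c)^*)_{ij}$, and Schur-multiplying by the \emph{real} matrix $\bigl(\Re(c_i\bar c_j)\bigr)$ is not a conjugation at all; so the invariance $\lVert D^*AD\rVert_\theta=\lVert A\rVert_\theta$ never applies to the object you actually have. This is precisely the noncommutativity obstruction you flag, and routing through ``the real factor'' does not remove it.

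The paper closes this gap by a different device: iterated Gaussian rounding. It sets
\[
\Phi(x,z_1,\dots,z_{2k+1})=\sign\langle z_1,x\rangle\,\sign\langle x,z_2\rangle\,\sign\langle z_3,x\rangle\cdots\sign\langle z_{2k+1},x\rangle,
\]
a unit quaternion for every $x$ and every $(z_1,\dots,z_{2k+1})$, and shows --- first for $\langle x,y\rangle$ real by applying Theorem \ref{uvfuvfor} $2k+1$ times, then in general via $\Phi(xa,\cdot)=\Phi(x,\cdot)a$ --- that
\[
\int_{(\mathbb{H}^n)^{2k+1}}\overline{\Phi(x,z_1,\dots,z_{2k+1})}\,\Phi(y,z_1,\dots,z_{2k+1})\prod_{p=1}^{2k+1}G^{\mathbb{H}}(z_p)\,dm(z_p)=P(\langle x,y\rangle)\,|P(\langle x,y\rangle)|^{2k}.
\]
With $\delta_i=\Phi(x_i,z_1,\dots,z_{2k+1})$ the integrand $\overline{\delta_i}\delta_j$ is pointwise exactly of the form entering $\lVert A\rVert_\theta$, so the bound follows by integrating; no Schur-product decomposition of $(|s_{ij}|^{2k})$ is needed. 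Your remaining steps --- the identity $\sum_{k\ge 0}|c_{2k+1}|=2c_1-1=\frac{64}{9\pi}-1$ and the termwise summation --- are correct and match the paper.
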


\begin{proof}  The first inequality follows from the proof of Lemma \ref{SDPGHchar}.
The second inequality follows from the characterization \eqref{charTGnorms}.  We now show the third inequality.
Assume that $\|A\|_\theta\le 1$: $\biggl| \Re\sum_{i=1}^n \sum_{j=1}^n a_{ij}\bar \delta_i \delta_j  \biggr| \le 1$ for $|\delta_i|=1$.  
Let $x_1,\dots,x_n$ be unit vectors in a right Hilbert space $\mathcal{H}$.  Hence the span of $x_1,\dots,x_n$ is contained in a subspace of dimension at most $n$.  Thus we shall assume that $x_1,\dots,x_n\in \mathbb{H}^n$.
We claim that 
\begin{eqnarray*}
\biggl| \Re\sum_{i=1}^n \sum_{j=1}^na_{ij} P(\langle x_i, x_j \rangle) |P(\langle x_i, x_j \rangle) |^{2k}\biggr|\le 1, \quad k+1\in\mathbb{N}.
\end{eqnarray*}
Let
\begin{eqnarray*}
\Phi(x,z_1,\dots,z_{2k+1})=\sign \langle z_1,x\rangle \sign \langle x,z_2\rangle \sign \langle z_3,x\rangle \dots   \sign  \langle  z_{2k+1},x\rangle, x, z_1,\dots, z_{2k+1}\in \mathbb{H}^n.
\end{eqnarray*}
We claim that 
\begin{eqnarray*}
\int_{(\mathbb{H}^n)^{2k+1}} \overline{\Phi(x,z_1,\dots,z_{2k+1})} \Phi(y,z_1,\dots,z_{2k+1})  \prod_{p=1}^{2k+1} G^\mathbb{H}(z_p)dm(z_p)=P(\langle x,y\rangle)|P(\langle x,y\rangle)|^{2k}.
\end{eqnarray*}
Assume first that $\langle x,y\rangle$ is a real number.  Then the above equality follows by applying Theorem \ref{uvfuvfor} $2k+1$ times.  Observe next that for $a\in \mathbb{H}, |a|=1$ one has the equality $\Phi(xa,z_1,\dots,z_{2k+1})= \Phi(x,z_1,\dots,z_{2k+1})a$.  Hence by replacing $x$ with $x \sign \langle x,y\rangle$ we deduce the above equality.

Clearly
\begin{eqnarray*}
\biggl| \Re\sum_{i=1}^n \sum_{j=1}^n a_{ij}\overline{\Phi(x_i,z_1,\dots,z_{2k+1})} \Phi(x_j,z_1,\dots,z_{2k+1}) \biggr| \le 1.
\end{eqnarray*}
Hence
\begin{eqnarray*}
&&\biggl| \Re\sum_{i=1}^n \sum_{j=1}^na_{ij} P(\langle x_i, x_j \rangle) |P(\langle x_i, x_j \rangle) |^{2k}\biggr|=\\
&&\biggl| \Re\sum_{i=1}^n \sum_{j=1}^n a_{ij}(\int_{(\mathbb{H}^n)^{2k+1}} \overline{\Phi(x_i,z_1,\dots,z_{2k+1})} \Phi(x_j,z_1,\dots,z_{2k+1}) \prod_{p=1}^{2k+1}G^\mathbb{H}(z_p)dm(z_p))\biggr|\le \\
&&\int_{(\mathbb{H}^n)^{2k+1}}\biggl| \Re\sum_{i=1}^n \sum_{j=1}^n a_{ij} \overline{\Phi(x_i,z_1,\dots,z_{2k+1})} \Phi(x_j,z_1,\dots,z_{2k+1}) \biggr|\prod_{p=1}^{2k+1}G^\mathbb{H}(z_p)dm(z_p)\le 1.
\end{eqnarray*}

Finally
\begin{eqnarray*}
&&\biggl| \Re\sum_{i=1}^n \sum_{j=1}^n a_{ij}\langle x_i, x_j \rangle \biggr| = \biggl| \Re\sum_{i=1}^n \sum_{j=1}^n a_{ij}P^{-1}(P(\langle x_i, x_j \rangle)) \biggr|\\
 &&\leq  \sum_{k=0}^\infty |c_{2k+1}| \biggl| \Re\sum_{i=1}^n \sum_{j=1}^na_{ij} P(\langle x_i, x_j \rangle) |P(\langle x_i, x_j \rangle) |^{2k}\biggr|\le \sum_{k=0}^\infty |c_{2k+1}|=\frac{64}{9\pi}-1.
\end{eqnarray*}
\end{proof}
\subsection{Cones of positive semidefinite matrices}
Assume that $A\in\mathbb{S}^n(\mathbb{R}), B\in\mathbb{S}^n(\mathbb{H})$. Then
$\tr AB=\tr BA$.  Hence $\Re\tr AB=\tr AB$.
For real positive definite matrix, the quantities we are considering have a special relation:
\begin{lemma}\label{psdlemma}
For $A\in \mathbb{S}_+^n(\mathbb{R})$,
\[
\max _{\|x_i\|=1}\biggl| \sum_{i=1}^n \sum_{j=1}^n a_{ij}\langle x_i, x_j \rangle \biggr| = \max _{\|x_i\|\leq1}\biggl| \sum_{i=1}^n \sum_{j=1}^n a_{ij}\langle x_i, x_j \rangle \biggr| = \max _{\|x_i\|=1, \|y_j\| = 1}\biggl| \sum_{i=1}^n \sum_{j=1}^n a_{ij}\langle x_i, y_j \rangle \biggr|
\]
where $x_1,\dots ,x_n, y_1,\dots, y_n\in \mathbb{H}^l$ and
\[
\max _{|\delta_i|=1}\biggl| \sum_{i=1}^n \sum_{j=1}^n a_{ij}\langle \delta_i, \delta_j \rangle \biggr| = \max _{|\delta_i|\leq1}\biggl| \sum_{i=1}^n \sum_{j=1}^n a_{ij}\langle \delta_i, \delta_j \rangle \biggr| = \max _{|\delta_i|=1, |\epsilon_j|=1}\biggl| \sum_{i=1}^n \sum_{j=1}^n a_{ij}\langle \delta_i, \epsilon_j \rangle \biggr|
\]
where $\delta_1,\dots ,\delta_n, \epsilon_1, \dots, \epsilon_n \in \mathbb{H}$.
\end{lemma}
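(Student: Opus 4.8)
The plan is to handle the three vector quantities and the three scalar quantities in parallel, since one mechanism drives both. Set $Q(x_1,\dots,x_n) \coloneqq \sum_{i,j} a_{ij}\langle x_i,x_j\rangle$. Because $A=A^{\tp}$ is real, expanding each $\langle x_i,x_j\rangle$ into the $4l$ real coordinate slices of the $x_i$ gives $Q = \sum_{s} \xi_s^{\tp} A\,\xi_s$ for suitable slices $\xi_s\in\mathbb{R}^n$; hence $Q$ is a \emph{real} number, $Q\ge 0$ since $A\in\mathbb{S}^n_+(\mathbb{R})$, and $Q$ is a positive semidefinite (in particular convex) quadratic form in the real coordinates of $(x_1,\dots,x_n)$. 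The same remarks apply verbatim to $\delta\mapsto\delta^*A\delta=\sum_{i,j}a_{ij}\bar\delta_i\delta_j$ on $\mathbb{H}^n$. Since $Q\ge 0$, the absolute values around expressions of the form $\lvert\sum a_{ij}\langle x_i,x_j\rangle\rvert$ are vacuous, so those two quantities are genuine maxima of $Q$, and likewise for the scalar case.

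First I would prove the equality of the ``$=1$'' and ``$\le 1$'' versions. The domain $\prod_{i=1}^n\{x_i\in\mathbb{H}^l:\|x_i\|\le 1\}$ is compact and convex, and its extreme points are exactly the tuples with $\|x_i\|=1$ for every $i$ (the extreme points of a Euclidean ball form its boundary sphere, and extreme points of a product of convex sets are products of extreme points). A continuous convex function on a compact convex set attains its maximum at an extreme point, so $\max_{\|x_i\|\le 1}Q = \max_{\|x_i\|=1}Q$; the reverse inequality is trivial. The identical argument, with $\mathbb{H}^n$ in place of $(\mathbb{H}^l)^n$ and $\delta^*A\delta$ in place of $Q$, gives $\max_{|\delta_i|\le 1}\lvert\delta^*A\delta\rvert = \max_{|\delta_i|=1}\lvert\delta^*A\delta\rvert$.

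Next I would compare the ``square'' and ``rectangular'' versions. The bound $\max_{\|x_i\|=1,\|y_j\|=1}\ge\max_{\|x_i\|=1}$ is immediate by taking $y_j=x_j$. For the reverse, write $A=(\langle a_i,a_j\rangle)$ with $a_1,\dots,a_n\in\mathbb{R}^r$ (columns of $A^{1/2}$, or a Cholesky factor). Since $\langle a_i,a_j\rangle\in\mathbb{R}$, Lemma~\ref{teninprodform}(1) yields $\sum_{i,j}a_{ij}\langle x_i,y_j\rangle=\bigl\langle \sum_i a_i\otimes x_i,\ \sum_j a_j\otimes y_j\bigr\rangle$, and the Cauchy--Schwarz inequality for the quaternion inner product bounds its modulus by $\bigl\|\sum_i a_i\otimes x_i\bigr\|\,\bigl\|\sum_j a_j\otimes y_j\bigr\|$. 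But $\bigl\|\sum_i a_i\otimes x_i\bigr\|^2=\sum_{i,j}\langle a_i,a_j\rangle\langle x_i,x_j\rangle=Q(x_1,\dots,x_n)$, and similarly for the $y_j$, so for unit vectors the modulus is at most $\sqrt{Q(x)Q(y)}\le\max(Q(x),Q(y))\le\max_{\|x_i\|=1}Q$. Taking the maximum over all $x_i,y_j$ gives the reverse inequality, so all three vector quantities agree. Repeating this computation with each $x_i$ replaced by a scalar $\delta_i\in\mathbb{H}=\mathbb{H}^1$ (so $a_i\otimes\delta_i=a_i\delta_i\in\mathbb{H}^r$) gives the rectangular identity for the scalar quantities. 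If one wished to allow $x_i,y_j$ in an arbitrary quaternion Hilbert space, the Gram--Schmidt process (Lemma~\ref{GSP}) reduces to $\mathbb{H}^l$ with $l\le 2n$ without loss.

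I expect the only real subtleties to be bookkeeping: verifying that $Q$ really is a positive semidefinite quadratic form in the ambient real coordinates (so that convexity and the extreme-point argument apply), and checking that Lemma~\ref{teninprodform}(1) is legitimately invoked --- which it is, precisely because $A$ has real entries and hence all $\langle a_i,a_j\rangle$ are real; this is exactly where positive semidefiniteness over $\mathbb{R}$ rather than over $\mathbb{H}$ enters. The passage from a correlation matrix to an explicit tuple of unit vectors, where it is needed, is the Cholesky/spectral realization recalled in \S\ref{subsec:Schurquat}.
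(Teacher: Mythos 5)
Your proof is correct, and its decisive step is the same as the paper's: factor $A=B^{\tp}B$ (equivalently $a_{ij}=\langle a_i,a_j\rangle$ with real $a_i$), view $\sum_{i,j}a_{ij}\langle x_i,y_j\rangle$ as an inner product of two elements built from the $x_i$ and $y_j$, and apply Cauchy--Schwarz to bound the rectangular quantity by the square one; the paper works with $\sum_p\bigl\langle\sum_i x_ib_{ip},\sum_j y_jb_{jp}\bigr\rangle$ in $\mathbb{H}^{l\times n}$ while you phrase the same computation via $\sum_i a_i\otimes x_i$ and Lemma~\ref{teninprodform}(1), which is legitimate since the $\langle a_i,a_j\rangle$ are real. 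Where you genuinely diverge is the ``$\|x_i\|\le 1$'' versus ``$\|x_i\|=1$'' equality: the paper disposes of it by sandwiching, asserting $\max_{\|x_i\|\le1}\le\max_{\|x_i\|=1,\|y_j\|=1}$ ``by definition'' (which in fact needs a small padding argument, e.g.\ enlarging the space and appending mutually orthogonal components to normalize the $x_i$ and $y_j$ separately), whereas you give a self-contained argument: $Q$ is a real positive semidefinite, hence convex, quadratic form in the ambient real coordinates, so its maximum over the product of unit balls is attained at an extreme point, i.e.\ with all $\|x_i\|=1$. Your route makes the nonnegativity of $Q$ (so the absolute values are vacuous) and the reduction of $\le1$ to $=1$ fully explicit at the cost of invoking the extreme-point principle; the paper's route is shorter but leans on an inequality it does not justify. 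Both are sound, and your scalar case follows by the same specialization $l=1$ as in the paper.
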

\begin{proof}
Observe that by definition
\[
\max _{\|x_i\|=1}\biggl| \sum_{i=1}^n \sum_{j=1}^n a_{ij}\langle x_i, x_j \rangle \biggr| \leq \max _{\|x_i\|\leq1}\biggl| \sum_{i=1}^n \sum_{j=1}^n a_{ij}\langle x_i, x_j \rangle \biggr| \leq \max _{\|x_i\|=1, \|y_j\| = 1}\biggl| \sum_{i=1}^n \sum_{j=1}^n a_{ij}\langle x_i, y_j \rangle \biggr|.
\]
Define the matrices $X = [x_1, \dots,x_n], Y =  [y_1, \dots,y_n]$. Because $A$ is real positive definite. Then $A=B^2$, where $B=(b_{ij})=A^{1/2}$.
\[
\langle X, Y \rangle = \sum_{i=1}^n \sum_{j=1}^n a_{ij}\langle x_i, y_j \rangle=\sum_{p=1}^n \langle \sum_{i=1}^n x_i b_{ip}, \sum_{j=1}^n y_j b_{jp}\rangle.
\]
is an inner product of the space $\mathbb{H}^{l\times n}$. By Cauchy-Schwarz, we have
\[
|\langle X, Y \rangle | \leq \sqrt{\langle X, X \rangle \langle Y, Y \rangle}  \leq \max _{\|x_i\|=1}\biggl| \sum_{i=1}^n \sum_{j=1}^n a_{ij}\langle x_i, x_j \rangle \biggr|.
\]
So
\[
\max _{\|x_i\|=1, \|y_j\| = 1}\biggl| \sum_{i=1}^n \sum_{j=1}^n a_{ij}\langle x_i, y_j \rangle \biggr| \leq \max _{\|x_i\|=1}\biggl| \sum_{i=1}^n \sum_{j=1}^n a_{ij}\langle x_i, x_j \rangle \biggr|
\]
and the first equality holds. By taking $l = 1$ in the previous argument, we see that the second equality holds.
\end{proof}

The next theorem is a generalization of the Nesterov $\pi/2$-Theorem.
\begin{theorem}\label{symgro}
Let $A = (a_{ij})\in \mathbb{S}^n_+(\mathbb{R})$ is a symmetric positive semidefinite matrices. Then
\begin{equation}
\max _{\|x_i\|=1}\biggl| \sum_{i=1}^n \sum_{j=1}^n a_{ij}\langle x_i, x_j \rangle \biggr| \leq \frac{32}{9\pi} \max _{|\delta_i|=1}\biggl| \sum_{i=1}^n \sum_{j=1}^n a_{ij}\langle \delta_i, \delta_j \rangle \biggr|.
\end{equation}
where $x_1,\dots ,x_n\in \mathbb{H}^l$, $\delta_1,\dots ,\delta_n\in \mathbb{H}$. By Lemma~\ref{psdlemma}, we can change the expression in both sides of the inequality.  
\end{theorem}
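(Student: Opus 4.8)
The plan is to run Nesterov's random‑hyperplane rounding argument, with the quaternionic Gaussian identity (Theorem~\ref{uvfuvfor}) playing the role of Grothendieck's identity and the quaternionic Schur product theorem controlling the higher‑order terms. By Lemma~\ref{psdlemma} it is enough to prove the ``diagonal'' form
\[
\max_{\|x_i\|=1}\left|\sum_{i,j=1}^{n} a_{ij}\langle x_i,x_j\rangle\right|\ \le\ \frac{32}{9\pi}\max_{|\delta_i|=1}\left|\sum_{i,j=1}^{n} a_{ij}\bar\delta_i\delta_j\right| ,
\]
and since the right‑hand side does not involve $l$, Gram--Schmidt (Lemma~\ref{GSP}) lets us take the unit vectors $x_1,\dots,x_n$ in $\mathbb{H}^n$. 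First I would record that $S:=\sum_{i,j}a_{ij}\langle x_i,x_j\rangle$ is real (because $A$ is real symmetric and $\overline{\langle x_i,x_j\rangle}=\langle x_j,x_i\rangle$) and nonnegative: writing $G_0=(\langle x_i,x_j\rangle)\in\mathbb{S}^n_+(\mathbb{H})$, the Schur product theorem for quaternions gives $A\circ G_0\in\mathbb{S}^n_+(\mathbb{H})$, so $S=\mathbf{1}^\ast (A\circ G_0)\mathbf{1}\ge 0$; in particular $|S|=S$.

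Next, for $z\in\mathbb{H}^n$ set $\delta_i(z)=\sign\langle z,x_i\rangle$, so $|\delta_i(z)|=1$ (using the convention $\sign 0=1$ of this section) and $\bar\delta_i(z)\delta_j(z)=\sign\langle x_i,z\rangle\,\sign\langle z,x_j\rangle$. Theorem~\ref{uvfuvfor} with $u=x_i,\ v=x_j$ yields
\[
\int_{\mathbb{H}^n}\bar\delta_i(z)\delta_j(z)\,G_n^\mathbb{H}(z)\,dm(z)=\langle x_i,x_j\rangle\, f_{\mathbb{H}}(|\langle x_i,x_j\rangle|)=P(\langle x_i,x_j\rangle).
\]
Summing against $a_{ij}$ and using the expansion \eqref{taylorforp}, i.e. $P(w)=\sum_{k\ge 0}\beta_k\, w\,|w|^{2k}$ with $\beta_k=\frac{9\pi}{16(k+1)(k+2)}\bigl[\frac{(2k-1)!!}{(2k)!!}\bigr]^2>0$ and $\beta_0=\frac{9\pi}{32}$, gives
\[
\int_{\mathbb{H}^n}\Bigl(\sum_{i,j}a_{ij}\bar\delta_i(z)\delta_j(z)\Bigr)G_n^\mathbb{H}(z)\,dm(z)=\sum_{k\ge 0}\beta_k\sum_{i,j}a_{ij}\,\langle x_i,x_j\rangle\,|\langle x_i,x_j\rangle|^{2k}.
\]
Here the interchange of $\int$ and $\sum_k$ is legitimate because the power series for $P$ converges uniformly on $\bar D_{\mathbb{H}}$ and $|\langle x_i,x_j\rangle|\le 1$ by Cauchy--Schwarz.

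The key step is that every inner sum is $\ge 0$: Lemma~\ref{Gramexist} furnishes, for each $k$, unit vectors $x_{1,k},\dots,x_{n,k}$ with $\langle x_{i,k},x_{j,k}\rangle=\langle x_i,x_j\rangle|\langle x_i,x_j\rangle|^{2k}$, so $L^{(k)}:=\bigl(\langle x_i,x_j\rangle|\langle x_i,x_j\rangle|^{2k}\bigr)$ is a positive semidefinite self‑adjoint quaternion matrix; the Schur product theorem gives $A\circ L^{(k)}\in\mathbb{S}^n_+(\mathbb{H})$, whence $\sum_{i,j}a_{ij}\langle x_i,x_j\rangle|\langle x_i,x_j\rangle|^{2k}=\mathbf{1}^\ast(A\circ L^{(k)})\mathbf{1}\ge 0$. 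Discarding the terms $k\ge 1$ therefore gives
\[
\int_{\mathbb{H}^n}\Bigl(\sum_{i,j}a_{ij}\bar\delta_i(z)\delta_j(z)\Bigr)G_n^\mathbb{H}(z)\,dm(z)\ \ge\ \beta_0\, S=\frac{9\pi}{32}\,S.
\]
Since $G_n^\mathbb{H}$ is a probability density and the integrand's integral is the real number $\frac{9\pi}{32}S$ or larger, there is a point $z^\ast$ with $\Re\sum_{i,j}a_{ij}\bar\delta_i(z^\ast)\delta_j(z^\ast)\ge\frac{9\pi}{32}S$. The unit quaternions $\delta_i(z^\ast)$ then give $\frac{9\pi}{32}S\le\Re\sum_{i,j}a_{ij}\bar\delta_i(z^\ast)\delta_j(z^\ast)\le\bigl|\sum_{i,j}a_{ij}\bar\delta_i(z^\ast)\delta_j(z^\ast)\bigr|\le\max_{|\delta_i|=1}\bigl|\sum_{i,j}a_{ij}\bar\delta_i\delta_j\bigr|$; combining with $|S|=S$ and taking the maximum over unit $x_i$ proves the diagonal inequality, and Lemma~\ref{psdlemma} converts it back to the stated mixed forms.

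The only genuinely delicate point is the one forced by noncommutativity: after rounding, the $k$‑th order term is $\langle x_i,x_j\rangle|\langle x_i,x_j\rangle|^{2k}$ rather than a power $\langle x_i,x_j\rangle^{2k+1}$, and it is precisely Lemma~\ref{Gramexist} that identifies this with a Gram matrix so that its positivity (and hence the clean truncation at $\beta_0=\frac{9\pi}{32}$, giving the sharp constant $32/9\pi=1/P'(0)$) survives. Everything else — the reality of $S$, the normalization of the quaternion Gaussian, the $\int/\sum$ interchange — is routine given the tools assembled above.
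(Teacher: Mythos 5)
Your proof is correct and follows essentially the same route as the paper's: the quaternionic Gaussian rounding identity (Theorem~\ref{uvfuvfor}) converts the $\delta$-maximum into $\sum_{i,j}a_{ij}P(\langle x_i,x_j\rangle)$, and the quaternionic Schur product theorem (via Lemma~\ref{Gramexist}) shows each higher-order term of $P$ contributes nonnegatively, so one may truncate at the linear coefficient $\tfrac{9\pi}{32}$. You simply spell out the details the paper compresses into ``$A\circ P(G)-A\circ\tfrac{9\pi}{32}G\succeq 0$'' and add the (correct, and needed) observation that $\sum_{i,j}a_{ij}\langle x_i,x_j\rangle$ is real and nonnegative.
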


\begin{proof}
Assume 
\[
\max _{|\delta_i|=1}\biggl| \sum_{i=1}^n \sum_{j=1}^n a_{ij}\langle \delta_i, \delta_j \rangle \biggr| = 1.
\]
Then for any $x_1,\dots ,x_n\in \mathbb{H}^l$,
\[
\sum_{i=1}^n \sum_{j=1}^n a_{ij} P(\langle x_i,x_j\rangle) \leq 1.
\]
Consider the matrix $G_{ij} = \langle x_i,x_j\rangle$. The first coefficient of $P$ is $\frac{9\pi}{32}$. By the Schur product theorem for quaternions,
\[
A\circ P(G) - A \circ \frac{9\pi}{32}G \succeq 0.
\]
So 
\[
\sum_{i=1}^n \sum_{j=1}^n a_{ij} P(\langle x_i,x_j\rangle) \geq \frac{9\pi}{32} \sum_{i=1}^n \sum_{j=1}^n a_{ij} \langle x_i,x_j\rangle.
\]
So the inequality holds.
\end{proof}

The constant $\frac{9\pi}{32}$ is sharp. In fact, the proof of Theorem 5.1 in \cite{FL20} can be repeated with only minor difference.

\subsection{Cones of weighted Laplacians}
For any matrix $A = (a_{ij})\in \mathbb{S}^n(\mathbb{R})$ with zero diagonal and positive off-diagonal elements, we can define $L_{A}$ by
\[
L_A:= \diag(A \mathbbm{1})-A,
\]
where $\diag(x)$ denotes the diagonal matrix whose diagonal is $x$ and $\mathbbm{1}$ is the vector of all ones. And we can define the set of all weighted Laplacians:
\[
\mathbb{L}^n := \{ L_A : A\in \mathbb{S}^n(\mathbb{R}), a_{ii} = 0, a_{ij}\geq0 \text{ for all }i, j\} = \{L\in \mathbb{S}^n(\mathbb{R}): L\mathbbm{1}=0, l_{ij} \leq 0\text{ for all }i\neq j\}.
\]
We have $\mathbb{L}^n \subseteq \mathbb{S}_+^n(\mathbb{R})$.

\begin{theorem}
Define the quaternion Goemans-Williamson constant:
\[
\alpha_{GW}^{\mathbb{H}}:= \inf_{0\leq x \leq 1}\frac{1+P(x)}{1+x}.
\]

Then for all $L\in \mathbb{L}^n$,
\begin{equation}
\max _{\|x_i\|=1}\biggl| \sum_{i=1}^n \sum_{j=1}^n  l_{ij}\langle x_i, x_j \rangle \biggr| \leq \frac{1}{\alpha_{GW}^{\mathbb{H}}} \max _{|\delta_i|=1}\biggl| \sum_{i=1}^n \sum_{j=1}^n l_{ij}\langle \delta_i, \delta_j \rangle \biggr|.
\end{equation}
where $x_1,\dots ,x_n\in \mathbb{H}^l$, $\delta_1,\dots ,\delta_n\in \mathbb{H}$. The value of the constant is approximate $0.967337$, so $K\leq 1.0338$. By Lemma~\ref{psdlemma}, we can change the expression in both sides of the inequality. 

\end{theorem}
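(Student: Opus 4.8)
The plan is to adapt the Goemans--Williamson random rounding to the quaternions, with the Gaussian integral formula of Theorem~\ref{uvfuvfor} playing the role of Grothendieck's identity. Write $L=L_A\in\mathbb{L}^n$ with $a_{ij}=a_{ji}\ge0$ and $a_{ii}=0$. The first step is a bookkeeping identity: for any vectors $x_1,\dots,x_n$ of norm one in a quaternionic Hilbert space one has
\[
\sum_{i,j=1}^n l_{ij}\langle x_i,x_j\rangle=\sum_{i<j}a_{ij}\|x_i-x_j\|^2=\sum_{i<j}2a_{ij}\bigl(1-\Re\langle x_i,x_j\rangle\bigr)\ge0 ,
\]
and likewise $\sum_{i,j}l_{ij}\bar\delta_i\delta_j=\sum_{i<j}a_{ij}|\delta_i-\delta_j|^2\ge0$ for unit quaternions $\delta_i$; hence both maxima in the statement coincide with the corresponding maxima with the absolute value deleted. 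Applying the same identity to $P$ and using $P(\bar z)=\overline{P(z)}$ shows that $\sum_{i,j}l_{ij}P(\langle x_i,x_j\rangle)=\sum_{i<j}2a_{ij}\bigl(1-\Re P(\langle x_i,x_j\rangle)\bigr)$ is a real number as well.

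The core step is the pointwise comparison $1-\Re P(z)\ge\alpha_{GW}^{\mathbb{H}}\,(1-\Re z)$ for every $z\in\mathbb{H}$ with $|z|\le1$. Writing $P(z)=z\,f_{\mathbb{H}}(|z|)$, I would first note that $f_{\mathbb{H}}(r)=\tfrac32\int_0^{\pi/2}\cos^4t\,(1-r^2\sin^2t)^{-1/2}\,dt$ is nondecreasing on $[0,1]$ and satisfies $f_{\mathbb{H}}(1)=\tfrac32\int_0^{\pi/2}\cos^3t\,dt=1$, so $f_{\mathbb{H}}(r)\le1$ there. For fixed $r=|z|$, the map $s\mapsto\frac{1-s f_{\mathbb{H}}(r)}{1-s}$ has derivative $\frac{1-f_{\mathbb{H}}(r)}{(1-s)^2}\ge0$, so it attains its minimum over $s=\Re z\in[-r,r]$ at $s=-r$, where its value is $\frac{1+r f_{\mathbb{H}}(r)}{1+r}=\frac{1+P(r)}{1+r}$; the infimum of this over $r\in[0,1]$ is precisely $\alpha_{GW}^{\mathbb{H}}$. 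Since $1-\Re z\ge0$ when $|z|\le1$, multiplying by $2a_{ij}\ge0$ and summing over $i<j$ gives $\sum_{i,j}l_{ij}P(\langle x_i,x_j\rangle)\ge\alpha_{GW}^{\mathbb{H}}\sum_{i,j}l_{ij}\langle x_i,x_j\rangle$.

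For the rounding step, take unit vectors $x_1,\dots,x_n$ attaining the left-hand maximum (by Gram--Schmidt they may be assumed to lie in $\mathbb{H}^n$), draw a quaternion Gaussian $z\in\mathbb{H}^n$, and set $\delta_i=\sign\langle z,x_i\rangle$, which are unit quaternions almost surely (or by the convention $\sign 0=1$). Then $\langle\delta_i,\delta_j\rangle=\bar\delta_i\delta_j=\sign\langle x_i,z\rangle\,\sign\langle z,x_j\rangle$, so Theorem~\ref{uvfuvfor} yields $\mathbb{E}\langle\delta_i,\delta_j\rangle=\langle x_i,x_j\rangle\,f_{\mathbb{H}}(|\langle x_i,x_j\rangle|)=P(\langle x_i,x_j\rangle)$. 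By linearity of expectation and the previous step,
\[
\mathbb{E}\sum_{i,j}l_{ij}\langle\delta_i,\delta_j\rangle=\sum_{i,j}l_{ij}P(\langle x_i,x_j\rangle)\ge\alpha_{GW}^{\mathbb{H}}\max_{\|x_i\|=1}\Bigl|\sum_{i,j}l_{ij}\langle x_i,x_j\rangle\Bigr| .
\]
Since $\sum_{i,j}l_{ij}\langle\delta_i,\delta_j\rangle$ is a nonnegative real random variable, some realization attains at least its expectation, and that $(\delta_i)$ competes in $\max_{|\delta_i|=1}\bigl|\sum_{i,j}l_{ij}\langle\delta_i,\delta_j\rangle\bigr|$; rearranging gives the asserted inequality with constant $1/\alpha_{GW}^{\mathbb{H}}$, and the freedom to replace each maximum by the other listed forms is Lemma~\ref{psdlemma}. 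Differentiating the explicit power series for $P$ and minimizing $(1+P(x))/(1+x)$ then gives $\alpha_{GW}^{\mathbb{H}}\approx0.967337$, hence $K\le1.0338$.

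I expect the only non-routine part to be the pointwise estimate: one must verify that the worst case of $(1-\Re P(z))/(1-\Re z)$ over the quaternion disk occurs at real negative $z$, which is exactly what reduces the problem to the one-variable constant $\alpha_{GW}^{\mathbb{H}}$, and which rests on the monotonicity of $f_{\mathbb{H}}$ together with the bound $f_{\mathbb{H}}\le1$ on $[0,1]$. The Laplacian identity and the expectation computation are routine.
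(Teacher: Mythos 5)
Your proof is correct and follows essentially the same route as the paper: reduce the claim to the pointwise bound $1-\Re P(z)\ge \alpha_{GW}^{\mathbb{H}}(1-\Re z)$ on the closed quaternion unit disk and then apply the Gaussian sign-rounding via Theorem~\ref{uvfuvfor}. The only difference is cosmetic: where the paper verifies the pointwise bound by splitting on the sign of $\Re z$ and comparing Taylor coefficients of $P$, you obtain it from the factorization $P(z)=z\,f_{\mathbb{H}}(|z|)$ with $f_{\mathbb{H}}\le 1$ and the monotonicity of $s\mapsto (1-sf)/(1-s)$ — an equivalent, slightly cleaner argument.
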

\begin{proof}
We will show that for $h\in \mathbb{H}$, we have
\[
\alpha_{GW}^{\mathbb{H}}= \inf_{0\leq x \leq 1}\frac{1+P(x)}{1+x} = \inf_{|h| < 1}\frac{1-\Re[ P(h)]}{1-\Re(h)}.
\]

By definition,
\[
\inf_{|h| < 1} \frac{1-\Re[ P(h)]}{1-\Re(h)} \leq \inf_{0\leq x \leq 1} \frac{1+P(x)}{1+x}.
\]
On the other hand, if $\Re(h) \geq 0$, let $h = x+y$, $x = \Re(h)$. The Taylor expansion of $P(x)$ is given in formula~\ref{taylorforp}, $P(x) = \sum_{i=0} b_{2i+1} x^{2i+1}$. $b_{2i+1} \geq 0$ and $P(1) = 1$.
\[
\Re[P(h)] = \sum_{i=0} b_{2i+1} x (x^2 + |y|^2)^i \leq \sum_{i=0} b_{2i+1} x = x.
\]
So
\[
\inf_{|h| < 1, \Re(h) \geq 0} \frac{1-\Re[ P(h)]}{1-\Re(h)} \geq 1 \geq \inf_{0\leq x \leq 1}\frac{1+P(x)}{1+x}.
\]

If $\Re(h) \leq 0$, let $h = -x+y$, $x = -\Re(h)$. Then
\[
1-\Re[P(h)] = 1+ \sum _{i=0} b_{2i+1} x (x^2 + |y|^2)^i \geq 1+\sum_{i=0} b_{2i+1} x^{2i+1} = 1+P(x).
\]
So
\[
\inf_{|h| < 1, \Re(h) \leq 0} \frac{1-\Re[ P(h)]}{1-\Re(h)} \geq \inf_{0\leq x \leq 1} \frac{1+P(x)}{1+x}.
\]

So we have 
\[
\inf_{|h| < 1} \frac{1-\Re[ P(h)]}{1-\Re(h)} \geq \inf_{0\leq x \leq 1} \frac{1+P(x)}{1+x}
\]
and the equality holds.

Let $L_A \in \mathbb{L}^n$. Assume
\[
\max _{|\delta_i|=1}\biggl| \sum_{i=1}^n \sum_{j=1}^n l_{ij}\langle \delta_i, \delta_j \rangle \biggr| = 1.
\]

For any unit vectors $x_1,\dots ,x_n\in \mathbb{H}^l$,
\[
\int_{z\in \mathbb{H}^l} \sum_{i=1}^n\sum_{j=1}^n l_{ij} \sign \langle x_i, z \rangle \sign \langle z, x_j \rangle G_n(z) dm(z) \leq 1.
\]

Because $L$ is a real symmetric matrix, we have
\begin{align*}
& \ \ \ \int_{z\in \mathbb{H}^l} \sum_{i=1}^n\sum_{j=1}^n l_{ij} \sign \langle x_i, z \rangle \sign \langle z, x_j \rangle G_n(z) dm(z)  \\
&= \sum_{i=1}^n\sum_{j=1}^n l_{ij} P(\langle x_i, x_j \rangle)\\
&= \sum_{i=1}^n\sum_{j=1}^n l_{ij} \Re[ P(\langle x_i, x_j \rangle)]\\
&= \sum_{i=1}^n\sum_{j=1}^n a_{ij} (1-\Re [P(\langle x_i, x_j \rangle)])\\
&\geq \alpha_{GW}^{\mathbb{H}} \sum_{i=1}^n\sum_{j=1}^n a_{ij} (1-\Re [\langle x_i, x_j \rangle])\\
&= \alpha_{GW}^{\mathbb{H}} \sum_{i=1}^n\sum_{j=1}^n a_{ij} (1-\langle x_i, x_j \rangle)\\
&=\alpha_{GW}^{\mathbb{H}} \sum_{i=1}^n\sum_{j=1}^n l_{ij} \langle x_i, x_j \rangle.
\end{align*}
Hence
\[
\max _{\|x_i\|=1}\biggl| \sum_{i=1}^n \sum_{j=1}^n  l_{ij}\langle x_i, x_j \rangle \biggr| \leq \frac{1}{\alpha_{GW}^{\mathbb{H}}}.
\]
\end{proof}

\subsection{Cones of diagonally dominant matrices}
As in last section, all matrices considered in this section will be real. Let $\mathbb{S}^n_{dd}:= \{A\in \mathbb{S}^n(\mathbb{R}): a_{ii} \geq \sum_{i\neq j}|a_{ij}| \}$ be the cone of symmetric diagonally dominant matrices. Let $\mathbb{S}^n_{dd}(\mathbb{R}_+)$ be the cubcone of diagonally dominant matrices with nonnegative entries.
\begin{lemma}
Every $A \in \mathbb{S}^n_{dd}$ has a unique decomposition $A = P+L$ such that $P\in \mathbb{S}^n_{dd}(\mathbb{R}_+), L \in \mathbb{L}^n$ and $l_{ij}p_{ij} = 0$ whenever $i\neq j$.
\end{lemma}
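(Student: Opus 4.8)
The plan is to exhibit the decomposition explicitly by splitting each off-diagonal entry of $A$ into its positive and negative parts, and then fixing the two diagonals by the constraints ``$L\in\mathbb{L}^n$'' and ``$A=P+L$''. Concretely, for $i\neq j$ I would set $p_{ij}:=\max(a_{ij},0)\geq 0$ and $l_{ij}:=\min(a_{ij},0)\leq 0$, so that $a_{ij}=p_{ij}+l_{ij}$ and $p_{ij}l_{ij}=0$ (one factor always vanishes); symmetry of $A$ makes these arrays symmetric. Then I would force $L$ to have zero row sums by putting $l_{ii}:=-\sum_{j\neq i}l_{ij}=\sum_{j\neq i}\max(-a_{ij},0)$, which exhibits $L=L_B$ with $b_{ii}=0$ and $b_{ij}=-l_{ij}\geq 0$, hence $L\in\mathbb{L}^n$; and finally set $p_{ii}:=a_{ii}-l_{ii}$ so that $A=P+L$ by construction.

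Next I would verify that $P\in\mathbb{S}^n_{dd}(\mathbb{R}_+)$, which is the only place the hypothesis on $A$ is used and is the main (albeit short) obstacle. The matrix $P$ is symmetric with nonnegative off-diagonal entries by construction, so the point is its diagonal dominance: the inequality $p_{ii}\geq\sum_{j\neq i}p_{ij}$ reads $a_{ii}-\sum_{j\neq i}\max(-a_{ij},0)\geq\sum_{j\neq i}\max(a_{ij},0)$, i.e.
\[
a_{ii}\ \geq\ \sum_{j\neq i}\bigl(\max(a_{ij},0)+\max(-a_{ij},0)\bigr)\ =\ \sum_{j\neq i}\lvert a_{ij}\rvert,
\]
which is precisely the condition $A\in\mathbb{S}^n_{dd}$. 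In particular $p_{ii}\geq\sum_{j\neq i}p_{ij}\geq 0$, so every entry of $P$ is nonnegative. The remaining requirements (that $A=P+L$, that $L\in\mathbb{L}^n$, and that $l_{ij}p_{ij}=0$ for $i\neq j$) are immediate from the construction.

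For uniqueness, suppose $A=P+L=P'+L'$ with all the stated properties. Fixing $i\neq j$, from $a_{ij}=p_{ij}+l_{ij}$ with $p_{ij}\geq 0$, $l_{ij}\leq 0$, $p_{ij}l_{ij}=0$ I would argue that necessarily $p_{ij}=\max(a_{ij},0)$ and $l_{ij}=\min(a_{ij},0)$: if $a_{ij}>0$ then $p_{ij}>0$ forces $l_{ij}=0$; if $a_{ij}<0$ then $l_{ij}<0$ forces $p_{ij}=0$; and if $a_{ij}=0$ then $p_{ij}=-l_{ij}$ with $-p_{ij}^2=p_{ij}l_{ij}=0$. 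Hence the off-diagonal parts of $P$ and $L$ are uniquely determined, so they agree with those of $P',L'$; then $L\mathbbm{1}=0$ forces $l_{ii}=-\sum_{j\neq i}l_{ij}$, and $p_{ii}=a_{ii}-l_{ii}$ is then forced as well, giving $P=P'$ and $L=L'$.
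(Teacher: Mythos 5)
Your proposal is correct and follows essentially the same route as the paper: both construct $L$ from the negative parts of the off-diagonal entries (the paper via $B$ with $b_{ij}=-a_{ij}$ when $a_{ij}<0$ and $L=L_B$, you via $l_{ij}=\min(a_{ij},0)$ with $l_{ii}=-\sum_{j\neq i}l_{ij}$, which is the same matrix), set $P=A-L$, and reduce diagonal dominance of $P$ to that of $A$. Your uniqueness argument is spelled out in more detail than the paper's one-line remark, but it is the same idea.
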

\begin{proof}
Let $B$ be defined by $b_{ij} = -a_{ij}$ if $a_{ij}<0$ and $i\neq j$, $b_{ij} = 0$ otherwise. Then $B$ has zero diagonal and positive off-diagonal elements and $L_B \in \mathbb{L}^n$. Let $P:= A-L_B$. Then $p_{ij} \geq 0$ and $l_{ij}p_{ij} = 0$ whenever $i\neq j$. Since $a_{ii} \geq \sum_{i\neq j}|a_{ij}| = \sum_{i\neq j} (l_{ij}+p_{ij})$, we have $p_{ii} = a_{ii} - \sum_{i\neq j} l_{ij} \geq \sum_{i\neq j} p_{ij}$, so $P\in \mathbb{S}^n_{dd}$. Uniqueness follows by the definition.
\end{proof}

Clearly,
\[
\mathbb{L}^n \subseteq \mathbb{S}^n_{dd} \subseteq \mathbb{S}^n_{+}(\mathbb{R}).
\]
By definition, we expect the constant of Geothendieck inequality for $\mathbb{S}^n_{dd}$ lies between those of $\mathbb{L}^n$ and $\mathbb{S}^n_{+}(\mathbb{R})$. The following theorem gave a bound for the constant.

\begin{theorem}
Let $a_0 = \frac{9\pi}{32}$ be the constant given in Theorem~\ref{symgro} and $\alpha_{GW}^{\mathbb{H}}$ be the quaternion Goemans-Williamson constant. Then for any $A \in \mathbb{S}^n_{dd}$,
\begin{equation}
\max _{\|x_i\|=1}\biggl| \sum_{i=1}^n \sum_{j=1}^n  a_{ij}\langle x_i, x_j \rangle \biggr| \leq \bigg(1+\frac{1-a_0}{\alpha_{GW}^{\mathbb{H}}} \bigg) \max _{|\delta_i|=1}\biggl| \sum_{i=1}^n \sum_{j=1}^n a_{ij}\langle \delta_i, \delta_j \rangle \biggr|.
\end{equation}
where $x_1,\dots ,x_n\in \mathbb{H}^l$, $\delta_1,\dots ,\delta_n\in \mathbb{H}$. The value of the constant is approximate $1.1204$. By Lemma~\ref{psdlemma}, we can change the expression in both sides of the inequality. 
\end{theorem}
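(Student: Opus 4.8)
The plan is to use the decomposition $A=P+L$ with $P\in\mathbb{S}^n_{dd}(\mathbb{R}_+)$ and $L\in\mathbb{L}^n$ from the preceding lemma, and to combine the Nesterov-type bound for $P$ (governed by the first Taylor coefficient $a_0=9\pi/32$ of $P$ together with the quaternion Schur product theorem) with the quaternion Goemans--Williamson bound for $L$ --- but \emph{not} by a triangle inequality on the $\theta$-side, which would be far too lossy. Instead I would bound the two pieces jointly, under a single global constraint coming from the fact that the $P$-part and the $L$-part of $\sum a_{ij}P(\langle x_i,x_j\rangle)$ add up to something $\le\|A\|_\theta$.

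First normalize so that $\max_{|\delta_i|=1}\bigl|\sum_{i,j}a_{ij}\langle\delta_i,\delta_j\rangle\bigr|=1$; since $A\in\mathbb{S}^n_{dd}\subseteq\mathbb{S}^n_+(\mathbb{R})$ the form $\delta^{*}A\delta$ is a nonnegative real number, so this is just $\|A\|_\theta=1$, and by Lemma~\ref{psdlemma} it suffices to bound $\sum_{i,j}a_{ij}\langle x_i,x_j\rangle$ over unit vectors $x_1,\dots,x_n\in\mathbb{H}^l$. Fix such vectors, write $G=(\langle x_i,x_j\rangle)_{i,j}$ and $P(G)=(P(\langle x_i,x_j\rangle))_{i,j}$, and set $S_P=\sum p_{ij}\langle x_i,x_j\rangle$, $S_L=\sum l_{ij}\langle x_i,x_j\rangle$, $T_P=\sum p_{ij}P(\langle x_i,x_j\rangle)$, $T_L=\sum l_{ij}P(\langle x_i,x_j\rangle)$. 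All four are real numbers (because $A,P,L$ are real symmetric and $\overline{P(z)}=P(\bar z)$), with $S_P,S_L\ge 0$ by the quaternion Schur product theorem applied to $P\circ G$ and $L\circ G$, and $S_P+S_L=\sum a_{ij}\langle x_i,x_j\rangle$ is exactly the quantity to be bounded.

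Next I would establish four inequalities. \emph{(a)} $T_P+T_L\le 1$: by Theorem~\ref{uvfuvfor}, $P(\langle x_i,x_j\rangle)=\int_{\mathbb{H}^l}\sign\langle x_i,z\rangle\sign\langle z,x_j\rangle\,G_l^{\mathbb{H}}(z)\,dm(z)$; for fixed $z$, setting $\epsilon_i=\sign\langle z,x_i\rangle$ (so $|\epsilon_i|=1$ and $\sign\langle x_i,z\rangle=\bar\epsilon_i$), the integrand is $\sum_{i,j}a_{ij}\bar\epsilon_i\epsilon_j$, whose real part has absolute value $\le\|A\|_\theta=1$; since $T_P+T_L=\sum a_{ij}P(\langle x_i,x_j\rangle)$ is real and $G_l^{\mathbb{H}}$ is a probability density, integrating gives the claim. \emph{(b)} $S_L\le\frac{1}{\alpha_{GW}^{\mathbb{H}}}T_L$: writing $L=L_B$ with $B\ge 0$, zero diagonal, one has $T_L=\sum_{i\ne j}b_{ij}\bigl(1-\Re P(\langle x_i,x_j\rangle)\bigr)$ and $S_L=\sum_{i\ne j}b_{ij}\bigl(1-\Re\langle x_i,x_j\rangle\bigr)$, and $1-\Re P(h)\ge\alpha_{GW}^{\mathbb{H}}(1-\Re h)$ for $|h|\le 1$ by the characterization of $\alpha_{GW}^{\mathbb{H}}$ proved in the Goemans--Williamson theorem (using $P(h)=h$ when $|h|=1$). \emph{(c)} $T_P\ge a_0 S_P$: since $a_0=9\pi/32$ is the first Taylor coefficient of $P$, Lemma~\ref{Gramexist} shows $P(G)-a_0G=\sum_{k\ge1}b_{2k+1}\bigl(\langle x_i,x_j\rangle|\langle x_i,x_j\rangle|^{2k}\bigr)_{i,j}$ is a nonnegative combination of quaternion correlation matrices, hence self-adjoint positive semidefinite; by the quaternion Schur product theorem $P\circ(P(G)-a_0G)$ is self-adjoint positive semidefinite, and $\mathbbm{1}^{*}\bigl(P\circ(P(G)-a_0G)\bigr)\mathbbm{1}\ge 0$ reads $T_P-a_0S_P\ge0$. \emph{(d)} $S_P\le1$: by Cauchy--Schwarz $\Re\langle x_i,x_j\rangle\le1$ and $p_{ij}\ge0$ for $i\ne j$, so $S_P\le\sum_{i,j}p_{ij}=\mathbbm{1}^{\tp}P\mathbbm{1}=\mathbbm{1}^{\tp}A\mathbbm{1}$ (as $L\mathbbm{1}=0$), and $\mathbbm{1}^{\tp}A\mathbbm{1}=\sum a_{ij}\bar\delta_i\delta_j$ at $\delta_i\equiv1$ is $\le\|A\|_\theta=1$.

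Finally, chaining (b), (a), (c): $S_L\le\frac{1}{\alpha_{GW}^{\mathbb{H}}}T_L\le\frac{1}{\alpha_{GW}^{\mathbb{H}}}(1-T_P)\le\frac{1}{\alpha_{GW}^{\mathbb{H}}}(1-a_0S_P)$, so
\[
\sum_{i,j}a_{ij}\langle x_i,x_j\rangle=S_P+S_L\le\frac{1}{\alpha_{GW}^{\mathbb{H}}}+S_P\Bigl(1-\frac{a_0}{\alpha_{GW}^{\mathbb{H}}}\Bigr).
\]
Because $\alpha_{GW}^{\mathbb{H}}\approx0.9673>a_0\approx0.8836$, the coefficient of $S_P$ is strictly positive, so by (d) the right-hand side is maximal at $S_P=1$, yielding $1+\frac{1-a_0}{\alpha_{GW}^{\mathbb{H}}}\approx1.1204$; taking the maximum over unit vectors completes the argument. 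The step I expect to demand the most care is (a): one must handle the quaternion-valued $\sign$ function and the integral representation of $P$ correctly, and in particular exploit that although each $P(\langle x_i,x_j\rangle)$ is a quaternion, the full sum $\sum a_{ij}P(\langle x_i,x_j\rangle)$ is real, which is what lets the scalar bound $\|A\|_\theta$ pass through the integral; the positivity $\alpha_{GW}^{\mathbb{H}}>a_0$, though numerically obvious, also needs to be flagged explicitly, since the concluding optimization in $S_P$ relies on it.
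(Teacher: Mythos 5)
Your proposal is correct and follows essentially the same route as the paper: the same decomposition $A=P+L$, the same bound $\sum a_{ij}P(\langle x_i,x_j\rangle)\le 1$ via the sign-function integral, the Goemans--Williamson estimate on the $L$-part, the Schur-product/first-Taylor-coefficient estimate $T_P\ge a_0S_P$ on the $P$-part, and the final bound $S_P\le\sum p_{ij}=\sum a_{ij}\le 1$; your concluding algebra is the paper's identity $1\ge\alpha_{GW}^{\mathbb{H}}(S_P+S_L)-(\alpha_{GW}^{\mathbb{H}}-a_0)S_P$ rearranged. Your write-up is somewhat more explicit than the paper's (e.g., flagging that $\alpha_{GW}^{\mathbb{H}}>a_0$ is needed to use $S_P\le1$, and spelling out why the relevant sums are real), but the substance is identical.
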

\begin{proof}
Let $A = P+L$ be the decomposition given by the lemma. Assume that
\[
\max_{|\delta_i|=1}\biggl| \sum_{i=1}^n \sum_{j=1}^n a_{ij}\langle \delta_i, \delta_j \rangle \biggr| = 1.
\]
We have
\begin{align*}
1&\geq \sum_{i=1}^n \sum_{j=1}^n a_{ij}P(\langle x_i, x_j \rangle) \\
&= \sum_{i=1}^n \sum_{j=1}^n l_{ij}P(\langle x_i, x_j \rangle) + \sum_{i=1}^n \sum_{j=1}^n p_{ij}P(\langle x_i, x_j \rangle)\\
&\geq \alpha_{GW}^{\mathbb{H}} \sum_{j=1}^n  l_{ij}\langle x_i, x_j \rangle+ a_0 \sum_{i=1}^n \sum_{j=1}^n  p_{ij}\langle x_i, x_j \rangle\\
&=\alpha_{GW}^{\mathbb{H}} \sum_{j=1}^n  a_{ij}\langle x_i, x_j \rangle - (\alpha_{GW}^{\mathbb{H}}  - a_0)\sum_{j=1}^n  p_{ij}\langle x_i, x_j \rangle.
\end{align*}
Since $P\in \mathbb{S}^n_{dd}(\mathbb{R}_+)$, we have
\[
\sum_{i=1}^n \sum_{j=1}^n p_{ij}\langle x_i, x_j \rangle \leq \sum_{i=1}^n \sum_{j=1}^n p_{ij} = \sum_{i=1}^n \sum_{j=1}^n a_{ij} \leq 1
\]
So
\[
\alpha_{GW}^{\mathbb{H}} \sum_{j=1}^n  a_{ij}\langle x_i, x_j \rangle \leq 1+\alpha_{GW}^{\mathbb{H}}  - a_0
\]
and the inequality follows.
\end{proof}

\newpage 
\begin{appendix}
\section{Additional results on quaternions}
\subsection{Representation of quaternions as real $4\times 4$ matrices}
By representing a complex number $z=x+y\mathrm{i}$ as $2\times 2$ real valued matrix $\left(\begin{array}{cc}x&y\\-y&x\end{array}\right)$ we can represent $C(a)$
as 
\[C(a,\mathbb{R})=\left(\begin{array}{rrrr}x&y&u&v\\-y&x&-v&u\\-u&v&x&-y\\-v&-u&y&x\end{array}\right), \quad a=(z,w), z=x+y\mathrm{i},w=u+v\mathrm{i} .\]
Then $a\to C(a,\mathbb{R})$ is an isomorphism of $\mathbb{H}$ and the induced  $4$-dimensional subalgebra $\mathcal{C}(\mathbb{H},\mathbb{R})=\{C(a,\mathbb{R}), \;a\in\mathbb{H}\}\subset\mathbb{R}^{4\times 4}$.

\subsection{Elementary operations and rank}
One can perform elementary row operations on $M$ to bring $M$ to row echelon (REF) or reduced row echelon form (RREF).  The number of pivots is called the row rank of $M$.  The row rank of $M$ is the dimension of the row space of $M$. 
This elementary row operations can be implemented by the corresponding product of the elementary matrices from the left hand side of $M$.  We can also perform the corresponding elementary column operations, when we multiply the columns be the scalar from the right.   This can be done by the corresponding product of the elementary matrices from the right hand side of $M$.  
It is equivalent to finding to REF or RREF of $A^{\top}$.  (See the arguments below.) The row rank of $A^{\top}$ will give us the dimension of the columns space generated by the columns of $A$, viewed as the right vector space over $\mathbb{H}$.  We claim that the row rank and the column rank are equal.  As $C(a)$ is invertible if $a\ne 0$ and $C(0)=0$, we deduce that
the  elementary row operations on $M$ corresponding to the elementary block row operations on $C(M)$.  Hence the row rank of $M$ is a half of the rank of $C(M)$.  Elementary column operations on $M$ correspond to the elementary block column operations on $C(M)$.  Hence the column rank of $M$ is a half of the rank of $C(M)$. 

\subsection{Additional results in quaternion inner product space}
In what follows we need the following lemma
 \begin{lemma}\label{nprodbariden} Let $x,y\in\mathbb{H}^l$.  Then
 \begin{equation}\label{inprodbariden1}
 \Re(\langle x,y\rangle)=\Re(\langle\overline{y},\overline{x}\rangle).
 \end{equation}
 \end{lemma}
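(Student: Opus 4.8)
The plan is to unwind both sides to scalar sums over the coordinates and then invoke the real-part commutation identity \eqref{tracecomrel}. First I would recall that for $x=(x_1,\dots,x_l)^\tp$ and $y=(y_1,\dots,y_l)^\tp$ in $\mathbb{H}^l$ the standard inner product is $\langle x,y\rangle = x^*y = \sum_{i=1}^l \bar x_i y_i$, and that by the definition of $\overline{(\cdot)}$ on vectors in \eqref{defbarstarvector} we have $\overline{x}=(\bar x_1,\dots,\bar x_l)^\tp$ and $\overline{y}=(\bar y_1,\dots,\bar y_l)^\tp$.

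Next I would compute the right-hand side directly: $\langle \overline{y},\overline{x}\rangle = (\overline{y})^*\,\overline{x} = \sum_{i=1}^l \overline{(\bar y_i)}\,\bar x_i = \sum_{i=1}^l y_i\bar x_i$, using $\overline{\bar y_i}=y_i$. Taking real parts and using additivity of $\Re$, this gives $\Re(\langle\overline{y},\overline{x}\rangle)=\sum_{i=1}^l \Re(y_i\bar x_i)$, while likewise $\Re(\langle x,y\rangle)=\sum_{i=1}^l \Re(\bar x_i y_i)$.

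Finally I would apply \eqref{tracecomrel}, which gives $\Re(ab)=\Re(ba)$ for all $a,b\in\mathbb{H}$; with $a=\bar x_i$, $b=y_i$ this yields $\Re(\bar x_i y_i)=\Re(y_i\bar x_i)$ term by term, and summing over $i$ completes the proof. There is no real obstacle here: the only point requiring any care is keeping track of which conjugate/transpose operations are being applied (in particular $\overline{\bar y_i}=y_i$, so that $(\overline{y})^*$ produces $y_i$ and not $\bar y_i$), after which the statement reduces to a coordinatewise instance of \eqref{tracecomrel}.
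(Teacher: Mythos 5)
Your proof is correct and follows essentially the same route as the paper: write both inner products as coordinate sums, note that $\langle \overline{y},\overline{x}\rangle=\sum_i y_i\bar x_i$, and apply the commutation identity \eqref{tracecomrel} termwise. The only cosmetic difference is that the paper is slightly loose about the convention ($\sum_i x_i\bar y_i$ versus $\sum_i \bar x_i y_i$), which is immaterial after taking real parts since $\Re(q)=\Re(\bar q)$; your version is the more carefully bookkept one.
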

 \begin{proof}  Use \eqref{tracecomrel} and the definitions of $\overline{x},\overline{y}$ to deduce
 \[\Re(\langle x,y\rangle)=\Re(\sum_{i=1}^l x_i \bar y_i)=\Re(\sum_{i=1}^l \bar y_i x_i )=\Re(\langle \overline{y},\overline{x}\rangle).\]
 
 \end{proof}
 
Let $b_1,\dots,b_m$ be an orthonormal basis in a right vector space $\mathbb{V}$.
Then
\begin{equation}\label{orthexpanid}
x=\sum_{i=1}^m b_i\langle b_i,x\rangle,  \,
y=\sum_{i=1}^m b_i\langle b_i,y\rangle\,
\langle x,y\rangle=\sum_{i=1}^n \langle x,b_i\rangle \langle b_i, y\rangle.
\end{equation}
\begin{definition}\label{defbarxB}  Let $\mathbb{V}$ be a right $m$-dimensional vector space over $\mathbb{H}$ with the inner product $\langle \cdot \rangle$.  Let $\mathcal{B}=\{b_1,\dots,b_m\}$ be an orthonormal basis in $\mathbb{V}$.  For a vector $v\in \mathbb{V}$ we define 
\[\overline{v}_{\mathcal{B}}=\sum_{i=1}^m b_i \overline{\langle b_i,v\rangle} =\sum_{i=1}^m b_i\langle v, b_i\rangle .\]
\end{definition}

\section{Increasing properties of three functions}
Recall the definiton of our functions:
\begin{align}\label{expforReIMp1}
\psi_1(x)&=\Re (p^{+}(x))=\frac{3}{2}\int_0^{\frac{\pi}{2}} (1-x^{-2}\sin^2 u)^{\frac{3}{2}}\,du, \\
\psi_2(x)&=\Im (p^{+}(x))=\frac{3}{2}(1-x^{-2})^2\int_0^{\frac{\pi}{2}} \frac{\sin^4 v}{\sqrt{1-(1-x^{-2})\sin^2 v}}\, dv, \label{expforReIMp2}\\
\omega(x) &= \psi_2'(x) \psi_1(x) - \psi_2(x) \psi_1'(x),\\
\label{specvalpsi12} \psi_1(1)&=1, \quad \psi_1(\infty)=\frac{3\pi}{4}, \quad \psi_2(1)=0, \quad \psi_2(\infty)=\infty
\end{align}
It is easy to show from the definitions that $\psi_1(x)$ and $\psi_2(x)$ are increasing functions on $[1,\infty)$.
We are interested in some properties of related functions on the interval $[1,5]$.  For the function $\psi_2(x)$, since it is a function of $y=\sqrt{1-x^{-2}}$ for $x\in[1,\infty)$,
$y\in[0, \sqrt{24}/5]$,
we can differentiate this function as many times as needed on the interval $[1,5]$.  However, for the function $\psi_1(x)$, one can show  that the second derivative does not exists at $1$, i.e., it value at $1$ is $\infty$.

We also observed in \cite{FLZ} that  $\frac{\psi_2(x)}{\psi_1(x)}$ strictly increases on $[1,\infty)$, see Lemma 2.17.  Furthermore,
\begin{align}\label{expanspsi1}
\psi_1(x)=\Re (p^+(x))=\frac{9\pi}{16}\big( \frac{4}{3}-x^{-2}+\sum_{k=2}^{\infty} \frac{(2k-5)!!(2k-1)!!}{2^{2k-2}(k!)^2} x^{-2k}\big),\\
\label{expanspsi2}
\psi_2(x)=\Im (p^+(x))=\frac{3\pi}{16}\sum_{k=0}^{\infty}\frac{(2k-1)!!(2k+3)!!}{ 2^{2k}k!(k+2)! } (1-x^{-2})^{k+2}
\end{align}

\subsection{Basic properties}
 Set 
 \begin{align}\label{phi1form}
 \phi_1= \frac{3}{4}\big(\frac{4}{3}-x^{-2}+\sum_{k=2}^{\infty} \frac{(2k-5)!!(2k-1)!!}{2^{2k-2}(k!)^2} x^{-2k}\big),\\
 \label{phi3form}
 \phi_2=\sum_{k=0}^{\infty}\frac{(2k-1)!!(2k+3)!!}{ 2^{2k}k!(k+2)! } (1-x^{-2})^{k+2}.
 \end{align}
 Thus
 \begin{align*}
 \psi_1=\frac{3\pi}{4}\phi_1, \;\psi_2(x)=\frac{3\pi}{16}\phi_2, \;\omega=\frac{9\pi^2}{64}\tilde\omega, \;\tilde\omega=\phi_2'\phi_1 -\phi_2\phi_1'.
 \end{align*}
 Furthermore
 \begin{align}\label{phi1'form}
 \phi_1'&= \frac{3}{2}\big(x^{-3}-\sum_{k=2}^{\infty} \frac{(2k-5)!!(2k-1)!!}{2^{2k-2}(k!)(k-1)!} x^{-2k-1}\big),\\
 \label{phi1'form}
 \phi_1''&= \frac{3}{2}\big(-3x^{-4}+\sum_{k=2}^{\infty} \frac{(2k-5)!!(2k+1)!!}{2^{2k-2}(k!)(k-1)!} x^{-2k-2}\big),\\
  \label{phi2'form}
 \phi_2'&=2x^{-3}\sum_{k=2}^{\infty}\frac{(2k-1)!!(2k+3)!!}{ 2^{2k}k!(k+1)! } (1-x^{-2})^{k+1},\\
  \label{phi2''form}
 \phi_2''&=2x^{-6}\sum_{k=2}^{\infty}\frac{(2k-1)!!(2k+3)!!}{ 2^{2k}k!(k+1)! } (1-x^{-2})^{k}(2k+5-3x^2).
 \end{align}

We introduce following finite series, where $m$ will be speccified later. All those functions are rational functions.
\begin{align*}
 \phi_{1,m}(x) &= \frac{3}{4}\big(\frac{4}{3}-x^{-2}+\sum_{k=2}^{m} \frac{(2k-5)!!(2k-1)!!}{2^{2k-2}(k!)^2} x^{-2k}\big),\\
 \hat \phi_{1,m}(x) &= \frac{3}{4}\big(\frac{4}{3}-x^{-2}+(\sum_{k=2}^{m-1} \frac{(2k-5)!!(2k-1)!!}{2^{2k-2}(k!)^2} x^{-2k})\\ 
	&+\frac{(2m-5)!!(2m-1)!!}{2^{2m-2}(m!)^2} x^{-2(m-1)}\frac{1}{x^2-1}\big),\\
	\phi_{1,m}'(x) &=\frac{3}{2}\big(x^{-6}-\sum_{k=2}^{m} \frac{(2k-5)!!(2k-1)!!}{2^{2k-2}k!(k-1)!} x^{-2k-1}\big),\\
	\bar\phi_{1,m}(x) &=\frac{3}{2}\big(x^{-6}-(\sum_{k=2}^{m-1} \frac{(2k-5)!!(2k-1)!!}{2^{2k-2}k!(k-1)!} x^{-2k-1})\\
	&-\frac{(2m-5)!!(2m-1)!!}{2^{2m-2}m!(m-1)!} x^{-2m+1}\frac{1}{x^2-1}\big),\\
   \phi_{1,m}''(x) &= \frac{3}{2}\big(-3x^{-4}+\sum_{k=2}^{m} \frac{(2k-5)!!(2k+1)!!}{2^{2k-2}k!(k-1)!} x^{-2k-2}\big),\\
   \tilde\phi_{1,m}(x) &= \frac{3}{2}\big(-3x^{-4}+\sum_{k=2}^{m-1} \frac{(2k-5)!!(2k+1)!!}{2^{2k-2}k!(k-1)!} x^{-2k-2}\\
   &+\frac{(2m-5)!!(2m+1)!!}{2^{2m-2}m!(m-1)!} x^{-2m}\frac{1}{x^2-1}\big).\\
   \phi_{2,m}(x)&=\sum_{k=0}^{m}\frac{(2k-1)!!(2k+3)!!}{ 2^{2k}k!(k+2)! } (1-x^{-2})^{k+2},\\
    \hat\phi_{2,m}(x)&=\sum_{k=0}^{m-1}\frac{(2k-1)!!(2k+3)!!}{ 2^{2k}k!(k+2)! } (1-x^{-2})^{k+2}\\
&+\frac{(2m-1)!!(2m+3)!!}{ 2^{2m}m!(m+2)! } (1-x^{-2})^{m+2} x^2,\\
    \phi_{2,m}'(x)&=2x^{-3}\sum_{k=0}^{m}\frac{(2k-1)!!(2k+3)!!}{ 2^{2k}k!(k+1)! } (1-x^{-2})^{k+1},\\ 
\bar\phi_{2,m}(x) &=2x^{-3}\sum_{k=0}^{m-1}\frac{(2k-1)!!(2k+3)!!}{ 2^{2k}k!(k+1)! } (1-x^{-2})^{k+1}\\
&+\frac{(2m-1)!!(2m+3)!!}{ 2^{2m}m!(m+1)! } (1-x^{-2})^{m+1} x^2,\\
 \phi_{2,m}''(x)&=2x^{-6}\sum_{k=0}^{m}\frac{(2k-1)!!(2k+3)!!}{ 2^{2k}k!(k+1)! } (1-x^{-2})^{k}\big(2k+5-3x^2\big).
 \end{align*}
 We claim that we have the following inequaities:
\begin{proposition}\label{basinvarphi}
For $ x > 1$,
 \begin{align*}\label{inphi12m}
 &0 < \phi_{1,m}(x)<\phi_1(x)< \hat \phi_{1,m}(x),\\
 &\bar\phi_{1,m}(x)<\phi_1'(x) <\phi_{1,m}'(x),0<\phi_{1}'(x),\\
 &\phi_{1,m}''(x)<\phi_{1}''(x)<\tilde\phi_{1,m}(x),\\
 &0< \phi_{2,m}(x)<\phi_2(x)< \hat \phi_{2,m}(x),\\
 &0 <{\phi_{2,m}'(x)} < \phi_2'(x) < \bar\phi_{2,m}(x).\notag
 \end{align*}
When  $m\geq 35, 1\leq x \leq 5$,
\[
\phi_{2,m}''(x) < \phi_2''(x)
\]
\end{proposition}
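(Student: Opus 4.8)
The plan is to reduce every inequality in the proposition to an elementary fact about a power series with explicitly known coefficients, and then to exploit monotonicity of those coefficients. For $x>1$ put $t=x^{-2}\in(0,1)$ and $s=1-x^{-2}\in(0,1)$. In each displayed formula the function $\phi$ (one of $\phi_1,\phi_1',\phi_1'',\phi_2,\phi_2',\phi_2''$) is a positive constant times a power of $x$ times a power series in $t$ (for the $\phi_1$-family) or in $s$ (for the $\phi_2$-family), with all coefficients positive except for the terms $-x^{-2}$ in $\phi_1$, $-3x^{-4}$ in $\phi_1''$, and the $x$-dependent weight $(2k+5-3x^2)$ appearing inside $\phi_2''$. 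The functions $\phi_{1,m},\phi_{1,m}',\dots$ are the partial sums of these series through $k=m$, while $\hat\phi_{1,m},\bar\phi_{1,m},\tilde\phi_{1,m},\hat\phi_{2,m},\bar\phi_{2,m}$ replace the tail $\sum_{k\ge m}$ by a single term built from the geometric identity $\sum_{k\ge m}t^{k}=t^{m}/(1-t)=x^{-2(m-1)}/(x^2-1)$ (and its shifts and $s$-analogue), with the running coefficient frozen at its first tail value.

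The key lemma I would establish first is that each of the relevant coefficient sequences is positive and strictly decreasing in $k$ on its range. Writing $a_k=\frac{(2k-5)!!(2k-1)!!}{2^{2k-2}(k!)^2}$ for $\phi_1$ ($k\ge2$), $b_k=ka_k$ for $\phi_1'$, $c_k=(2k+1)b_k$ for $\phi_1''$, and $d_k=\frac{(2k-1)!!(2k+3)!!}{2^{2k}k!(k+2)!}$ for $\phi_2$ ($k\ge0$), $e_k=(k+2)d_k$ for $\phi_2'$ and $\phi_2''$, one computes the consecutive ratios and checks that each is a rational function of $k$ manifestly below $1$: for instance $a_{k+1}/a_k=\frac{(2k-3)(2k+1)}{4(k+1)^2}$, $c_{k+1}/c_k=\frac{4k^2-9}{4k(k+1)}$, $d_{k+1}/d_k=\frac{(2k+1)(2k+5)}{4(k+1)(k+3)}$ and $e_{k+1}/e_k=\frac{4k^2+12k+5}{4k^2+12k+8}$. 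This step is routine.

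With the lemma in hand the two-sided bounds for $\phi_1,\phi_1',\phi_1'',\phi_2,\phi_2'$ are immediate. For a partial sum, $\phi-\phi_{\cdot,m}$ equals a positive prefactor times the dropped tail $\sum_{k>m}(\cdots)$, whose sign is the constant sign of the dropped coefficients; this gives $\phi_{1,m}<\phi_1$, $\phi_1'<\phi_{1,m}'$, $\phi_{1,m}''<\phi_1''$, $\phi_{2,m}<\phi_2$ and $\phi_{2,m}'<\phi_2'$. Positivity follows from the leading terms: $\phi_{1,m}>\frac{3}{4}\big(\frac{4}{3}-1\big)=\frac{1}{4}>0$, while $\phi_{2,m},\phi_{2,m}'>0$ because $s>0$ and all terms are nonnegative, and $\phi_1'>0$ follows from the integral representation $\psi_1=\frac{3\pi}{4}\phi_1=\frac{3}{2}\int_0^{\pi/2}(1-x^{-2}\sin^2u)^{3/2}\,du$, whose integrand is strictly increasing in $x\ge1$. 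For the tail-frozen versions I would verify that the appended last term equals exactly the prefactor times $C_m\sum_{k\ge m}t^{k}$ (resp. the $s$-version), so that its difference from the true tail is $\sum_{k\ge m}(C_m-C_k)(\cdot)^{k}\ge0$ by the lemma; this yields $\phi_1<\hat\phi_{1,m}$, $\bar\phi_{1,m}<\phi_1'$, $\phi_1''<\tilde\phi_{1,m}$, $\phi_2<\hat\phi_{2,m}$ and $\phi_2'<\bar\phi_{2,m}$. The only delicate point is the bookkeeping of the powers of $x$ and the exponent shifts when matching each ``hat/bar/tilde'' term to its geometric overestimate.

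The last claim, $\phi_{2,m}''<\phi_2''$ for $m\ge35$ and $1\le x\le5$, is the one case where the series $\phi_2''=2x^{-6}\sum_{k\ge0}e_ks^{k}(2k+5-3x^2)$ has terms whose sign depends on both $k$ and $x$, so a ``drop the positive terms'' argument does not apply globally. The point is that for $k\ge m+1\ge36$ and $x\le5$ one has $2k+5\ge77>75\ge3x^2$, so every term of $\phi_2''-\phi_{2,m}''=2x^{-6}\sum_{k\ge m+1}e_ks^{k}(2k+5-3x^2)$ is nonnegative; hence the difference is $\ge0$, and strictly $>0$ once $x>1$ (at $x=1$ both sides equal $4e_0$, so strictness fails only at that endpoint). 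The threshold $m\ge35$ is exactly the condition $2(m+1)+5>3\cdot5^2$. The main obstacle in the proposition is therefore organizational rather than conceptual: carrying out the handful of ratio estimates in the lemma and, above all, correctly aligning each remainder term with the corresponding geometric tail bound without sign or prefactor slips.
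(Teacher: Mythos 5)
Your proposal is correct and follows essentially the same route as the paper: the coefficient sequences are shown to be positive and strictly decreasing via the consecutive-ratio computations (the paper carries out only the case $a_{k+1}/a_k=\frac{(2k-3)(2k+1)}{4(k+1)^2}<1$ and declares the rest ``similar''), and each hat/bar/tilde bound is obtained by freezing the tail coefficient at its value at $k=m$ and summing the resulting geometric series $\sum_{k\ge m}x^{-2k}=x^{-2(m-1)}/(x^2-1)$. You are in fact more explicit than the paper on the final claim, correctly identifying that $m\ge 35$ is exactly what forces $2k+5-3x^2>0$ for all tail indices $k\ge m+1$ when $x\le 5$ (and rightly noting that strictness degenerates at the endpoint $x=1$), whereas the paper dismisses this inequality as ``clear.''
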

\begin{proof}
All the above inequalities are clear except the five inequalities 
 \begin{align*}
 &\phi_1(x)< \hat \phi_{1,m}(x),\;\bar\phi_{1,m}(x)<\phi_1'(x), \;\phi_1''(x)<\tilde \phi_{1,m}(x),\\
 &\phi_2(x)< \hat \phi_{2,m}(x), \phi_2'(x) < \bar\phi_{2,m}(x).  
 \end{align*}
 To show  the inequality $\phi_1(x)< \hat \phi_{1,m}(x)$, we argue as follows.  First observe that coefficients of $\phi_1(x)$ is strictly decreasing for $k\ge 2$.  This follows from the observation that the ratio between two terms 
\[
\frac{(2k-3)(2k+1)}{4(k+1)^2}<1
\]
for $k\ge 2$.  Now in the infinite series of $\phi_1(x)$ we replace the coefficient $ \frac{(2k-5)!!(2k-1)!!}{2^{2k-2}(k!)^2}$ by $ \frac{(2k-m)!!(2m-1)!!}{2^{2m-2}(k!)^2}$ for $k\ge m$.  This will increase the value of $\phi_1(x)$.  The infinite sum for $k\ge m$ can be summed to 
\[\frac{(2m-5)!!(2m-1)!!}{2^{2m-2}(m!)^2} x^{-2m} \frac{1}{1-x^{-2}}=
 \frac{(2m-5)!!(2m-1)!!}{2^{2m-2}(m!)^2} x^{-2(m-1)} \frac{1}{x^2-1}.
\]
The other inequalities can be shown similarly.
\end{proof}

To prove certain poperties of those function, we use Mathematica. Essentially, we only use \texttt{CountRoots} to calculate the number of roots of a rational function in a given interval. If the number is 0, we can conclude that the function stays positive or negative in this interval. \texttt{CountRoots} applies an exact algorithm so that we can prove our results rigorously. 

To bound infinite series by finite series, we need this trivial lemma:
\begin{lemma}\label{triviallemma}
For two real numbers $a,b$, if $0\le a_1 \le a \le a_2$, $b_1\le b\le b_2$, then $\min\{a_1 b_1, a_2 b_1\} \leq ab \leq \max\{a_1b_2, a_2b_2\}$. If furthermore $b\ge 0$, then $a_1 b_1 \leq ab \leq a_2 b_2$.
\end{lemma}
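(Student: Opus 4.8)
The plan is a short case split on the sign of $b$, using only that multiplication by a nonnegative number preserves inequalities and multiplication by a negative number reverses them. First I would establish the lower bound $\min\{a_1b_1,a_2b_1\}\le ab$. If $b\ge 0$, then from $a\ge a_1\ge 0$ we get $ab\ge a_1 b$, and from $a_1\ge 0$ together with $b\ge b_1$ we get $a_1b\ge a_1b_1$, so $ab\ge a_1b_1\ge\min\{a_1b_1,a_2b_1\}$. If $b<0$, then $a\le a_2$ gives $ab\ge a_2 b$, and $a_2\ge 0$ together with $b\ge b_1$ gives $a_2b\ge a_2b_1$, so $ab\ge a_2b_1\ge\min\{a_1b_1,a_2b_1\}$.

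Next I would establish the upper bound $ab\le\max\{a_1b_2,a_2b_2\}$ in the symmetric way. If $b\ge 0$, then $a\le a_2$ gives $ab\le a_2 b$, and $a_2\ge 0$ with $b\le b_2$ gives $a_2 b\le a_2b_2$, so $ab\le a_2b_2$. If $b<0$, then $a\ge a_1$ gives $ab\le a_1 b$, and $a_1\ge 0$ with $b\le b_2$ gives $a_1 b\le a_1 b_2$, so $ab\le a_1b_2$. In either case $ab\le\max\{a_1b_2,a_2b_2\}$. Finally, the ``furthermore'' clause is just the $b\ge 0$ branch of both computations above, which directly yields $a_1b_1\le ab\le a_2b_2$.

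I do not expect any genuine obstacle; the statement is elementary. The only point worth flagging is that the hypothesis that the lower endpoint $a_1$ is nonnegative is actually used: it is precisely what makes $a_2b\ge a_2b_1$ and $a_1b\le a_1b_2$ valid when $b<0$, and the displayed bounds can fail without it. So the write-up should make sure each step names which of $a_1\ge 0$, $a\ge a_1$, $a\le a_2$, $b\ge b_1$, $b\le b_2$ is being invoked.
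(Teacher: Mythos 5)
Your proof is correct and follows essentially the same elementary route as the paper, which also reduces everything to comparisons of the corner products $a_ib_j$ using the hypothesis $a_1\ge 0$. If anything, your version is slightly more explicit: the paper first asserts (without detail) that the extrema of $ab$ over the box are attained at corners and then discards two corners using $a_1\ge 0$, whereas you derive the bounds directly by a case split on the sign of $b$, correctly flagging that $a_1\ge 0$ is exactly what is needed in the $b<0$ branch.
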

\begin{proof} Consider $\min ab$,  where $a,b$ are variables that satisfy $a_1\le a\le a_2$ and $b_1\le b\le b_2$.  Then this minimum is $\min\{a_ib_j, i,j\in[2]\}$.  We now use the assumption that $0\le a_1$.  Hence $a_1b_1\le a_1b_2, a_2b_1\le a_2b_2$.
Hence $ab\ge \min\{a_1b_1,a_2b_1\}$.  Similarly
\begin{eqnarray*}
 \max ab=\max\{a_ib_j, i,j\in[2]\}=\max\{a_1b_2, a_2b_2\}.
\end{eqnarray*}
Assume now that $b_2\ge b\ge 0$. Then 
\begin{eqnarray*}
\min\{ab, a\in[a_1,a_2]\}=\min\{a_1b,a_2b\}=a_1b, \quad \max\{ab, a\in[a_1,a_2]\}=\max\{a_1b,a_2b\}=a_2b\le a_2b_2.
\end{eqnarray*}
\end{proof}

\subsection{The function $\omega(x)$}
In this section, $m = 50$. 
\begin{proposition}\label{omegaincrlem}  Assume that $\tau\in(1,2)$ is the smallest value of $x \geq 1$ such that $\tilde\omega'(x)=0$.  Then $\tau > 1.732$.
\end{proposition}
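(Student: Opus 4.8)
The plan is to prove the stronger statement that $\tilde\omega'(x)>0$ for every $x\in(1,1.732]$. Since $\tilde\omega'$ is real-analytic on $(1,\infty)$ and extends continuously to $x=1$ with $\tilde\omega'(1)=\phi_2''(1)\phi_1(1)>0$ — near $x=1$ one has $\phi_2(x)=O((x-1)^2)$, which overwhelms the (only logarithmic) blow-up of $\phi_1''$ at $1$, so $\phi_2\phi_1''\to0$ there — this shows $\tilde\omega'$ has no zero in $[1,1.732]$, whence the smallest zero $\tau\in(1,2)$ of $\tilde\omega'$ satisfies $\tau>1.732$. The first step is to differentiate $\tilde\omega=\phi_2'\phi_1-\phi_2\phi_1'$ to obtain $\tilde\omega'=\phi_2''\phi_1-\phi_2\phi_1''$, which expresses $\tilde\omega'$ through $\phi_1,\phi_1'',\phi_2,\phi_2''$; for these, Proposition~\ref{basinvarphi} supplies the explicit rational sandwiches (with $m=50$, using $m\ge35$ and $[1,1.732]\subseteq[1,5]$) $0<\phi_{1,m}<\phi_1<\hat\phi_{1,m}$, $\phi_{1,m}''<\phi_1''<\tilde\phi_{1,m}$, $0<\phi_{2,m}<\phi_2<\hat\phi_{2,m}$, and $\phi_{2,m}''<\phi_2''$.

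On $[1,1.732]$ both $\phi_1''$ and $\phi_2''$ change sign — each is large and positive near $x=1$ and turns negative before $x=1.732$ — so no single product estimate suffices; instead I would build a \emph{piecewise} rational lower bound $R(x)\le\tilde\omega'(x)$, the pieces being determined by the signs of the \emph{explicit} rationals $\phi_{2,m}''$ and $\tilde\phi_{1,m}$. For the first summand: $\phi_2''\phi_1\ge\phi_{2,m}''\phi_{1,m}$ where $\phi_{2,m}''\ge0$, and $\phi_2''\phi_1\ge\phi_{2,m}''\hat\phi_{1,m}$ where $\phi_{2,m}''<0$ (in the latter case, checking the subcases $\phi_2''\ge0$ and $\phi_2''<0$ and using $\phi_{2,m}''<\phi_2''$ together with $0<\phi_1<\hat\phi_{1,m}$). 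For the second summand: $\phi_2\phi_1''\le\hat\phi_{2,m}\tilde\phi_{1,m}$ where $\tilde\phi_{1,m}\ge0$, and $\phi_2\phi_1''\le\phi_{2,m}\tilde\phi_{1,m}$ where $\tilde\phi_{1,m}<0$ (then $\phi_1''<\tilde\phi_{1,m}<0$ and $0<\phi_{2,m}<\phi_2$). Subtracting, $R=(\text{the chosen lower bound of }\phi_2''\phi_1)-(\text{the chosen upper bound of }\phi_2\phi_1'')$ is, on each of the at most four sign-regimes, a genuine rational function with $\tilde\omega'\ge R$; note that only a \emph{lower} bound for $\phi_2''$ is ever used, so the absence of an upper bound for $\phi_2''$ in Proposition~\ref{basinvarphi} causes no trouble.

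Next I would use Mathematica's \texttt{CountRoots} to locate the finitely many sign changes of $\phi_{2,m}''$ and of $\tilde\phi_{1,m}$ in $(1,1.732)$, pick rational cut points $1=a_0<a_1<\dots<a_N=1.732$ on which $\phi_{2,m}''$ and $\tilde\phi_{1,m}$ each keep a constant sign, and on each $[a_{i-1},a_i]$ run \texttt{CountRoots} on the single rational function $R$: a count of $0$ together with one evaluation certifies $R>0$ on that subinterval, hence $\tilde\omega'>0$ there. The factor $1/(x^2-1)$ that appears in $\hat\phi_{1,m}$ and $\tilde\phi_{1,m}$ is, inside $R$, multiplied by factors of $\hat\phi_{2,m}$ or $\phi_{2,m}$ that vanish to order $\ge2$ at $x=1$, so the singularity of $R$ at $1$ is removable; after clearing denominators $R$ is a polynomial over a power of $x$, with numerator nonvanishing at $x=1$ (its value there being $12\,\phi_{1,m}(1)$ up to the power of $x$, since $\phi_{2,m}''(1)=12$), so \texttt{CountRoots} applies verbatim, including at $x=1$.

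The step I expect to be the main obstacle is the sign bookkeeping together with the quantitative sharpness near $x=1.732$: one must verify that each of the four product inequalities points the right way, that the regime is genuinely constant on each chosen subinterval, and — the one point the computation must actually vindicate — that $m=50$ is large enough for $R$ to remain positive on the subinterval abutting $1.732$, where $\tilde\omega'$ takes its smallest value on $[1,1.732]$. The truncation errors in all the bounds are of size $O((1-x^{-2})^{m})=O((2/3)^{50})$ on this interval, far below the margin by which $\tilde\omega'$ stays positive near $x=1.732$, so this should be comfortable — but it is exactly what \texttt{CountRoots} has to confirm.
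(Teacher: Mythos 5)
Your proposal is correct and follows essentially the same route as the paper: write $\tilde\omega'=\phi_2''\phi_1-\phi_2\phi_1''$, bound each summand using the rational sandwiches of Proposition~\ref{basinvarphi} with $m=50$, and certify positivity on $[1,1.732]$ with \texttt{CountRoots}. The only difference is organizational — the paper bounds $\tilde\omega'$ below by the minimum of the four sign-combination functions and checks that all four are positive on the whole interval, whereas you resolve the $\min/\max$ pointwise by partitioning according to the signs of $\phi_{2,m}''$ and $\tilde\phi_{1,m}$; both implementations verify the same inequalities.
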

 \begin{proof}
\[
\tilde\omega'(x) = \phi_2''(x)\phi_1(x)-\phi_2(x)\phi_1''(x).
\]
We now apply Lemma \ref{triviallemma} as follows.  First set
\begin{eqnarray*}
a_1=\phi_{1,m}(x), a=\phi_1(x), a_2=\hat \phi_{1,m}(x), b_1=\phi_{2,m}''(x)<\phi_2''(x), \quad x\in [1,5], m=50,
\end{eqnarray*}
to deduce
\begin{eqnarray*}
\phi_2''(x)\phi_1(x) \geq \min\{ \phi_{2,m}''(x)\phi_{1,m}(x), \phi_{2,m}''(x) \hat\phi_{1,m}(x) \}.
\end{eqnarray*}
Second set
\begin{eqnarray*}
a_1=\phi_{2,m}(x), a=\phi_2(x),a_2=\hat\phi_{2,m}(x), b_2=\tilde\phi_{1,m}(x), \quad x\in[1,5], m=50,
\end{eqnarray*}
to deduce
\begin{eqnarray*}
\phi_2(x)\phi_1''(x) \leq \max\{ \phi_{2,m}(x)\tilde\phi_{1,m}(x), \hat\phi_{2,m}(x)\tilde\phi_{1,m}(x)  \},
\end{eqnarray*}
which is equivalent to
\begin{eqnarray*}
-\phi_2(x)\phi_1''(x) \geq \min\{ -\phi_{2,m}(x)\tilde\phi_{1,m}(x),- \hat\phi_{2,m}(x)\tilde\phi_{1,m}(x)  \}.
\end{eqnarray*}

So $\tilde\omega'(x)$ is larger than the minimum of $2\times 2 = 4$ functions. Using \texttt{CountRoots}, we found that those 4 functions are all positive in $[1, 1.732]$, thus $\tau > 1.732$.
 \end{proof}

\subsection{The function $\omega(x)p_+^l$}
In this section, $m = 50$. Let
\begin{eqnarray}\label{defp+}
p_+(x)=\sqrt{\psi_1^2(x)+\psi_2^2(x)}=\frac{3\pi}{16}\sqrt{\tilde p}, \quad \tilde p(x)=16\phi_1^2+\phi_2^2.
\end{eqnarray}

\begin{proposition}\label{condompmincreas}
The function $\omega(x)p_+^l$ increases in the interval $[1,5]$ for $l=7$.
\end{proposition}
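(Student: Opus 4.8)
The plan is to convert the monotonicity of $\omega p_+^{7}$ on $[1,5]$ into the positivity there of one explicit rational combination of $\phi_1,\phi_2$ and their first two derivatives, and then to certify that positivity by the finite-truncation plus \texttt{CountRoots} scheme already used for Proposition~\ref{omegaincrlem}.

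First I would normalize. Using $\psi_1=\frac{3\pi}{4}\phi_1$, $\psi_2=\frac{3\pi}{16}\phi_2$ we get $\omega=\frac{9\pi^2}{64}\tilde\omega$ with $\tilde\omega=\phi_2'\phi_1-\phi_2\phi_1'$, and by \eqref{defp+}, $p_+=\frac{3\pi}{16}\sqrt{\tilde p}$ with $\tilde p=16\phi_1^2+\phi_2^2$, so $\omega p_+^{7}=c\,\tilde\omega\,\tilde p^{7/2}$ for a positive constant $c$. Differentiating,
\[
\frac{d}{dx}\bigl(c\,\tilde\omega\,\tilde p^{7/2}\bigr)=\frac{c}{2}\,\tilde p^{5/2}\,G(x),\qquad G(x):=2\tilde\omega'(x)\tilde p(x)+7\,\tilde\omega(x)\tilde p'(x),
\]
where $\tilde\omega'=\phi_2''\phi_1-\phi_2\phi_1''$ and $\tilde p'=32\phi_1\phi_1'+2\phi_2\phi_2'$. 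Since $\tilde p>0$ on $[1,\infty)$, it suffices to prove $G>0$ on $(1,5]$: then $\omega p_+^{7}$ has positive derivative on $(1,5)$, hence is strictly increasing on $(1,5]$, and since it is continuous on $[1,5]$ with value $0$ at $x=1$ (because $\omega(1)=0$) and positive on $(1,5]$ (because $\tilde\omega>0$ there by Lemma~\ref{argincreas} and $\tilde p>0$), it is in fact strictly increasing on the whole interval $[1,5]$.

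To bound $G$ from below I would expand
\[
G=2\phi_2''\phi_1\bigl(16\phi_1^2+\phi_2^2\bigr)-2\phi_2\phi_1''\bigl(16\phi_1^2+\phi_2^2\bigr)+7\bigl(\phi_2'\phi_1-\phi_2\phi_1'\bigr)\bigl(32\phi_1\phi_1'+2\phi_2\phi_2'\bigr),
\]
record the signs valid on $(1,5]$ (namely $\phi_1,\phi_1',\phi_2,\phi_2'>0$, $\phi_2'\phi_1-\phi_2\phi_1'>0$, $32\phi_1\phi_1'+2\phi_2\phi_2'>0$, while $\phi_1''$ and $\phi_2''$ may change sign), fix $m=50$, and replace every factor by the rational envelopes of Proposition~\ref{basinvarphi} ($\phi_{1,m}<\phi_1<\hat\phi_{1,m}$, $\bar\phi_{1,m}<\phi_1'<\phi_{1,m}'$, $\phi_{1,m}''<\phi_1''<\tilde\phi_{1,m}$, $\phi_{2,m}<\phi_2<\hat\phi_{2,m}$, $\phi_{2,m}'<\phi_2'<\bar\phi_{2,m}$, and, since $m=50\ge 35$ and $x\in[1,5]$, $\phi_{2,m}''<\phi_2''$), applying Lemma~\ref{triviallemma} to select the correct one-sided bound in each monomial. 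The only term needing extra care is $2\phi_2''\phi_1(16\phi_1^2+\phi_2^2)$, for which no rational upper bound on $\phi_2''$ is available; but its cofactor is positive, so $2\phi_2''\phi_1(16\phi_1^2+\phi_2^2)>2\phi_{2,m}''\phi_1(16\phi_1^2+\phi_2^2)$, and the last quantity is at least the minimum of the two rational functions obtained by substituting the all-lower and the all-upper envelopes for $\phi_1,\phi_2$ --- a bound valid pointwise whatever the sign of the known rational function $\phi_{2,m}''$. Collecting the finitely many cases over all monomials yields rational functions $R_1,\dots,R_N$ with $G\ge\min_i R_i$ on $[1,5]$, and I would conclude by running \texttt{CountRoots} on each $R_i$ over $[1,5]$: since that routine is exact, a root count of $0$ together with a single sign evaluation (say at $x=2$) proves $R_i>0$ throughout $[1,5]$, hence $G>0$ and the proposition follows.

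The hard part is quantitative, not structural. For $x$ beyond $\tau\approx1.732$ the factor $\tilde\omega'$ is no longer positive (Proposition~\ref{decxpn}, $\omega'(\tau)=0$), so near $x=5$ the first term of $G$ may be negative and must be dominated by $7\tilde\omega\tilde p'$; but there the truncation errors $(1-x^{-2})^k$ decay only geometrically with ratio close to $1$, so one needs the truncation index $m$ large enough (this is exactly why $m=50$ is fixed for this subsection) that the lower envelope of $\tilde\omega$, and hence of $7\tilde\omega\tilde p'$, stays strictly positive on the portion of $[1,5]$ where it is needed, while the lower envelope of $2\tilde\omega'\tilde p$ carries the estimate near $x=1$. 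Verifying that $m=50$ suffices is precisely what the \texttt{CountRoots} computation does; the remaining work is routine, if tedious, sign bookkeeping over the monomials of $G$.
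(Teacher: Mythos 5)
Your reduction is exactly the paper's: writing $\omega p_+^{7}=c\,\tilde\omega\,\tilde p^{7/2}$, differentiating, stripping off the positive factor $\tilde p^{5/2}$, and reducing to the positivity of $\tfrac12 G=\tfrac{7}{2}\tilde\omega\tilde p'+\tilde\omega'\tilde p$, to be certified by the truncated envelopes of Proposition~\ref{basinvarphi} together with \texttt{CountRoots}. The one place where your plan as written would not go through is the endpoint $x=1$. You propose to certify $G>0$ on all of $(1,5]$, asserting that ``the lower envelope of $\tilde\omega$, and hence of $7\tilde\omega\tilde p'$, stays strictly positive'' where needed and that the $2\tilde\omega'\tilde p$ term carries the estimate near $x=1$. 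But near $x=1$ the rational envelopes degenerate: the closed-form geometric tails in $\hat\phi_{1,m},\bar\phi_{1,m},\tilde\phi_{1,m}$ carry a pole $1/(x^2-1)$, so $\bar\phi_{1,m}\to-\infty$ as $x\to1^{+}$ and the lower envelope of $\tilde p'=32\phi_1\phi_1'+2\phi_2\phi_2'$ is \emph{negative} in a neighborhood of $1$ (while $\tilde\omega(1)=0$, so the lower envelope of $\tilde\omega$ is not strictly positive there either). Moreover $\psi_1''(1)=\infty$, so no finite rational upper envelope for $\phi_1''$ exists at $x=1$; this is precisely why the paper's analogous computation for $\mu$ (Proposition~\ref{mudecreasing}) has to split off $[1,1.01]$ and replace $\bar\phi_{1,m}$ by the ad hoc bound $\phi_1'\ge \tfrac{7}{11x^3}$. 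Your fallback of taking a minimum over all sign cases of Lemma~\ref{triviallemma} produces finite but possibly negative rational lower bounds near $1$ whose positivity is not guaranteed by anything you have argued, and Lemma~\ref{triviallemma} itself requires one factor's lower envelope to be nonnegative, which fails here.

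The paper avoids all of this with one observation you are missing: by Propositions~\ref{omegaincrlem}/\ref{decxpn}, $\omega$ is nonnegative and increasing on $[1,\tau]$ with $\tau>1.732$, and $p_+^{l}$ is positive and increasing there, so the product $\omega p_+^{l}$ is automatically increasing on $[1,\tau]$ with no computation at all; the envelope-plus-\texttt{CountRoots} certification of $\rho=(l/2)\tilde\omega\tilde p'+\tilde\omega'\tilde p\ge 0$ is then only needed on $[1.732,5]$, where \texttt{CountRoots} confirms $\bar\phi_{1,m}>0$, $\tilde\phi_{1,m}<0$, $\phi_{2,m}''<0$ and the envelopes are well behaved. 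If you insert that reduction to $[\tau,5]$, the rest of your sign bookkeeping matches the paper's proof and is correct.
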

\begin{proof}
Note that since $\omega$ is increasing in the interval $[1,\tau]$ we automatically have that for all $l\ge 1$ the function $\omega(x)p_+^l$ increases on $[1,\tau]$. So now we have to verified that $\omega(x)p_+^l$ increases on $[1.732, 5]$.

Set
  $s_l(x)=\tilde \omega(x) \tilde p^{l/2}(x)$.  Then
  \begin{equation}\label{s'form}
  s_l'=\tilde p^{(l-2)/2}\big((l/2)\tilde\omega\tilde p' + \tilde \omega'\tilde p \big).
  \end{equation}
Let
\[
\rho =(l/2)\tilde\omega\tilde p' + \tilde \omega'\tilde p =l\tilde\omega (16\phi_1 \phi_1' +\phi_2 \phi_2') +  \tilde \omega' (16\phi_1^2+\phi_2^2)
\]
We need to prove $\rho \geq 0 $ in $[\tau,5]$. When $x$ is in $[1.732,5]$, by \texttt{CountRoots}, we have $\bar\phi_{1,m}>0, \tilde\phi_{1,m}<0, \phi_{2,m}''<0$. 
In view of Proposition \ref{basinvarphi} we deduce that 
\begin{eqnarray*}
0<\bar\phi_{1,m}(x)<\phi_1'(x), \quad 0<-\tilde\phi_{1,m}(x)<-\phi_1''(x)
 \textrm{ for } x\in[1.732,5].
\end{eqnarray*}
Furthermore, Proposition \ref{basinvarphi} yields
$\tilde\omega = \phi_2'\phi_1 -\phi_1'\phi_2 \geq \phi_{2,m}'\phi_{1,m} - \phi_{1,m}'\hat \phi_{2,m}$. Again, when $x$ in $[1.732,5]$, by \texttt{CountRoots}, $\phi_{2,m}'\phi_{1,m} - \phi_{1,m}'\hat \phi_{2,m}$ is positive. So we can bound from below the whole term $(l/2)\tilde\omega\tilde p'$ by $l(\phi_{2,m}'\phi_{1,m} - \phi_{1,m}'\hat \phi_{2,m})(16\phi_{1,m} \bar\phi_{1,m} +\phi_{2,m} \phi_{2,m}') $. This term is positive.

For the second term, if $\tilde \omega' \ge 0$, we have nothing to prove. So we can assume $\tilde \omega' <0$. Because $\tilde\omega' = \phi_2''\phi_1 -\phi_2\phi_1''$, and $-\phi_2\phi_1''$ is positive, so we can assume $\phi_2''$ is negative. 
Recall that  for $m=50$ and $x\in [1,5]$ we have the inequality $\phi_{2,m}''(x)<\phi_2''(x)$.  Hence for $x\in [1.732,5]$ we have the inequality $\phi_{2.m}''(x)<\phi_2''(x)<0$.  Therefore
\begin{eqnarray*}
\tilde \omega' (16\phi_1^2+\phi_2^2)>(\phi_{2,m}''\hat\phi_{1,m} -\phi_{2,m}\tilde\phi_{1,m}) (16\hat\phi_{1,m}^2+\hat\phi_{2,m}^2), \quad x\in[1.732,5], m=50.
\end{eqnarray*}
Finally, use \texttt{CountRoots} to test whether $l(\phi_{2,m}'\phi_{1,m} - \phi_{1,m}'\hat \phi_{2,m})(16\phi_{1,m} \bar\phi_{1,m} +\phi_{2,m} \phi_{2,m}') + (\phi_{2,m}''\hat\phi_{1,m} -\phi_{2,m}\tilde\phi_{1,m}) (16\hat\phi_{1,m}^2+\hat\phi_{2,m}^2)$ is positive in $[1.732,5]$. The answer is yes.
\end{proof}
  
\subsection{The function $\mu(x)$}\label{nufunc}
  In this section, $m = 40$. Recall the function 
  \[
\mu(x)=\frac{\psi_2'(x)\psi_2(x)+\psi_1'(x)\psi_1(x)}{\psi_2'(x)\psi_1(x) -\psi_1'(x)\psi_2(x)}.
\]

Note that
\begin{eqnarray*}
\frac{1}{4\mu(x)}=\frac{\phi_2'(x)\phi_1(x) -\phi_1'(x)\phi_2(x)}{\phi_2'(x)\phi_2(x)+16\phi_1'(x)\phi_1(x)}.
\end{eqnarray*}

Define $\nu(x)$ as
\begin{align*}
\big(\frac{1}{4\mu(x)}\big)'&=\frac{\nu(x)}{(\phi_2'(x)\phi_2(x)+16\phi_1'(x)\phi_1(x))^2},\\
\nu(x)&=(\phi_2'(x)\phi_1(x) -\phi_1'(x)\phi_2(x))'(\phi_2'(x)\phi_2(x)+16\phi_1'(x)\phi_1(x))-\\
&(\phi_2'(x)\phi_1(x) -\phi_1'(x)\phi_2(x))(\phi_2'(x)\phi_2(x)+16\phi_1'(x)\phi_1(x))'.
\end{align*}
Thus
\begin{align*}
\nu=\phi_2''\phi_1'(16\phi_1^2+\phi_2^2) -\phi_2'\phi_1''(16\phi_1^2+\phi_2^2) -\tilde\omega (16\phi_1'^2+\phi_2'^2).
\end{align*}

\begin{proposition}\label{mudecreasing}
The function $\nu(x)$ is positive in $[1, 1.732]$, so $\mu(x)$ is decreasing in this interval.
\end{proposition}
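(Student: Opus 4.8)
The plan is to reproduce the strategy used in the proofs of Propositions~\ref{omegaincrlem} and \ref{condompmincreas}: reduce the assertion $\nu(x)>0$ on $[1,1.732]$ to finitely many inequalities between \emph{explicit rational functions}, each of which is certified on $[1,1.732]$ by the exact root--counting routine \texttt{CountRoots}. Since
\[
\Bigl(\tfrac{1}{4\mu(x)}\Bigr)'=\frac{\nu(x)}{\bigl(\phi_2'(x)\phi_2(x)+16\phi_1'(x)\phi_1(x)\bigr)^2}
\]
and the denominator is strictly positive for $x\in(1,5]$ (all four factors are nonnegative, with $\phi_1'\phi_1>0$), the inequality $\nu>0$ forces $\tfrac{1}{4\mu}$ to be strictly increasing on $(1,1.732]$; because $\mu>0$ there (numerator $\psi_2'\psi_2+\psi_1'\psi_1>0$, denominator $\psi_2'\psi_1-\psi_1'\psi_2>0$ since $1.732<\tau$), this is equivalent to $\mu$ strictly decreasing, which is exactly the claim and the form of $\mu$ needed in Lemma~\ref{defq}.

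First I would rewrite $\nu$ in the grouped form
\[
\nu=(\phi_2''\phi_1'-\phi_2'\phi_1'')\,(16\phi_1^2+\phi_2^2)-\tilde\omega\,(16\phi_1'^2+\phi_2'^2),\qquad \tilde\omega=\phi_2'\phi_1-\phi_1'\phi_2,
\]
and bound each term below. For the subtracted term, Proposition~\ref{omegaincrlem} together with $\tilde\omega(1)=0$ gives $\tilde\omega\ge 0$ on $[1,1.732]$, so it suffices to bound $\tilde\omega$ and $16\phi_1'^2+\phi_2'^2$ \emph{above} by rational functions; this is done by inserting the one-sided bounds of Proposition~\ref{basinvarphi} (with $m=40$, so that in particular $\phi_{2,m}''<\phi_2''$ on $[1,5]$ is available) into Lemma~\ref{triviallemma}. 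For the first term, $\phi_2''\phi_1'-\phi_2'\phi_1''$ need not keep a fixed sign on $[1,1.732]$ --- indeed $\phi_1''$ tends to $+\infty$ at $x=1$ but is negative near $x=1.732$ --- so I would replace each of $\phi_1,\phi_2,\phi_1',\phi_2',\phi_1'',\phi_2''$ by its appropriate upper or lower rational bound from Proposition~\ref{basinvarphi} and take the pointwise minimum of the resulting finite family of rational functions, obtaining an explicit $R(x)$ with $R(x)\le\nu(x)$ on $[1,1.732]$. One then applies \texttt{CountRoots} to each rational function entering $R$ to check it has no root in $[1,1.732]$ and is positive at an interior test point; this proves $\nu>0$ there.

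The main obstacle is the bookkeeping of signs near $x=1$: several of the rational upper bounds in Proposition~\ref{basinvarphi} ($\hat\phi_{1,m},\tilde\phi_{1,m},\bar\phi_{1,m},\hat\phi_{2,m},\bar\phi_{2,m}$) carry a factor $\tfrac{1}{x^2-1}$ and blow up as $x\to 1^+$. Consequently one must either split $[1,1.732]$ into $[1,1+\delta]$ and $[1+\delta,1.732]$ and treat the first piece using the vanishing of $\phi_2,\phi_2'$ and $\tilde\omega$ at $x=1$ (so that the unbounded contributions are killed; in fact $\nu$ extends continuously to $x=1$ with $\nu(1)=16\,\phi_2''(1)\phi_1'(1)\phi_1(1)^2>0$ outright), or organize the grouping of factors so that every occurrence of a $\tfrac{1}{x^2-1}$-type bound multiplies a factor vanishing at $x=1$. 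Once the sign pattern is fixed on each piece the remaining work is purely mechanical and, since \texttt{CountRoots} is exact, fully rigorous; the value $m=40$ is chosen large enough to make all these root counts vanish.
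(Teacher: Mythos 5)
Your proposal follows essentially the same route as the paper: rewrite $\nu$ as the three products of $\phi_1,\phi_2$ and their derivatives, replace each factor by the one-sided rational bounds of Proposition~\ref{basinvarphi} via Lemma~\ref{triviallemma}, certify positivity of the resulting finitely many rational functions with \texttt{CountRoots}, and split off a small interval at the left endpoint to handle the blow-up of the truncated bounds as $x\to 1^+$. The paper's only concrete device you leave unspecified is its treatment of that left piece ($[1,1.01]$): it derives the uniform lower bound $\phi_1'(x)\ge \tfrac{7}{11x^3}$ directly from the integral representation of $\psi_1$, which plays the role of the ``factor vanishing at $x=1$'' bookkeeping you sketch.
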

\begin{proof}
We have to consider two different intervals: $[1.01, 1.732]$ and $[1, 1.01]$.

Assume that $x\in[1.01, 1.732]$. By \texttt{CountRoots}, $\bar\phi_{1,m}(x)>0$ in $[1.01, 1.732]$, so $\phi_1'(x)>\bar\phi_{1,m}(x)>0$ in $[1.01, 1.732]$. Recall Proposition \ref{basinvarphi} and Lemma \ref{triviallemma}, for $x\in [1.01,1.732]$ and $m=40$:
\begin{align*}
a_1&=\bar\phi_{1,m}(x)(16\phi_{1,m}^2(x)+\phi_{2,m}^2(x)), a=\phi_1'(x)(16\phi_1^2(x)+\phi_2^2(x)),\\ a_2&=\phi_{1,m}'(x)(16\hat\phi_{1,m}^2(x)+\hat\phi_{2,m}^2(x)),\\
b_1&=\phi_{2,m}''(x), b=\phi_2''(x), b_2=\bar\phi_{2,m}(x).
\end{align*}
Then we have 
\[
\phi_2''\phi_1'(16\phi_1^2+\phi_2^2) \geq \min\{ \phi_{2,m}''\bar\phi_{1,m}(16\phi_{1,m}^2+\phi_{2,m}^2), \phi_{2,m}''\phi_{1,m}'(16\hat\phi_{1,m}^2+\hat\phi_{2,m}^2) \}.
\]

For the second term, $-\phi_2'\phi_1''(16\phi_1^2+\phi_2^2)$, by Proposition \ref{basinvarphi}:
\begin{align*}
a_1&=\phi_{2,m}'(x)(16\phi_{1,m}^2(x)+\phi_{2,m}^2(x)), a=\phi_2'(x)(16\phi_1^2(x)+\phi_2^2(x)),\\ a_2&=\bar\phi_{2,m}(x)(16\hat\phi_{1,m}^2(x)+\hat\phi_{2,m}^2(x)),\\
b_1&=\phi_{1,m}''(x),b=\phi_1''(x), b_2=\tilde\phi_{1,m}(x).
\end{align*}
Use the inequality $ab\le \max\{a_1b_2, a_2 b_2\}$ in Lemma \ref{triviallemma} to deduce
\[
-\phi_2'\phi_1''(16\phi_1^2+\phi_2^2)\geq \min\{ -\bar \phi_{2,m}\tilde \phi_{1,m}(16\hat\phi_{1,m}^2+\hat\phi_{2,m}^2), -\phi_{2,m}'\tilde \phi_{1,m}(16\phi_{1,m}^2+\phi_{2,m}^2) \}.
\]

For the last term $-\tilde\omega (16\phi_1'^2+\phi_2'^2)$ we proceed as follows:  First recall that $\tilde \omega>0$ for $x>1$.  Use Proposition \ref{basinvarphi} and Lemma \ref{triviallemma} to deduce $\tilde\omega_2(x)<\bar\phi_{2,m}\hat\phi_{1,m}(x)-\phi_{2,m}\bar\phi_{1,m}(x)$ for $x>1$. Hence 
\begin{eqnarray*}
\tilde \omega(x)(16\phi_1'^2(x)+\phi_2'^2(x))<(\bar\phi_{2,m}\hat\phi_{1,m}(x)-\phi_{2,m}\bar\phi_{1,m}(x))((\phi_{1,m}'(x))^2+(\bar\phi_{2,m}(x))^2)\Rightarrow\\
-\tilde \omega(x)(16\phi_1'^2(x)+\phi_2'^2(x))>-(\bar\phi_{2,m}\hat\phi_{1,m}(x)-\phi_{2,m}\bar\phi_{1,m}(x))((\phi_{1,m}'(x))^2+(\bar\phi_{2,m}(x))^2)
\end{eqnarray*}

There are $2\times 2 =4$ cases, in each case, their sum is positive in $[1.01, 1.732]$. So $\nu(x)$ is positive when $x\in[1.01, 1.732]$.

Assume that $x\in[1, 1.01]$. For $\phi_1'(x)$, we need a better lower bound than $\bar\phi_{1,m}(x)$ because when $x \to 1$, $\bar\phi_{1,m}(x)$ diverges to $-\infty$ while $\phi_1'(x)$ remains finite.

Recall the definition:
\[
\phi_1(x) = \frac{4}{3 \pi}\psi_1(x)=\frac{2}{\pi}\int_0^{\frac{\pi}{2}} (1-x^{-2}\sin^2 u)^{\frac{3}{2}}\,du.
\]
Take the derivative and note that $\pi < 22/7$,
\begin{align*}
\phi_1(x) &=\frac{6}{\pi}x^{-3} \int_0^{\frac{\pi}{2}}\sin^2 u (1-x^{-2}\sin^2 u)^{\frac{1}{2}}\,du \\
& \geq \frac{6}{\pi}x^{-3} \int_0^{\frac{\pi}{2}}\sin^2 u (1-\sin^2 u)^{\frac{1}{2}}\,du \\
&=  \frac{6}{\pi}x^{-3} \int_0^{\frac{\pi}{2}}\sin^2 u \cos u\,du = \frac{2}{\pi x^3} \geq \frac{7}{11 x^3}
\end{align*}

For the first term, Proposition  \ref{basinvarphi} yields
 $\phi_2''(x) \geq \phi_{2,m}''(x), \phi_1''(x) \geq \phi_{1,m}''(x) $. By \texttt{CountRoots}, $\phi_{2,m}''(x) >0, \phi_{1,m}''(x) >0$.  Hence 
\[ \phi_2''\phi_1'(16\phi_1^2+\phi_2^2) \geq  \phi_{2,m}''\frac{7}{11 x^{3}}(16\phi_{1,m}^2+\phi_{2,m}^2).
\]

For the second term, because $\phi_1, \phi_2$ are increasing functions, we can bound them above by value at $1.01$. So the whole term can be bounded below using Proposition \ref{basinvarphi}  by $-\bar \phi_{2,m}\tilde \phi_{1,m}(16\hat\phi_{1,m}(1.01)^2+\hat\phi_{2,m}(1.01)^2)$. 

For the last term, we use the same bound as for the interval $[1.01, 1.732]$. Finaly, the sum of three terms is positive in $[1, 1.01]$ using \texttt{CountRoots}. So $\nu(x)$ is positive when $x\in[1, 1.01]$.
\end{proof}

\end{appendix}
\section*{Acknowledgment}
The work of the first author is partially supported by the Simons Collaboration Grant for Mathematicians.
The work of the second author is partially supported by by the Neubauer Family Distinguished Doctoral Fellowship from the University of Chicago.
The work of the third author is partially supported by NSF IIS 1546413 and the Eckhardt Faculty Fund.

\end{document}